\title{On K-stability of Calabi-Yau fibrations}
\author{Masafumi Hattori}
\address{Department of Mathematics, Faculty of Science, Kyoto University, Kyoto, 606-8502, Japan}
\email{hattori.masafumi.47z@st.kyoto-u.ac.jp}
\newtheorem{thm}{Theorem}[section]
\newtheorem{lem}[thm]{Lemma}
\newtheorem{prop}[thm]{Proposition}
\newtheorem*{claim}{Claim}
\newtheorem{cor}[thm]{Corollary}
\newtheorem{teiri}{Theorem}[section]
\newtheorem{kei}[teiri]{Corollary}
\theoremstyle{definition}
\newtheorem{de}[thm]{Definition}
\newtheorem{ex}[thm]{Example}
\newtheorem{conj}[thm]{Conjecture}
\newtheorem{rem}[thm]{Remark}
\newtheorem{ntt}[thm]{Notation}
\begin{document}
\maketitle
\begin{abstract}
We show that Calabi-Yau fibrations over curves are uniformly K-stable in an adiabatic sense if and only if the base curves are K-stable in the log-twisted sense. Moreover, we prove that there are cscK metrics for such fibrations when the total spaces are smooth.
\end{abstract}
\section{Introduction}
\subsection{K-stability and its criteria}
To find an explicit and useful criterion for K-stability of polarized varieties is meaningful for both of differential geometry and algebraic geometry. 
Originally, K-stability, which is a purely algebro-geometric condition,  was introduced by Tian \cite{T4} for Fano manifolds and reformulated by Donaldson \cite{Dn2} for all polarized varietiers by using the Donaldson-Futaki invariants of test configurations. 
At the same time, the Yau-Tian-Donaldson (YTD) conjecture was introduced.
This conjecture predicts that the existence of constant scalar curvature K\"{a}hler (for short, cscK) metrics is equivalent to K-polystability. 
In the Fano case, the conjecture was solved by Chen-Donaldson-Sun \cite{CDS} and Tian \cite{T2} independently.
Recently, K.~Zhang \cite{Z} shows the YTD conjecture for other kinds of general polarized manifolds.
In this respect, the problem to detect K-stability for a concrete polarized variety is meaningful for differential geometry.

In algebraic geometry, K-stability is closely related to birational geometry and moduli theory.
Odaka \cite{GIToda} finds that K-stability is closely related not only to the geometric invariant theory (GIT cf.~\cite{GIT}), but also to (semi) log canonical singularities.
Odaka \cite{O2} also finds that all semi log canonical models and log Calabi-Yau klt pairs are K-stable.
He predicts the K-moduli conjecture, which states that there exists a quasi-projective moduli scheme parametrizing all K-polystable varieties with fixed some numerical data (for example, the dimension of varieties) in \cite{O5}.
This conjecture is motivated by the works on KSBA moduli, which parametrizes all semi log canonical models  (see \cite{kollar-moduli} for details) and the partial result of Fujiki-Schumacher \cite{FS} on projectivity of the moduli space of certain cscK manifolds.
For log Fano pairs, to detect K-stability is more complicated than the classes treated in \cite{O2} but
the valuative criterion 
is established by Fujita \cite{Fjt} and Li \cite{Li3}.
Later Fujita and Odaka \cite{FO} introduced the $\delta$-invariant and Blum and Jonsson \cite{BlJ} prove that this invariant completely detects K-semistability of log Fano pairs.
The 
$\delta$-invariant is one of the key ingredients to 
the remarkable progress in K-stability of log Fano pairs in this decade. 
For example,  the K-moduli space (cf.~\cite{ABHLX}, \cite{LXZ}), which parametrizes all K-polystable log Fano pairs is constructed as a projective scheme.
It is natural to expect that if there was a more explicit and useful criterion for K-stability for other kinds of polarized varieties, we could construct moduli spaces of K-polystable polarized varieties. 
However, there are few such useful criteria found for K-stability of other polarized varieties.

\subsection{K-stability and the cscK problem on fibrations}
Adiabatic K-stability, which is first introduced in \cite{Hat2}, is the following notion.
\begin{de}
    Let $f\colon(X,\Delta,H)\to (C,L)$ be a polarized algebraic fiber space pair.
    If $(X,\Delta,\epsilon H+f^*L)$ is K-stable for any sufficiently small $\epsilon>0$, we say that $(X,\Delta,H)$ is adiabatically K-stable over $(C,L)$. See Definition \ref{algfibk} for details.
\end{de}
In K\"{a}hler geometry, the existence problem of cscK metrics on fibrations in the adiabatic sense is studied by \cite{J2}, \cite{Fine}, \cite{JSS}, \cite{DS2} and \cite{Ort}. 
The author introduces adiabatic K-stability to consider the algebro-geometric counterpart of these results.
Indeed, algebraic fiber spaces treated in \cite{J2}, \cite{Fine}, \cite{JSS}, \cite{DS2} and \cite{Ort} are adiabatically K-(poly)stable (see \cite{BDL}).
Fine \cite{J2}, \cite{Fine}, Dervan-Sektnan \cite{DS2} study the case when all fibers have cscK metrics.
Ortu generalizes the result of \cite{DS2} to K-semistable fibrations later. 
In particular, they only deal with fibrations without singular fibers. 
On the other hand, Jian-Shi-Song \cite{JSS} show the existence of a cscK metric on a klt--trivial fibration $f:X\to B$ when $K_X\sim_{\mathbb{Q}}f^*L_0$, $L_0$ is ample and $f$ might have a singular fiber. 
Here, we call an algebraic fiber space $f\colon X\to B$ a {\it klt--trivial} (resp.~{\it lc--trivial}) fibration if $K_X\sim_{\mathbb{Q},B}0$ and $X$ is klt (resp.~lc). 
Later, Sj\"{o}str\"{o}m Dyrefelt \cite{Sj} and Song \cite{S} show that smooth minimal models also have cscK metrics for certain polarizations
.
Conversely, Dervan-Ross (cf.~\cite[Corollary 4.4]{DR2}) show for any polarized algebraic fiber space that adiabatic K-semistability of the total space implies twisted K-semistability of the base in the sense of \cite{De2}.

 To klt--trivial fibrations with a singular fiber and $K_X$ not nef, we cannot directly apply these results on the existence of cscK metrics in the previous paragraph. 
 For example, rational elliptic surfaces have at least one singular fiber and $K_X$ is not nef. 
 Furthermore, the base of a rational elliptic surface is necessarily twisted K-stable (see Remark \ref{rem-final}).
 Hence, we cannot directly apply \cite[Corollary 4.4]{DR2} either.
On the other hand, Miranda constructs two moduli spaces of rational elliptic surfaces by applying GIT in \cite{Mi2} and \cite{Mi}.
In the two results, Miranda realizes that GIT-stability of a rational elliptic surface $X$ depends on what fiber $X$ has, and he further conjectured in \cite{Mi3} that asymptotic Chow-stability of rational Weierstrass fibrations would be decided by their fibers similarly to the results of \cite{Mi2} and \cite{Mi}. 
 \begin{conj}[Miranda, {\cite[Conjecture 2.3]{Mi3}}]\label{conj--miranda--83}
 Let $W$ be a rational Weierstrass fibration, $X$ be its minimal resolution and $H$ be an arbitrary ample line bundle on $W$. Then, $(W,H)$ is 
\begin{enumerate}[(a)]
\item asymptotically Chow-stable if $X$ has only reduced fibers,
\item asymptotically Chow-semistable if $X$ has only reduced and $I_N^*$-type fibers for $N\ge0$,
\item asymptotically Chow-unstable if $X$ has at least one $II^*$, $III^*$ or $IV^*$-type fiber.
\end{enumerate}
\end{conj}
Indeed, Miranda shows that (c) holds for some ample line bundles.
If Miranda's conjecture would hold, then a rational Weierstrass fibration $W$ whose minimal resolution  $X$ has at least one $II^*$, $III^*$ or $IV^*$-type fiber would be adiabatically K-unstable and \cite[Corollary 4.4]{DR2} would not be optimal to detect adiabatic K-unstability of lc--trivial fibrations.

\begin{rem}
    We showed in the first version of this paper \cite{Hatpre} that Conjecture \ref{conj--miranda--83} (c) holds when the Picard number of $W$ is two.
    Furthermore, we show that if $X$ has at least one $I_{N}^*$-type fiber for some $N\ge1$, then $(W,H)$ is K-unstable for some ample line bundle $H$. 
    Therefore, Conjecture \ref{conj--miranda--83} (b) does not hold in general.
    We would write them in a forthcoming paper.
\end{rem}

\subsection{Main results}

The main purpose of the paper is to construct an explicit and useful criterion for adiabatic K-stability of klt--trivial fibrations over curves.
To be precise, we introduce the notion of {\it uniform adiabatic K-stability} (cf.~Definition \ref{algfibk}) and {\it log--twisted K-stability} (see Definition \ref{deflogtwisted}) of a log-twisted pair. 
The latter is the logarithmic version of the twisted K-stability introduced by Dervan \cite{De2}.
We note that for any klt-trivial fibration, there are natural two divisors that are called the moduli and discriminant divisors on the base (cf.~\cite{A}, \S\ref{subsec-lc-triv}).
Then the following is our main result.

\begin{teiri}\label{dd}
Let $f:(X,\Delta,H)\to (C,L)$ be a polarized klt-trivial fibration such that $C$ is a smooth curve. Let $M$ be the moduli divisor and $B$ the discriminant divisor on $C$. 

Then, $(X,\Delta,H)$ is uniformly adiabatically K-stable over $(C,L)$ if and only if $(C,B,M,L)$ is log-twisted K-stable. 
Moreover, if $\Delta=0$, $X$ is smooth and $(C,B,M,L)$ is log-twisted K-stable, then $(X,\epsilon H+L)$ has a cscK metric for any sufficiently small $\epsilon\in\mathbb{Q}_{>0}$.
\end{teiri}
See Definition \ref{ambro model} for the discriminant and moduli divisors.
Theorem \ref{dd} is useful to detect uniform adiabatic K-stability and the existence of cscK metrics since it is easy to check the log-twisted K-stability of curves (see Corollary \ref{appc}). 
We emphasize that there exists a uniformly adiabatically K-stable klt--trivial fibration over a curve with a singular and K-unstable fiber. 
We note that Theorem \ref{dd} is also a key ingredient to construct the moduli space in \cite{HH}.

To show Theorem \ref{dd}, there are three key ingredients, Theorems \ref{thm--first--ingredient}, \ref{ff} and \ref{ee}. 
Theorem \ref{ff}, which is the first key ingredient, states that adiabatic K-semistability of a lc--trivial fibration puts more restrictions on the base space than \cite[Corollary 4.4]{DR2}.
We emphasize that we do not assume $C$ to be a curve in the following theorem.

\begin{teiri}[Theorems \ref{can}, \ref{stp1}]\label{ff}
Let $f:(X,\Delta,H)\to (C,L)$ be a polarized lc-trivial fibration.
If $B_C$ is the discriminant divisor, the moduli divisor $M_C$ is $\mathbb{Q}$-Cartier and $(X,\Delta,H)$ is adiabatically K-semistable, then $(C,B_C,M_C,L)$ is log-twisted K-semistable.

Furthermore, if $(C,B_C,M_C,L)$ is a log-twisted Fano pair and $C$ is an Ambro model (cf.~Definition \ref{de-ambro}), and $(X,\Delta,H)$ is uniformly adiabatically K-stable, then $(C,B_C,M_C,L)$ is also K-stable.
\end{teiri} 

We note that if $C$ is a smooth curve, then $C$ is an Ambro model.
Thus, we obtain one implication of Theorem \ref{dd} by Theorem \ref{ff}.
On the other hand, we deduce the opposite implication from Theorems \ref{thm--first--ingredient} and \ref{ee}.

For the second ingredient, we need the notion of J-stability.
J-stability is a variant of K-stability, which is irrelevant to singularities.
G.~Chen \cite{C} found a useful criterion for J-stability without using test configurations (see also \cite{DP}, \cite{S} and \cite{Hat}).
We extend this criterion to the singular case (see Theorem \ref{thm-jst}).
For polarized klt pairs, we also show Theorem \ref{bhjz} stating that 
\[
H^{\mathrm{NA}}_{\Delta}(\mathcal{X},\mathcal{L})\ge\delta_{(X,\Delta)}(L)(I^{\mathrm{NA}}(\mathcal{X},\mathcal{L})-J^{\mathrm{NA}}(\mathcal{X},\mathcal{L}))=(\mathcal{J}^{\delta_{(X,\Delta)}(L)L})^{\mathrm{NA}}(\mathcal{X},\mathcal{L}).
\]
We note that Theorem \ref{bhjz} had been solved for the smooth case in \cite{BJ2} (see also \cite{Z} for the analytic case).
Combining Theorem \ref{bhjz} with the criterion for J-stability (Theorem \ref{thm-jst}), we deduce the following sufficient condition for uniform K-stability.

\begin{teiri}\label{thm--first--ingredient}
Let $(X,\Delta,L)$ be a polarized klt pair and set $H=\delta_{(X,\Delta)}(L)L+K_X+\Delta$.
    If $H$ is ample and there exists $\epsilon>0$ such that
    \begin{equation}\nonumber
   \left(n\frac{H\cdot L^{n-1}}{L^n}L-pH\right)\cdot L^{p-1}\cdot V\ge \epsilon(n-p)L^p\cdot V
   \end{equation}
    for any subvariety $V\subset X$ of dimension $1\le p\le n-1$, then $(X,\Delta,L)$ is uniformly K-stable.
    \end{teiri}

Now, we want to analyze the $\delta$-invariant $\delta_{(X,\Delta)}(\epsilon H+L)$ for sufficiently small $\epsilon>0$ and apply Theorem \ref{thm--first--ingredient}. 
Theorem \ref{ee}, which is the third key ingredient, states that $\delta_{(X,\Delta)}(\epsilon H+L)$ converges to the $\delta$-invariant of the base of a polarized algebraic fiber space over a curve
.

\begin{teiri}[Theorems \ref{ctn}, \ref{ctnd}]\label{ee}
Let $f:(X,\Delta,H)\to (C,L)$ be a polarized algebraic fiber space pair. Suppose that $(X,\Delta)$ is a klt pair and $C$ is a smooth curve.
Let $B$ be a $\mathbb{Q}$-divisor defined by $\sum_{p\in C}(1-\mathrm{lct}_{(X,\Delta)}(f^{-1}(p)))p$ on $C$. 
Then 
\begin{align*}
\lim_{\epsilon\to 0}\alpha_{(X,\Delta)}(\epsilon H+f^*L)&=\alpha_{(C,B)}(L),\quad\text{and}\\
\lim_{\epsilon\to 0}\delta_{(X,\Delta)}(\epsilon H+f^*L)&=\delta_{(C,B)}(L).
\end{align*} 
\end{teiri} 
We briefly explain the idea to show the K-stability of the base implies uniform adiabatic K-stability when $-(K_X+\Delta)\not\equiv0$ is nef.
We note that the base is a log-twisted K-stable Fano pair in this case and then the $\delta$-invariant of the base is larger than one. 
Theorem \ref{ee} shows that the $\delta$-invariant of the total space is also larger than one.
Then we can make use of Theorem \ref{thm--first--ingredient} to deduce uniform adiabatic K-stability.
Furthermore, we can show that K-energy is coercive in the same way and deduce the existence of cscK metrics from \cite{Ch}.

As an application of Theorem \ref{dd} to rational elliptic surfaces, we obtain the following.

\begin{kei}[Corollary \ref{K-fib}]\label{hh}
Let $X$ be a smooth rational elliptic surface and $H$ be an ample line bundle on $X$. Set $m\ge 1$ as the index of $X$ (cf.~Definition \ref{de-mult}). Then the following hold.
\begin{enumerate}
\item If $m=1$ and $X$ has at most reduced fibers, then $(X,H)$ is uniformly adiabatically K-stable and has cscK metrics.
\item If $m=2$ and $X$ has no $II^*$ or $III^*$-type fiber, then $(X,H)$ is uniformly adiabatically K-stable and has cscK metrics.
\item If $m=3$ and $X$ has no $II^*$-type fiber, then $(X,H)$ is uniformly adiabatically K-stable and has cscK metrics.
\item If $m\ge 4$, then $(X,H)$ is uniformly adiabatically K-stable and has cscK metrics.
\end{enumerate}
 On the other hand,
\begin{enumerate}[(i)]
\item If $m=1$ and $X$ admits one of $II^*$, $III^*$ or $IV^*$-type fibers, then $(X,H)$ is adiabatically K-unstable over $\mathbb{P}^1$.
\item If $m=2$ and $X$ admits a $II^*$-type fiber, then $(X,H)$ is adiabatically K-unstable over $\mathbb{P}^1$.
\end{enumerate}
\end{kei}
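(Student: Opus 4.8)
The plan is to reduce everything to Theorem \ref{dd} and Theorem \ref{ff} after computing the canonical bundle formula of $f\colon X\to\mathbb{P}^1$. First I would note that a relatively minimal rational elliptic surface is a klt-trivial fibration over $\mathbb{P}^1$: since $-mK_X$ (and $-K_X$ when $m=1$) is the class of a fibre, $K_X\sim_{\mathbb{Q}}f^*\!\big(\tfrac{-1}{m}[\mathrm{pt}]\big)$, so $(X,0)$ is klt, $X$ is smooth, $K_X\sim_{\mathbb{P}^1,\mathbb{Q}}0$, and $S$ is relatively ample; as $\operatorname{Pic}(\mathbb{P}^1)=\mathbb{Z}$ and K-stability is scale invariant, the answer will not depend on the choice of relatively ample $S$ nor on the normalization of the base polarization. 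Writing the canonical bundle formula $K_X=f^*(K_{\mathbb{P}^1}+B+M)$ of Definition \ref{ambro model}, one gets $\deg(B+M)=2-\tfrac1m$, so $L\coloneq-(K_{\mathbb{P}^1}+B+M)$ is ample of degree $\tfrac1m$; and because $X$ is smooth, the coefficient $b_P$ of the discriminant $B$ at a point $P$ is, directly from its definition, $1-\tfrac1{\mu_P}$, where $\mu_P$ is the largest multiplicity of a component of the fibre $f^{-1}(P)$. Reading these off Kodaira's table of singular fibres gives $b_P=0$ for types $I_n,II,III,IV$, $b_P=\tfrac12$ for $I_n^*$, $b_P=\tfrac23$ for $IV^*$, $b_P=\tfrac34$ for $III^*$, $b_P=\tfrac56$ for $II^*$, and $b_{p_0}=\tfrac{m-1}{m}$ at the unique multiple fibre.

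Next I would invoke Theorem \ref{dd}: since $(X,0)$ is klt, $X$ smooth, $K_X\sim_{\mathbb{P}^1,\mathbb{Q}}0$ and $S$ relatively ample, $(X,S)$ is uniformly adiabatically K-stable over $(\mathbb{P}^1,L)$ --- and then carries a cscK metric in $\epsilon S+f^*L$ for $0<\epsilon\ll1$ --- if and only if $(\mathbb{P}^1,B,M,L)$ is log-twisted K-stable. For the instability statements I would instead use Theorem \ref{ff}: adiabatic K-semistability of $(X,0,S)$ forces log-twisted K-semistability of $(\mathbb{P}^1,B,M,L)$, so if the base is not log-twisted K-semistable then $(X,S)$ is adiabatically K-unstable. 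The corollary is therefore equivalent to deciding the log-twisted K-stability of $(\mathbb{P}^1,B,M,L)$.

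This last step is a one-variable computation. By the valuative criterion for log-twisted K-stability of Fano-type pairs (Definition \ref{deflogtwisted} and the Appendix), as $L=-(K_{\mathbb{P}^1}+B+M)$ is ample the pair $(\mathbb{P}^1,B,M,L)$ is log-twisted K-stable, resp.\ log-twisted K-unstable, exactly when $A_{(\mathbb{P}^1,B)}(v)>S_L(v)$ for every divisorial valuation $v$ over $\mathbb{P}^1$, resp.\ when $A_{(\mathbb{P}^1,B)}(v)<S_L(v)$ for some $v$ --- the moduli part $M$ entering only through the ample class $L$. Since any proper birational model of the smooth curve $\mathbb{P}^1$ is $\mathbb{P}^1$ itself, the only divisorial valuations are the $\operatorname{ord}_P$ with $P\in\mathbb{P}^1$, and $A_{(\mathbb{P}^1,B)}(\operatorname{ord}_P)=1-b_P$, $S_L(\operatorname{ord}_P)=\tfrac1{\deg L}\int_0^{\deg L}(\deg L-t)\,dt=\tfrac12\deg L=\tfrac1{2m}$. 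Hence $(\mathbb{P}^1,B,M,L)$ is log-twisted K-stable iff $b_P<1-\tfrac1{2m}$ for all $P$, and log-twisted K-unstable iff $b_P>1-\tfrac1{2m}$ for some $P$. Now $\tfrac{m-1}{m}<1-\tfrac1{2m}$ always, so the multiple fibre never obstructs, while a singular fibre of type $T$ obstructs precisely when $b_T\ge1-\tfrac1{2m}$; running $m=1,2,3,\ge4$ against the thresholds $1-\tfrac1{2m}=\tfrac12,\tfrac34,\tfrac56,\ge\tfrac78$ yields exactly the equivalences in (1)--(4) and (i),(ii) (for instance, for $m=2$ the type $III^*$ gives only equality, so it is log-twisted K-semistable but not K-stable: this is why it is excluded from the ``uniformly K-stable'' list (2) yet does not appear in the ``K-unstable'' list (ii)). The cscK assertions follow from the last clause of Theorem \ref{dd}.

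I expect the main difficulty to lie in the first paragraph: justifying $b_P=1-1/\mu_P$ from the definition of the discriminant using smoothness of $X$ (so that only the generic multiplicities of fibre components, and not the singularities of the fibres, enter), fixing the normalization $\deg(B+M)=2-1/m$ from $-mK_X$ being the fibre class, and --- the most delicate point --- checking that in the log-twisted K-stability of the base the moduli part $M$ really contributes only through $L=-(K_{\mathbb{P}^1}+B+M)$ and not through the log-discrepancy side of the valuative criterion. Granting these, the rest is bookkeeping with Kodaira's fibre table.
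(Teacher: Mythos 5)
Your overall strategy is exactly the paper's: compute the canonical bundle formula of $f\colon X\to\mathbb{P}^1$, reduce uniform adiabatic K-stability to log-twisted K-stability of $(\mathbb{P}^1,B,M,L)$ via Theorem \ref{ctn3}/Corollary \ref{chikichiki} (= Theorem \ref{dd}), get the instability statements from Theorem \ref{can} (= Theorem \ref{ff}), and settle the base by the one-dimensional valuative criterion (Corollary \ref{appc}, i.e.\ $\max_Q b_Q$ versus $\tfrac{2m-1}{2m}$). The threshold bookkeeping, the observation that the multiple fibre never destabilizes, and the remark that $I_N^*$ (resp.\ $III^*$ for $m=2$, $II^*$ for $m=3$) lands exactly on the threshold and hence appears in neither list, all match the paper.

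There is, however, one concrete error, and it sits precisely at the point you flagged as the main difficulty. The discriminant coefficient is \emph{not} $b_P=1-1/\mu_P$ with $\mu_P$ the maximal multiplicity of a component of $f^{-1}(P)$; by Definition \ref{ambro model} it is $b_P=1-\mathrm{lct}_X(f^{-1}(P))$, and the log canonical threshold of a reduced fibre does see the singularities of the fibre, not only the generic multiplicities of its components. For a type $II$ fibre (cuspidal rational curve) one has $\mathrm{lct}_X(F)=\tfrac{5}{6}$ (locally $\mathrm{lct}(y^2-x^3)=\tfrac56$), so $b_P=\tfrac16$, not $0$; similarly type $III$ gives $b_P=\tfrac14$ and type $IV$ gives $b_P=\tfrac13$. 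Your formula happens to agree with the correct values for the starred fibres, the $I_N$ fibres and the multiple fibre, which is why your final classification is still right: the erroneous entries $II,III,IV$ have true coefficients $\tfrac16,\tfrac14,\tfrac13$, all strictly below every threshold $1-\tfrac1{2m}\ge\tfrac12$, so they never obstruct. But as written the justification of the key table is wrong, and in a situation with a smaller threshold it would give the wrong answer. The fix is simply to replace the multiplicity heuristic by Kodaira's list of log canonical thresholds of singular fibres (as the paper records at the start of \S5.1) and plug those into $b_P=1-\mathrm{lct}_X(f^{-1}(P))$.
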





\subsection*{Structure of this paper and overview of proof}
In Section \ref{Notat}, we will give many terminologies and basic results on K-stability and birational geometry.
In this section, we also introduce uniform adiabatic K-stability and log-twisted K-stability.

In Section \ref{subsec-j-st}, we partially extend \cite[Theorem 1.1]{C} and show Theorem \ref{thm--first--ingredient}, which is one of the key ingredients to deduce our main theorems.
We also deal with K-stability of polarized klt minimal models.
To show uniform K-stability of polarized klt minimal models for fixed certain polarizations, we make use of the result on J-stability for singular varieties as \cite{Sj} and \cite{S} did.

In Section \ref{sec-3}, we consider the relationship between the adiabatic K-stability of Calabi-Yau fibrations and the log-twisted K-stability of the base and discuss the proof of Theorem \ref{dd}.
In Subsection \ref{unstadiab}, we prove Theorem \ref{ff}.
We explain how to show Theorem \ref{ff} briefly here.
We compare the log-twisted Donaldson-Futaki invariant of a test configuration $(\mathscr{C},\mathcal{L})$ for the base variety with the Donaldson-Futaki invariant of the test configuration $(\mathcal{X},H_{\mathbb{P}^1}+m\mathcal{L})$ induced by the base change by $X\to C$ as the proof of \cite[Corollary 4.4]{DR2}, where $H$ is an ample line bundle on $X$.
We note that the $\mathrm{DF}(\mathcal{X},H_{\mathbb{P}^1}+m\mathcal{L})$ is a polynomial of $m$ with leading coefficient the twisted Donaldson-Futaki invariant of $(\mathscr{C},\mathcal{L})$ (see the proof of \cite[Corollary 4.4]{DR2}) and bigger than the log-twisted Donaldson-Futaki invariant. 
Then, we run a minimal model program of $\mathcal{X}$ over $\mathscr{C}$ and compare the canonical divisor of the model of $\mathcal{X}$ with the canonical divisor of $\mathscr{C}$ by the canonical bundle formula introduced by \cite{A} and \cite{FG}.
Conversely, we show Theorem \ref{ee} in Subsection \ref{Weier} and obtain that log-twisted K-stability of the base Fano curve implies adiabatic K-stability of the total space of a klt-trivial fibration, which shows Theorem \ref{dd}.
To show Theorem \ref{ee} for the $\alpha$-invariant, we estimate the multiplicities of the horizontal part at any point and of the vertical part along irreducible components of fibers of any $\mathbb{Q}$-divisor $\mathbb{Q}$-linearly equivalent to $\epsilon H+L$ when $\epsilon$ is so small.
For the $\delta$-invariant, we deduce Theorem \ref{ee} from the assertion for the $\alpha$-invariant.


In Appendix \ref{appendices}, we discuss the fundamental results on log-twisted K-stability. 
In Subsection \ref{sec-logtwisteddelta}, we show Theorem \ref{bhjz}, which is the key ingredient to show Theorem \ref{dd}.
We consider log-twisted Ding stability to deduce this theorem.
We remark that  we cannot apply the main result of \cite{LX} directly in the proof of Theorem \ref{dd} since we have to treat when the twist term $T$ is not nef.
On the other hand, in Subsection \ref{sec-stp1}, we deal with the case when $(X,B,T,L)$ is log-twisted Fano and $T$ is semiample and discuss its K-stability.
This is one of the main 
ingredients to show Theorem \ref{ff}.


\subsection*{Acknowledgements}
The author is grateful to his research advisor Professor Yuji Odaka for a lot of suggestive advices and productive discussions. The author would like to thank Odaka and Doctor Keita Goto for careful reading his draft. He is grateful to Doctor Aline Zanardini for pointing out typos in an earlier version. The author is also grateful to Professor Yoshinori Hashimoto for giving him a lecture on the paper \cite{Z}. The author would like to thank Professor Kenta Hashizume and Doctor Eiji Inoue for fruitful discussions and helpful advices. 
This work is a part of his master thesis and partially 
supported by JSPS KAKENHI 	22J20059  
(Grant-in-Aid for JSPS Fellows DC1).

\section{Preliminaries}
\label{Notat}
In this paper, we work over the complex number field $\mathbb{C}$.
We call a pair $(X,L)$ a {\it polarized variety} if $X$ is a proper normal variety over $\mathbb{C}$ and $L$ is an ample $\mathbb{Q}$-line bundle  on $X$.
Let $D$ be a $\mathbb{Q}$-Weil divisor, i.e.~$D=\sum_{i=1}^ka_iD_i $ for some $a_i\in\mathbb{Q}_{\ne0}$ and distinct prime divisors $D_i$.
We set $D_{>0}:=\sum_{i\colon a_i>0}a_iD_i$,  $D_{<0}:=D_{>0}-D$, $\lfloor D\rfloor:=\sum \lfloor a_i\rfloor D_i$, $\lceil D\rceil:=\sum \lceil a_i\rceil D_i$ and $D_{=1}:=\sum_{i\colon a_i=1}a_iD_i$.
On the other hand, set $$\mathrm{red}\,D:=\sum_{i}D_i$$ and call this the {\it reduced structure} of $D$.
Let $f\colon X\to Y$ be a proper surjective morphism of normal varieties and $D$ be a $\mathbb{Q}$-divisor on $X$. 
Let $D=\sum_{i=1}^k a_i D_i$ be the irreducible decomposition.
Suppose that there exists $1\le l\le k$ such that $D_i$ for any $i\le l$ is dominant to $Y$ but $D_i$ is not so for any $i>l$.
Then we set $D_{\mathrm{hor}}=\sum_{i\le l}a_iD_i$ (resp.~$D_{\mathrm{vert}}=\sum_{i> l}a_iD_i$) and call this the {\it horizontal} (resp.~{\it vertical}) part of $D$.
Let $U\subset Y$ be an open subset.
If $f|_{f^{-1}(U)}$ is smooth and any stratum of $D|_{f^{-1}(U)}$ is smooth over $U$, then we say that $(X,D)$ is (relatively) {\it log-smooth} over $U$.
In the case when $Y=U=\mathrm{Spec}\,\mathbb{C}$, we say that $(X,D)$ is {\it log-smooth}.
In this case, we call $D$ a {\it simple normal crossing} (for short, snc) divisor.

\subsection{Birational geometry and MMP}
First, we recall the fundamental concepts of birational geometry.
\begin{de}[Log pair]
Let $X$ and $S$ be quasi projective normal varieties such that there exists a projective surjective morphism $f\colon X\to S$. 
Let $\Delta$ be a $\mathbb{Q}$-Weil divisor on $X$ such that $ K_X+\Delta$ is $\mathbb{Q}$-Cartier.
Then we call $(X,\Delta)$ a {\it log subpair} over $S$. 
If $\Delta$ is further effective
, then we call $(X,\Delta)$ a {\it log pair}
.
For any subpair $(X,\Delta)$, we take {\it a log resolution} $\sigma\colon Y\to X$ such that $\sigma$ is a projective birational morphism from a smooth variety $Y$ such that $\mathrm{Ex}(\sigma)+\sigma^{-1}_{*}\Delta$ is snc, where $\mathrm{Ex}(\sigma)$ is the exceptional locus.  
If an algebraic group $G$ acts on $X$, then there exists a log resolution $\sigma$ such that $\sigma$ is $G$-equivariant (cf.~\cite{Ko2}) and we call such $\sigma$ a $G$-equivariant log resolution.
We call $F$ a {\it prime divisor over} $X$ if there exists a log resolution $\sigma:Y\to X$ of $(X,\Delta)$ such that $F$ is a prime divisor on $Y$. 
A valuation $v$ is called {\it divisorial} if there exist a prime divisor $F$ over $X$ and $c\in\mathbb{Q}_{>0}$ such that $v=c\,\mathrm{ord}_F$.
Then we set the {\it log discrepancy} of $(X,\Delta)$ with respect to $F$ as
\[
A_{(X,\Delta)}(v)=v(K_Y+F-\sigma^*(K_X+\Delta)).
\]
In particular, if $v=\mathrm{ord}_F$, then we denote $A_{(X,\Delta)}(F)$.
A log subpair $(X,\Delta)$ is called subklt (resp.~sublc; $\delta$-sublc, where $\delta>0$) if $A_{(X,\Delta)}(v)>0$ (resp.~$\ge0$; $\ge\delta$) for any divisorial valuation $v$.
If $(X,\Delta)$ is further a pair, we call this {\it klt} (resp.~{\it lc}).
If a (sub)lc pair $(X,\Delta)$ has a log resolution $\pi\colon Y\to X$ such that $A_{(X,\Delta)}(E)>0$ for any $\pi$-exceptional divisor $E$, we say that $(X,\Delta)$ is {\it (sub)dlt}.
Furthermore,
   if $(X,\Delta)$ is lc and $A_{(X,\Delta)}(E)>0$ for any prime divisor $E$ over $X$ whose center is not a prime divisor, then we say that $(X,\Delta)$ is a {\it plt} pair.
A subvariety $Z\subset X$ is called a {\it lc center} of an lc pair $(X,\Delta)$ if there exists a prime divisor $E$ over $X$ such that $A_{(X,\Delta)}(E)=0$ and the center of $E$ is the generic point of $Z$.
It is well-known that if $(X,\Delta)$ is a dlt pair, then a lc center of $(X,\Delta)$ is a stratum of $\lfloor\Delta\rfloor$ (cf.~\cite[Theorem 4.16]{Ko}).
On the other hand, if for any Weil divisor $D$ on $X$, there exists $m\in\mathbb{Z}_{>0}$ such that $mD$ is Cartier, then we say that $X$ is {\it $\mathbb{Q}$-factorial}.
\end{de}

\begin{ex}\label{ex-ord_p}
    Let $P\in X$ be a closed point and suppose that $X$ is smooth at $P$.
    Let $\mu\colon \tilde{X}\to X$ be the blow up of $X$ at $P$ and $E$ be the $\mu$-exceptional divisor.
    Then we set $\mathrm{ord}_P:=\mathrm{ord}_E$ and this is an example of a divisorial valuation.
\end{ex}

\begin{de}[Models]
Let $(X,\Delta)$ be a log pair over $S$.
Let $\phi:X\dashrightarrow Y$ be a birational map such that $Y$ is a normal variety projective over $S$.
Let $\mathrm{Ex}(\phi)$ be the sum of all $\phi$-exceptional prime divisors.
$\phi:X\dashrightarrow Y$ is called a {\it birational contraction} if there exists no $\phi^{-1}$-exceptional divisor.
Here, suppose that 
$\phi$ is a birational contraction preserving the structure morphisms to $S$. 
Take a resolution $\Gamma$ of indetrminacy of $\phi$ and let $p:\Gamma\to X$ and $q:\Gamma\to Y$ be the canonical projections. Suppose that $K_Y+\phi_*\Delta$ is $\mathbb{Q}$-Cartier and let $E$ be the $q$-exceptional $\mathbb{Q}$-divisor on $\Gamma$ defined by
\[
E=p^*(K_X+\Delta)-q^*(K_Y+\phi_*\Delta).
\]
We say that $\phi$ is $K_X+\Delta$-non-positive (resp.~$K_X+\Delta$-negative) if $E$ is effective (resp.~and further $\mathrm{Supp}\,E$ contains all $\phi$-exceptional prime divisors).
We call $(Y,\phi_*\Delta)$ (or simply call $Y$)
\begin{itemize}
    \item a {\it good minimal model} of $(X,\Delta)$ if $K_Y+\phi_*\Delta$ is relatively semiample over $S$ and $\phi$ is $K_X+\Delta$-negative,
    \item the {\it log canonical (lc) model} of $(X,\Delta)$ if $K_Y+\phi_*\Delta$ is relatively ample over $S$ and $\phi$ is $K_X+\Delta$-non-positive.
\end{itemize}
For details, see \cite{KoMo} and \cite{BCHM}.
\end{de}

Let $D$ be a $\mathbb{Q}$-divisor on $X$. 
We say that a $\mathbb{Q}$-divisor $D'$ is {\it relatively linearly equivalent to $D$ over $S$} if there exists a Cartier divisor $M$ on $S$ such that $D\sim D'+f^*M$, where $f\colon X\to S$ is the canonical morphism, and denote $D\sim_{S}D'$.
If there exists a non-zero integer $m\in\mathbb{Z}$ such that $mD\sim_SmD'$, then we say that $D'$ is relatively $\mathbb{Q}$-linearly equivalent to $D$ over $S$ and denote $D'\sim_{S,\mathbb{Q}}D$.
We denote by $\mathbf{B}(D/S)$ a Zariski closed subset that is the intersection of the supports of all effective $\mathbb{Q}$-divisors relatively $\mathbb{Q}$-linearly equivalent to $D$.
Furthermore, we set $\mathbf{B}_{+}(D/S):=\mathbf{B}((D-\epsilon A)/S)$ for an $f$-ample divisor $A$ and a sufficiently small rational number $\epsilon>0$.
$\mathbf{B}_{+}(D/S)$ is independent of the choices of $A$
 and $\epsilon$ (cf.~\cite[\S10.3]{Laz}).
On the other hand, if $S=\mathrm{Spec}\,\mathbb{C}$ and $\mathbb{Q}$-divisors $D_1$ and $D_2$ satisfy that $C\cdot (D_2-D_1)=0$ for any curve $C\subset X$, where $D_2-D_1$ is $\mathbb{Q}$-Cartier, we say that $D_1$ and $D_2$ are numerically equivalent and denote $D_1\equiv D_2$.

 Now, we are ready to explain briefly a minimal model program with scaling (see \cite{BCHM} or \cite{fujino-foundation} for details).
 Suppose that $(X,\Delta)$ is a $\mathbb{Q}$-factorial dlt pair and let $C$ be an effective and big $\mathbb{Q}$-Cartier $\mathbb{Q}$-divisor with $\mathbf{B}_+(C/S)$ contains no lc center of $(X,\Delta)$. 
 Suppose that $(X,\Delta+C)$ is lc and $K_X+\Delta+C$ is nef.
If $K_X+\Delta$ is pseudoeffective but not nef, then there exists a rational number $0<\lambda_1\le1$ such that $K_X+\Delta+\lambda_1C$ is nef and there exists a negative extremal ray $R_1$ such that $R_1\cdot (K_X+\Delta+\lambda_1C)=0$.
 Otherwise, let $\lambda_1=0$.
Let $(X_1,\Delta_1)$ be the base of the divisorial contraction or the flip induced by $R_1$ with respect to $K_X+\Delta$ and $C_1$ be the strict transform of $C$ (cf.~\cite{KoMo}).
Then we see that $(X_1,\Delta_1)$ is dlt, $(X_1,\Delta_1+\lambda_1C_1)$ is lc, and $\mathbf{B}_+(C_1/S)$ contains no lc center of $(X_1,\Delta_1)$.
Thus, we see by the above argument that if $K_{X_1}+\Delta_1$ is not nef, then there exists $0<\lambda_2\le\lambda_1$ such that $K_{X_1}+\Delta_1+\lambda_2C_1$ is nef and there exists a negative extremal ray $R_2$ for $K_{X_1}+\Delta_1$ such that $(K_{X_1}+\Delta_1+\lambda_2C_1)\cdot R_2=0$.
Then we obtain $(X_2,\Delta_2)$ as the divisorial contraction or the flip induced by $R_2$.
By repeating this process, we obtain a non-increasing sequence $\{\lambda_i\}_{i\ge1}$ of rational numbers and a sequence of divisorial contractions or flips
\[
(X,\Delta)\dashrightarrow(X_1,\Delta_1)\dashrightarrow\cdots\dashrightarrow(X_i,\Delta_i)\dashrightarrow\cdots.
\]
If there exists $i\in\mathbb{Z}_{>0}$ such that $\lambda_i=0$, then we call the above procedure a {\it minimal model program of $(X,\Delta)$ over $S$ with scaling of $C$}.
For details, see \cite[4.4.11]{fujino-foundation}.
We say that we run a minimal model program over $S$ with ample scaling, if we take a general ample $\mathbb{Q}$-divisor $C$ and run a minimal model program over $S$ with scaling of $C$ and this program terminates.
\begin{de}[$G$-equivariant models]\label{de-g-equiv-model}
Let $G$ be a connected algebraic group and suppose that $G$ acts on $S$.
If $(X,\Delta)$ is a log pair over $S$, there exists $f\colon X\to S$ and $G$ acts on $X$ in the way that $f$ is $G$-equivariant and $\Delta$ is $G$-stable, then we say that $(X,\Delta)$ admits a $G$-action.
Suppose that $G$ acts on $(X,\Delta)$ and let $\phi\colon X\dashrightarrow Y$ be a birational map.
If $Y$ admits a $G$-action and the graph $\Gamma$ of $\phi$ is a $G$-invariant subvariety of $X\times Y$, then we say that $\phi$ is {\it $G$-equivariant}. 
If $(Y,\phi_*\Delta)$ is a good minimal model of $(X,\Delta)$ and $\phi$ is $G$-equivariant, we call $Y$ a $G$-{\it equivariant good minimal model}.
By the argument of the proof of \cite[Theorem 7]{LX} (see also \cite[1.5]{A2}), if $Y$ is a good minimal model obtained by a certain minimal model program of $(X,\Delta)$ with scaling, then $Y$ is a $G$-equivariant good minimal model.
Note also that if $Y$ is the lc model of $(X,\Delta)$, then $Y$ has the natural $G$-action and the canonical birational map $\phi\colon X\dashrightarrow Y$ is $G$-equivariant since $Y=\mathbf{Proj}_{S}(\oplus_{l\ge0}f_*\mathcal{O}_{X}(lr_0(K_{X}+\Delta)))$, where $r_0(K_{X}+\Delta)$ is Cartier.
\end{de}

\subsection{K-stability}
$(X,\Delta,L)$ is called a {\it polarized log (sub)pair} if $(X,\Delta)$ is a (sub)pair over $\mathrm{Spec}\,\mathbb{C}$ and $L$ is an ample $\mathbb{Q}$-line bundle.
In this subsection, we recall the definition of log K-stability of polarized log pairs.
See \cite{BHJ} for details.
If there is no fear of confusion and $\Delta=0$, we simply denote $(X,0,L)$ by $(X,L)$.

\begin{de}[Test configuration]\label{de-test-config}
Let $\pi:\mathcal{X}\to \mathbb{P}^1$ be a projective and flat morphism such that $\pi$ is $\mathbb{G}_m$-equivariant, where $\mathbb{P}^1$ admits the canonical $\mathbb{G}_m$-action by multiplication.
If there exists a $\mathbb{G}_m$-equivariant isomorphism $\pi^{-1}(\mathbb{P}^1\setminus\{0\})\cong X\times(\mathbb{P}^1\setminus\{0\})$ over $\mathbb{P}^1\setminus\{0\}$, where $\mathbb{G}_m$ trivially acts on the first component of $X\times(\mathbb{P}^1\setminus\{0\})$, $\mathcal{X}$ is called a {\it test configuration} for $X$.
Assume further that $\mathcal{X}$ admits a relatively
(semi)ample $\mathbb{G}_m$-linearized $\mathbb{Q}$-line bundle $\mathcal{L}$ over $\mathbb{P}^1$ such that $\mathcal{L}|_{\pi^{-1}(\mathbb{P}^1\setminus\{0\})}$ coincides with $L\otimes\mathcal{O}_{\mathbb{P}^1\setminus\{0\}}$ with the trivial $\mathbb{G}_m$-action on $\mathcal{L}|_{\pi^{-1}(\infty)}$.
Here, we say that a $\mathbb{Q}$-line bundle $\mathcal{L}$ is $\mathbb{G}_m$-linearized if there exists a positive integer $m$ such that $m\mathcal{L}$ is a $\mathbb{G}_m$-linearlized line bundle in the sense of \cite[Definition 1.6]{GIT}.
Then, $(\mathcal{X},\mathcal{L})$ is called a {\it (semi)ample test configuration} for $(X,L)$.

 If $\mathcal{X}$ is $\mathbb{G}_m$-equivariantly isomorphic to $X\times\mathbb{P}^1$ with the trivial action, then we call $\mathcal{X}$ {\it trivial}. We denote $(X\times\mathbb{P}^1,L\otimes\mathcal{O}_{\mathbb{P}^1})$ with the trivial action by $(X_{\mathbb{P}^1},L_{\mathbb{P}^1})$. 
 Similarly, we denote $\Delta\times\mathbb{P}^1\subset X_{\mathbb{P}^1}$ by $\Delta_{\mathbb{P}^1}$ for any $\mathbb{Q}$-divisor $\Delta$ on $X$.

Let $Y$ be another projective variety and $f\colon X\to Y$ be a morphism.
If $\mathcal{X}$ (resp.~$\mathcal{Y}$) is a test configuration for $X$ (resp.~$Y$), we note that there exists a unique $\mathbb{G}_m$-equivariant rational map $F\colon\mathcal{X}\dashrightarrow\mathcal{Y}$ induced by $f\times\mathrm{id}_{\mathbb{P}^1\setminus\{0\}}$.
We call such $F$ the {\it canonical} rational map.
If $F$ is further a morphism, then we say that there exists a canonical morphism $F\colon\mathcal{X}\to\mathcal{Y}$
.


\end{de}

\begin{rem}
    Usually, we call $\pi^{-1}(\mathbb{P}^1\setminus\{\infty\})$ a test configuration (see \cite{BHJ} for example) in the situation of Definition \ref{de-test-config} rather than our $\mathcal{X}$.
In Definition \ref{de-test-config}, what we defined to be test configurations are called the canonical compactifications of test configurations in \cite{BHJ}.
In this paper, we only make use of the canonical compactifications of test configurations.
This is why we define the notion of test configuration as in Definition \ref{de-test-config}.
\end{rem}

We note that if a test configuration $\mathcal{X}$ for $X$ is normal, then there exists the following relationship between irreducible components of $\mathcal{X}_0$ and divisorial valuations of $X$ (cf.~\cite[\S4]{BHJ}).
Let $E$ be an irreducible component of $\mathcal{X}_0$ and set $b_E:=\mathrm{ord}_E(\mathcal{X}_0)$.
If $E$ is not the strict transform of the central fiber of $X_{\mathbb{P}^1}$, we know that $\frac{1}{b_E}\mathrm{ord}_E|_{X}$ is a nontrivial divisorial valuation and we denote this by $v_E$ (see \cite[Lemma 4.1]{BHJ}).

\begin{ntt}
For simplicity, we will denote line bundles and divisors interchangeably
. For instance, $$L^m\cdot (kH+D):=L^{\cdot m}\cdot (H^{\otimes k}\otimes \mathcal{O}_X(D)).$$ 

\end{ntt}

We define the following functionals and the Donaldson-Futaki invariant as in \cite{BHJ}.

\begin{de}[{\cite[\S 6, \S 7]{BHJ}}]\label{nadef}
Let $(X,\Delta,L)$ be a polarized log subpair of dimension $n$.
Take an arbitrary normal semiample test configuration $(\mathcal{X},\mathcal{L})$ for $(X,L)$.
Let $\rho\colon\mathcal{X}\dashrightarrow X_{\mathbb{P}^1}$ be the canonical birational map.
It is well-known that there exist a normal test configuration $\mathcal{Y}$ and a canonical morphism $\sigma\colon\mathcal{Y}\to\mathcal{X}$ such that the canonical birational map $\rho_{\mathcal{Y}}=\rho\circ\sigma$ is a morphism.
Furthermore, let $\mathcal{Y}_{0,\mathrm{red}}$ be the reduced structure of $\mathcal{Y}_0$, $\Delta_{\mathcal{Y}}$ be the closure of $\Delta\times (\mathbb{P}^1\setminus\{0\})$ in $\mathcal{Y}$ and $K^{\mathrm{log}}_{(\mathcal{Y},\Delta_{\mathcal{Y}})/\mathbb{P}^1}:=K_{(\mathcal{Y},\Delta_{\mathcal{Y}})/\mathbb{P}^1}+(\mathcal{Y}_{0,\mathrm{red}}-\mathcal{Y}_0)$, where $\pi\colon\mathcal{Y}\to\mathbb{P}^1$ is the canonical morphism and $K_{(\mathcal{Y},\Delta_{\mathcal{Y}})/\mathbb{P}^1}:=K_{\mathcal{Y}}+\Delta_{\mathcal{Y}}-\pi^*K_{\mathbb{P}^1}$.
Then we define
\begin{itemize}
\item 
$I^{\mathrm{NA}}(\mathcal{X},\mathcal{L})=(L^n)^{-1}(\sigma^*\mathcal{L}\cdot \rho_\mathcal{Y}^*L_{\mathbb{P}^1}^n)-(L^n)^{-1}(\sigma^*\mathcal{L}- \rho_\mathcal{Y}^*L_{\mathbb{P}^1})\cdot(\sigma^*\mathcal{L})^n,$
\item
$E^{\mathrm{NA}}(\mathcal{X},\mathcal{L})=\frac{\mathcal{L}^{n+1}}{(n+1)(L^n)},$

\item 
$J^{\mathrm{NA}}(\mathcal{X},\mathcal{L})=(L^n)^{-1}(\sigma^*\mathcal{L}\cdot \rho_\mathcal{Y}^*L_{\mathbb{P}^1}^n)- E^{\mathrm{NA}}(\mathcal{X},\mathcal{L}),$
\item 
the non-Archimedean J$^H$-energy  is defined to be
\[
(\mathcal{J}^H)^{\mathrm{NA}}(\mathcal{X},\mathcal{L})=(L^n)^{-1}(\rho_\mathcal{Y}^*H_{\mathbb{P}^1}\cdot (\sigma^*\mathcal{L})^{n})- (L^n)^{-1}(nH\cdot L^{n-1})E^{\mathrm{NA}}(\mathcal{X},\mathcal{L}),
\]
where $H$ is a $\mathbb{Q}$-line bundle on $X$,
\item
the non-Archimedean entropy is defined to be
\begin{align*}
H_\Delta^{\mathrm{NA}}(\mathcal{X},\mathcal{L})&=(L^n)^{-1}\sum_Eb_EA_{(X,B)}(v_E)(E\cdot(\sigma^*\mathcal{L})^n)\\
&=(L^n)^{-1}(K^{\mathrm{log}}_{(\mathcal{Y},\Delta_{\mathcal{Y}})/\mathbb{P}^1}-\rho_{\mathcal{Y}}^*K_{(X_{\mathbb{P}^1},\Delta_{\mathbb{P}^1})/\mathbb{P}^1})\cdot(\sigma^*\mathcal{L})^n,
\end{align*}
where $E$ runs over all irreducible components of $\mathcal{Y}_0$ that is not the strict transform of $X\times\{0\}$,
\item
the log Donaldson-Futaki invariant is defined to be
$$\mathrm{DF}_{\Delta}(\mathcal{X},\mathcal{L})=(L^n)^{-1}(K_{(\mathcal{Y},\Delta_{\mathcal{Y}})/\mathbb{P}^1}-\rho_\mathcal{Y}^*K_{(X_{\mathbb{P}^1},\Delta_{\mathbb{P}^1})/\mathbb{P}^1})\cdot\mathcal{L}^n+(\mathcal{J}^{K_{X}+\Delta})^{\mathrm{NA}}(\mathcal{X},\mathcal{L}),$$
\item 
the non-Archimedean Mabuchi energy
$$M_\Delta^{\mathrm{NA}}(\mathcal{X},\mathcal{L})=H_\Delta^{\mathrm{NA}}(\mathcal{X},\mathcal{L})+(\mathcal{J}^{K_{X}+\Delta})^{\mathrm{NA}}(\mathcal{X},\mathcal{L}).$$
\end{itemize}
It is well-known that these invariants are independent of the choice of $\sigma\colon\mathcal{Y}\to\mathcal{X}$.
We say that a semiample test configuration $(\tilde{\mathcal{X}},\tilde{\mathcal{L}})$ {\it dominates} $(\mathcal{X},\mathcal{L})$ via $\mu$ if there exists a canonical morphism $\mu:\tilde{\mathcal{X}}\to\mathcal{X}$ 
such that $\tilde{\mathcal{L}}=\mu^*\mathcal{L}$.
It is proved in \cite[\S\S 7.5]{BHJ} that those functionals are {\it pullback invariant} as defined in \cite{BHJ} i.e.~if $(\tilde{\mathcal{X}},\tilde{\mathcal{L}})$ dominates $(\mathcal{X},\mathcal{L})$ then $J^{\mathrm{NA}}(\tilde{\mathcal{X}},\tilde{\mathcal{L}})=J^{\mathrm{NA}}(\mathcal{X},\mathcal{L})$
cf.~\cite[\S6, \S7]{BHJ} for example.
\end{de}

Note that $I^{\mathrm{NA}}$, $J^{\mathrm{NA}}$ and $I^{\mathrm{NA}}-J^{\mathrm{NA}}$ are nonnegative and equivalent norms to each other in the sense of \cite[Proposition 7.8]{BHJ}. $I^{\mathrm{NA}}-J^{\mathrm{NA}}$ is also introduced by \cite{Der} and called the minimum norm.
More precisely, the following holds.

\begin{lem}[cf.~{\cite[Proposition 7.8]{BHJ}}]\label{lem-equiv-norm}
    Let $(X,\Delta,L)$ be a polarized log subpair of dimension $n$ and $(\mathcal{X},\mathcal{L})$ be a normal ample test configuration for $(X,L)$.
    Then
    \begin{align*}
    0\le\frac{1}{n}J^{\mathrm{NA}}(\mathcal{X},\mathcal{L})\le I^{\mathrm{NA}}(\mathcal{X},\mathcal{L})-J^{\mathrm{NA}}(\mathcal{X},\mathcal{L})=(\mathcal{J}^L)^{\mathrm{NA}}(\mathcal{X},\mathcal{L})\le nJ(\mathcal{J}^L)^{\mathrm{NA}}(\mathcal{X},\mathcal{L}).
    \end{align*}
    Furthermore, $J^{\mathrm{NA}}(\mathcal{X},\mathcal{L})=0$ if and only if $\mathcal{X}$ is trivial.
\end{lem}


\begin{de}[K-stability and J-stability]\label{def-K-st}
Let $(X,\Delta,L)$ be a polarized log pair. $(X,\Delta,L)$ is called {\it K-stable} if 
\[
\mathrm{DF}_{\Delta}(\mathcal{X},\mathcal{L})>0
\]
for any non-trivial normal ample test configuration. On the other hand, $(X,\Delta,L)$ is called {\it uniformly K-stable} (resp, {\it K-semistable}) if there exists a constant $c>0$ (resp., $c\ge 0$) such that
\[
\mathrm{DF}_{\Delta}(\mathcal{X},\mathcal{L})\ge c J^{\mathrm{NA}}(\mathcal{X},\mathcal{L})
\]
for any semiample test configuration $(\mathcal{X},\mathcal{L})$ for $(X,L)$. The above condition is known by \cite[\S8]{BHJ} to be equivalent to that for any semiample normal test configuration $(\mathcal{X},\mathcal{L})$
\[
M^{\mathrm{NA}}_{\Delta}(\mathcal{X},\mathcal{L})\ge c J^{\mathrm{NA}}(\mathcal{X},\mathcal{L}).
\]
The definition of J$^H$-stability is of similar form (cf.~\cite{Hat}).
That is, we say that $(X,L)$ is uniformly J$^H$-stable (resp.~J$^H$-semistable) if there exists $c>0$ (resp.~$c\ge0$) such that 
\[
(\mathcal{J}^H)^{\mathrm{NA}}(\mathcal{X},\mathcal{L})\ge c J^{\mathrm{NA}}(\mathcal{X},\mathcal{L}).
\]

\end{de}


Let us recall the scaling action to a non-Archimedean metric \cite[\S6.2]{BHJ}.

\begin{ntt}\label{rramamra}
Let $(\mathcal{X},\mathcal{L})$ be a normal semiample test configuration and $d\in\mathbb{Z}_{>0}$. Then let $\mathcal{X}^{(d)}$ be the {\it normalization of the base change} of $(\mathcal{X},\mathcal{L})$ by the $d$-th power map $\mathbb{P}^1\ni t\mapsto t^d\in\mathbb{P}^1$ and $\mathcal{L}^{(d)}$ be the pullback of $\mathcal{L}$. We call $(\mathcal{X}^{(d)},\mathcal{L}^{(d)})$ the normalized base change of $(\mathcal{X},\mathcal{L})$ by the $d$-th power map $\mathbb{P}^1\ni t\mapsto t^d\in\mathbb{P}^1$. 
It is well-known that $d\,M^{\mathrm{NA}}_{\Delta}(\mathcal{X},\mathcal{L})=M^{\mathrm{NA}}_{\Delta}(\mathcal{X}^{(d)},\mathcal{L}^{(d)})$ and $d\,\mathrm{DF}_{\Delta}(\mathcal{X},\mathcal{L})\ge\mathrm{DF}_{\Delta}(\mathcal{X}^{(d)},\mathcal{L}^{(d)})$ for any $d\in\mathbb{Z}_{>0}$ by \cite[Proposition 7.16]{BHJ}.
If $\mathcal{B}$ is the strict transform of $B_{\mathbb{P}^1}$ on $\mathcal{X}$, we denote the strict transform of $B_{\mathbb{P}^1}$ on $\mathcal{X}^{(d)}$ by $\mathcal{B}^{(d)}$, where $B$ is a $\mathbb{Q}$-divisor on $X$.
\end{ntt}

Recall the definitions of the log canonical threshold, the $\alpha$-invariant and $\delta$-invariant.
\begin{de}
Let $(X,B)$ be a subpair and $D$ be an effective $\mathbb{Q}$-Cartier $\mathbb{Q}$-divisor. Suppose that there exists $l\in\mathbb{Q}$ such that $(X,B+lD)$ is sublc. Then we set the {\it log canonical threshold} of $(X,B)$ with respect to $D$ as
\[
\mathrm{lct}_{(X,B)}(D)=\inf\{r\in\mathbb{Q}|(X,B+rD) \,\mathrm{is}\,\mathrm{sublc}\}=\min_v\frac{A_{(X,B)}(v)}{v(D)},
\]
where $v$ runs over all divisorial valuations such that $v(D)\ne0$.
\end{de}

The $\alpha$-invariant was originally introduced by Tian \cite{T3} for Fano manifolds to detect the existence of K\"{a}hler-Einstein metrics, but this is very useful to detect K-stability for other kinds of polarized varieties.

\begin{de}\label{defalpha}
Let $(X,B,L)$ be a polarized klt pair. Then the {\it alpha invariant} of $(X,B)$ with respect to $L$ is
\[
\alpha_{(X,B)}(L)=\inf_D \mathrm{lct}_{(X,B)}(D),
\] where $D$ runs over all elements of $|L|_{\mathbb{Q}}$, which is the set of effective $\mathbb{Q}$-Cartier $\mathbb{Q}$-divisors $D$ such that $D\sim_{\mathbb{Q}}L$.
If we set $T_L(v)=\sup_{D\in|L|_{\mathbb{Q}}}v(D)$, then we have
\[
\alpha_{(X,B)}(L)=\inf_v\frac{A_{(X,B)}(v)}{T_L(v)},
\]
where $v$ runs over all divisorial valuations.
\end{de}

\begin{de}[\cite{FO}, \cite{BlJ}]\label{defdelta}
Let $(X,B,L)$ be a polarized klt pair. 
Take $r\in\mathbb{Z}_{>0}$ such that $rL$ is a line bundle. 
For any $m\in\mathbb{Z}_{>0}$, $D\in |L|_{\mathbb{Q}}$ is called a divisor of {\it $mr$-basis type} if there exists a basis $\{f_j\}_{j=1}^{h^0(X,mrL)}\subset H^0(X,mrL)$ such that 
\[
D=(mrh^0(X,mrL))^{-1}\sum_{j=1}^{h^0(X,mrL)}\mathrm{div}_{mrL}(f_j).
\]
Here, $\mathrm{div}_{mrL}(f_j)$ is an effective Cartier divisor defined by $f_j$ as a section of $H^0(X,mrL)$.
We denote by $|L|_{mr\text{-basis}}$ the set of all $mr$-basis type divisors $D$ with respect to $L$.
We set $S^{mr}_{L}(v):=\sup_{D\in|L|_{mr\text{-basis}}}v(D)$ for any divisorial valuation $v$ of $X$.
Then, we set
\[
\delta^{mr}_{(X,B)}(L):=\inf_{D\in|L|_{mr\text{-basis}}}\mathrm{lct}_{(X,B)}(D)=\inf_v\frac{A_{(X,B)}(v)}{S^{mr}_{L}(v)},
\]
where $v$ runs over all divisorial valuations on $X$.
By \cite[Theorem A]{BlJ}, $\{\delta^{mr}_{(X,B)}(L)\}_{m=1}^{\infty}$ converges and set the {\it delta invariant} of $(X,B)$ with respect to $L$ as
\[
\delta_{(X,B)}(L):=\lim_{m\to\infty}\delta^{mr}_{(X,B)}(L).
\]
Moreover, thanks to \cite[\S2]{BlJ}, we have that $\lim_{m\to\infty}S^{mr}_{L}(v)=S_L(v)$ for any divisorial valuation $v$ on $X$ and
\[
\delta_{(X,B)}(L)=\inf_v\frac{A_{(X,B)}(v)}{S_{L}(v)},
\]
where $v$ runs over all divisorial valuations on $X$ and $S_{L}(v):=\mathrm{vol}(L)^{-1}\int_0^{\infty}\mathrm{vol}(L-xv)dx$.
Here, $\mathrm{vol}(L-xv)$ means $\mathrm{vol}(\pi^*L-xcE)$, where $c\in\mathbb{Q}_{>0}$, $E$ is a prime divisor over $X$ and $\pi\colon Y\to X$ is a log resolution such that $v=c\,\mathrm{ord}_E$ and $E$ is a prime divisor on $Y$.
See \cite{BlJ} for details.
\end{de}
We note the following well-known facts to compute $\delta_{(X,B)}(L)$.
Let $(X,B,L)$ be a polarized klt pair.
For any $m\in\mathbb{Z}_{>0}$ such that $mL$ is a Cartier divisor, $\lambda\in\mathbb{Z}_{\ge0}$ and prime divisor $E$ over $X$, we set 
\[
\mathcal{F}_E^\lambda H^0(X,mL):=H^0(Y,m\sigma^*L-\lambda E),
\]
where $\sigma\colon Y\to X$ is a resolution of singularities such that $E$ is a prime divisor on $Y$.
Let $\{s_i\}_{i=1}^{h^0(X,mL)}$ be a basis of $H^0(X,mL)$.
If we can take a basis of $\mathcal{F}_E^\lambda H^0(X,mL)$ as a subset of $\{s_i\}_{i=1}^{h^0(X,mL)}$ for any $\lambda$, we say that $\{s_i\}_{i=1}^{h^0(X,mL)}$ is {\it compatible} with $E$.
In this situation, we have the following.
\begin{lem}\label{lem--compatible--basis}
\begin{enumerate}
\item For any $m\in\mathbb{Z}_{>0}$ such that $mL$ is a Cartier divisor and prime divisor $E$ over $X$, take a basis $\{s_i\}_{i=1}^{h^0(X,mL)}$ of $H^0(X,mL)$ compatible with $E$.
If $D$ is the $m$-basis type divisor defined by $\{s_i\}$, then
\[
S^{m}_L(\mathrm{ord}_{E})=\mathrm{ord}_{E}(D)=\frac{\sum_{\lambda\ge1}\mathrm{dim}\,\mathcal{F}^{\lambda}_EH^0(X,mL)}{mh^0(X,mL)}.
\]\label{lem--comp--(1)}
\item Let $E$ and $E'$ be prime divisors over $X$.
Then there exists a basis of $H^0(X,mL)$ compatible with both of $E$ and $E'$ for any $m\in\mathbb{Z}_{>0}$ such that $mL$ is a Cartier divisor.\label{lem--comp--(2)}
\end{enumerate}
\end{lem}

\begin{proof}
 (\ref{lem--comp--(1)}) follows from the proof of \cite[Lemma 2.2]{FO}.
 (\ref{lem--comp--(2)}) is nothing but \cite[Lemma 3.1]{AZ}. 
\end{proof}

If $v=\mathrm{ord}_F$ for some prime divisor $F$ over $X$, we will simply denote $T_L(F):=T_L(\mathrm{ord}_F)$.

We remark that for any $r\in\mathbb{Q}_{>0}$, we have that $r\delta_{(X,B)}(rL)=\delta_{(X,B)}(L)$ and $r\alpha_{(X,B)}(rL)=\alpha_{(X,B)}(L)$.
These invariants are important in the following sense. 

\begin{thm}[{\cite[Proposition 9.16]{BHJ}}, \cite{BlJ}]\label{thm--alpha}
Let $(X,B,L)$ be a klt polarized pair. Then the following hold.
\begin{enumerate}
\item $\alpha_{(X,B)}(L)>0$ and \[
H^{\mathrm{NA}}_{B}(\mathcal{X},\mathcal{L})\ge\alpha_{(X,B)}(L)I^{\mathrm{NA}}(\mathcal{X},\mathcal{L})
\]
for any semiample normal test configuration for $(X,L)$.
\item Suppose that $K_X+B=-L$. Then, $(X,B,L)$ is uniformly K-stable (resp., K-semistable) if and only if $\delta_{(X,B)}(L)>1$ (resp., $\ge1$).
\end{enumerate}
\end{thm}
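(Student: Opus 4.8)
The plan is to establish the two assertions by independent routes: assertion (1) concerns arbitrary test configurations and rests on the non-Archimedean pluripotential formalism of \cite[\S 9]{BHJ}, while assertion (2) is the valuative criterion for log Fano pairs due to \cite{BlJ}.

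For (1), we would first rewrite the alpha invariant valuatively: since $\mathrm{lct}_{(X,B)}(D)=\inf_v A_{(X,B)}(v)/v(D)$, exchanging the two infima gives $\alpha_{(X,B)}(L)=\inf_v A_{(X,B)}(v)/T_L(v)$, where $v$ runs over divisorial valuations on $X$ and $T_L(v)\coloneq\sup_{D\in|L|_{\mathbb{Q}}}v(D)$ is the pseudoeffective threshold. Positivity $\alpha_{(X,B)}(L)>0$ then follows from the comparison $T_L(v)\le(n+1)S_L(v)$ — itself a consequence of the concavity of $x\mapsto\mathrm{vol}(L-xv)^{1/n}$ on its interval of positivity — combined with $\delta_{(X,B)}(L)>0$, which holds for every polarized klt pair by \cite{BlJ}; indeed these two facts give $\alpha_{(X,B)}(L)\ge\delta_{(X,B)}(L)/(n+1)$. (Alternatively, $\alpha_{(X,B)}(L)>0$ for klt pairs is classical, cf. \cite[\S 9]{BHJ}.) For the entropy inequality, fix a normal semiample test configuration $(\mathcal{X},\mathcal{L})$ with $\mathbb{G}_m$-equivariant compactification $(\overline{\mathcal{X}},\overline{\mathcal{L}})$ and canonical map $\rho$, and start from $H^{\mathrm{NA}}_B(\mathcal{X},\mathcal{L})=(L^n)^{-1}\sum_E A_{(X,B)}(v_E)(E\cdot\overline{\mathcal{L}}^n)$, the sum over components $E$ of $\mathcal{X}_0$ with $v_E=b_E^{-1}\mathrm{ord}_E|_{K(X)}$. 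We would bound each summand by $A_{(X,B)}(v_E)\ge\alpha_{(X,B)}(L)\,T_L(v_E)$ (the $E$ with $v_E$ trivial contribute $0$ on both sides), after which it remains to prove the purely intersection-theoretic inequality $(L^n)^{-1}\sum_E T_L(v_E)(E\cdot\overline{\mathcal{L}}^n)\ge I^{\mathrm{NA}}(\mathcal{X},\mathcal{L})$. For this last step we would pass to the non-Archimedean metric $\varphi$ represented by $(\mathcal{X},\mathcal{L})$ and read the left side as $\int_{X^{\mathrm{an}}}T_L\,\mathrm{MA}(\varphi)$ and $I^{\mathrm{NA}}(\varphi)=\int_{X^{\mathrm{an}}}\varphi\,(\mathrm{MA}(\varphi_{\mathrm{triv}})-\mathrm{MA}(\varphi))$, the inequality following from the comparison of $\varphi$ with the pseudoeffective threshold function together with the fact that $\mathrm{MA}(\varphi)$ and $\mathrm{MA}(\varphi_{\mathrm{triv}})$ are probability measures; this is essentially the content of \cite[Proposition 9.16]{BHJ}, extended routinely to the present log/semiample setting.

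For (2), set $L=-(K_X+B)$ and introduce the beta invariant $\beta_{(X,B)}(v)=A_{(X,B)}(v)-S_L(v)$. We would use that for log Fano pairs K-(semi)stability can be tested with $M^{\mathrm{NA}}_B$ (which differs from $\mathrm{DF}_B$ only by a term vanishing for reduced central fibers, by \cite[\S 8]{BHJ}), and that by \cite{BlJ} — building on the valuative criterion of Fujita \cite{Fjt} and the special-test-configuration reduction of Li--Xu \cite{LX} — the functional on the test configuration degenerating $(X,B)$ along a divisorial $v$ is governed by $\beta_{(X,B)}(v)$, while every semiample test configuration is dominated by such a special one. Hence $(X,B,L)$ is K-semistable iff $\beta_{(X,B)}(v)\ge0$ for all $v$, i.e. $A_{(X,B)}(v)\ge S_L(v)$ for all $v$, i.e. $\delta_{(X,B)}(L)=\inf_v A_{(X,B)}(v)/S_L(v)\ge1$; and $(X,B,L)$ is uniformly K-stable iff $\beta_{(X,B)}(v)\ge\varepsilon\,S_L(v)$ for some fixed $\varepsilon>0$ and all $v$, i.e. $\delta_{(X,B)}(L)>1$, where we also invoke the equivalence of uniform K-stability with the coercivity $\mathrm{DF}_B\ge c\,J^{\mathrm{NA}}$ for log Fano pairs.

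The step we expect to be the main obstacle is the last inequality in (1): turning the global invariant $\alpha_{(X,B)}(L)$, defined through all of $|L|_{\mathbb{Q}}$, into a bound that holds componentwise on an arbitrary test configuration requires the full non-Archimedean dictionary of \cite{BHJ} — Monge-Ampère and Duistermaat-Heckman measures together with the comparison of a non-Archimedean metric with the pseudoeffective threshold function — whereas $\alpha_{(X,B)}(L)>0$ and all of (2) are comparatively soft, being a short valuative manipulation and a direct appeal to \cite{BlJ} respectively.
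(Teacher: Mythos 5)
The paper offers no proof of this statement—it is quoted verbatim from \cite[Proposition 9.16]{BHJ} and \cite{BlJ}—and your sketch is a faithful reconstruction of exactly those cited arguments: the valuative rewriting $\alpha_{(X,B)}(L)=\inf_v A_{(X,B)}(v)/T_L(v)$ together with $T_L(v)\le(n+1)S_L(v)$ for positivity, the componentwise bound $A_{(X,B)}(v_E)\ge\alpha\,T_L(v_E)$ reduced to the comparison of $T_L$ with the non-Archimedean potential (after normalizing $\sup(\varphi-\varphi_{\mathrm{triv}})=0$, which is the content of \cite[Proposition 9.16]{BHJ}), and the Fujita--Li--Xu--Blum--Jonsson valuative criterion for (2). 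I see no gap beyond the level of detail appropriate to a cited result.
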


\begin{ex}\label{ex-delta-alpha-curve}
    If $X$ is a curve, then $2\alpha_{(X,B)}(L)=\delta_{(X,B)}(L)$.
Indeed, for any closed point $P\in X$, we have that $$\frac{1}{\mathrm{deg}(L)}\int^{\mathrm{deg}(L)}_0\mathrm{deg}(L-xP)dx=\frac{\mathrm{deg}(L)}{2}.$$
This means that $2S_L(\mathrm{ord}_P)=T_L(\mathrm{ord}_P)$.
\end{ex}


From now, we introduce K-stability of algebraic fiber spaces.

\begin{de}[\cite{Hat2}]\label{algfib}\label{algfibk}
  $f\colon(X,\Delta,H)\to (C,L)$ is called a {\it polarized algebraic fiber space (sub)pair} if $(X,\Delta, H)$ is a polarized (sub)pair
  , $(C,L)$ is a polarized normal variety and $f\colon X\to C$ is a contraction, i.e.~$f$ is projective and $f_*\mathcal{O}_X\cong\mathcal{O}_C$.
Let $f:(X,\Delta,H)\to (C,L)$ be a polarized algebraic fiber space pair.
Then $(X,\Delta,H)$ is called {\it adiabatically K-(semi)stable over} $(C,L)$ if $(X,\Delta,\epsilon H+f^*L)$ is K-(semi)stable for sufficiently small $\epsilon\in\mathbb{Q}_{>0}$. 
Furthermore, $(X,\Delta,H)$ is called
\begin{itemize}
\item {\it uniformly adiabatically K-stable over} $(C,L)$ if there exist positive constants $\delta>0$ and $\epsilon_0>0$ such that $\mathrm{DF}_{\Delta}(\mathcal{X},\mathcal{M})\ge \delta J^{\mathrm{NA}}(\mathcal{X},\mathcal{M})$ for any $\epsilon\in(0,\epsilon_0)\cap\mathbb{Q}$ and semiample test configuration $(\mathcal{X},\mathcal{M})$ for $(X,\epsilon H+f^*L)$,
\item {\it adiabatically K-unstable over} $(C,L)$ if for any sufficiently small rational number $\epsilon>0$, there exists a semiample test configuration $(\mathcal{X},\mathcal{M})$ for $(X,\epsilon H+f^*L)$ such that $\mathrm{DF}_{\Delta}(\mathcal{X},\mathcal{M})<0$.
\end{itemize}

We say that $(X,H)$ {\it has cscK metrics adiabatically} if $X$ is smooth, $\Delta=0$ and there exists $\delta_0>0$ such that $(X,\delta H+f^*L)$ has a cscK metric for any $0<\delta<\delta_0$.
Here, we will simply denote $H+f^*L$ by $H+L$. 
\end{de}

\subsection{Log-twisted K-stability}Next, we introduce log-twisted K-stability as follows. 
 \begin{de}\label{ltp}
A quadruple $(X,B,T,L)$ is called a {\it polarized log-twisted pair} if $(X,B,L)$ is a polarized log pair and $T$ is a $\mathbb{Q}$-line bundle on $X$.

We call $(X,B,T,L)$ a {\it log-twisted Fano pair} if 
$L\sim_{\mathbb{Q}}-(K_X+B+T)$ and $(X,B)$ is klt.
\end{de}

We remark that if $B=0$, then log-twisted K-stability is nothing but twisted K-stability introduced in \cite{Der}.

\begin{de}\label{deflogtwisted}
Let $(X,B,T,L)$ be a polarized log-twisted pair and $(\mathcal{X},\mathcal{L})$ be a semiample test configuration for $(X,L)$. Then the {\it log-twisted Donaldson-Futaki invariant} is 
\[
\mathrm{DF}_{(B,T)}(\mathcal{X},\mathcal{L})=\mathrm{DF}_{B}(\mathcal{X},\mathcal{L})+(\mathcal{J}^T)^\mathrm{NA}(\mathcal{X},\mathcal{L}).
\]
We can also define the {\it log-twisted non-Archimedean Mabuchi functional} as
\[
M^\mathrm{NA}_{(B,T)}(\mathcal{X},\mathcal{L})=M^\mathrm{NA}_{B}(\mathcal{X},\mathcal{L})+(\mathcal{J}^T)^\mathrm{NA}(\mathcal{X},\mathcal{L}).
\]
 Then, $(X,B,T,L)$ is called (log-twisted)
\begin{itemize}
\item {\it K-semistable} if 
\[
M^{\mathrm{NA}}_{(B,T)}(\mathcal{X},\mathcal{L})\ge0
\]
for any normal semiample test configuration,
\item {\it K-stable} if
\[
M^{\mathrm{NA}}_{(B,T)}(\mathcal{X},\mathcal{L})>0
\]
for any non-trivial normal ample test configuration,
\item {\it uniformly K-stable} if there exists a positive constant $\epsilon>0$ such that 
\[
M^{\mathrm{NA}}_{(B,T)}(\mathcal{X},\mathcal{L})\ge \epsilon J^{\mathrm{NA}}(\mathcal{X},\mathcal{L})
\]
for any normal semiample test configuration.
\end{itemize} 

\end{de}

\begin{de}
Let $T$ be a $\mathbb{Q}$-line bundle on $X$.
 If $T$ is semiample, we say that $A\sim_{\mathbb{Q}}T$ is a {\it general effective} $\mathbb{Q}$-divisor if there exists $m\in\mathbb{Z}_{>0}$ such that $mT$ is globally generated and $mA\in|mT|$.
When $T$ is not semiample, $A\sim_{\mathbb{Q}}T$ is called {\it general} if there exist two general semiample $\mathbb{Q}$-divisors $A_1$ and $A_2$ such that $T\sim_{\mathbb{Q}}A_1-A_2$.
\end{de}

We regard log-twisted K-stability as the log K-stability when we choose a general $\mathbb{Q}$-divisor $T_1\sim_{\mathbb{Q}}T$ as follows.

\begin{lem}\label{dtw}
Let $(X,B,T,L)$ be a polarized log-twisted pair and $(\mathcal{X},\mathcal{L})$ be a normal semiample test configuration for $(X,L)$.
Fix a $\mathbb{Q}$-Cartier $\mathbb{Q}$-divisor $D\sim_{\mathbb{Q}}T$. 
If the support of $D$ does not contain the center of a divisorial valuation $v_E$ for any irreducible component $E\subset \mathcal{X}_0$ of $\mathcal{X}$, then
\begin{align*}
\mathrm{DF}_{(B,T)}(\mathcal{X},\mathcal{L})&=\mathrm{DF}_{B+D}(\mathcal{X},\mathcal{L})\\
M^\mathrm{NA}_{(B,T)}(\mathcal{X},\mathcal{L})&=M^\mathrm{NA}_{B+D}(\mathcal{X},\mathcal{L}).
\end{align*}
\end{lem}
\begin{proof}
Let $\pi:\tilde{\mathcal{X}}\to\mathcal{X}$ and $\rho:\tilde{\mathcal{X}}\to X_{\mathbb{P}^1}$ be the canonical morphisms.
Then
\begin{align*}
\mathrm{DF}_{B+D}(\mathcal{X},\mathcal{L})&=\frac{1}{L^n}\left((K_{\tilde{\mathcal{X}}/\mathbb{P}^1}+\mathcal{B}+\mathcal{D})\cdot \pi^*\mathcal{L}^n-\frac{n(K_X+B+T)\cdot L^{n-1}}{(n+1)L^n}\pi^*\mathcal{L}^{n+1}\right),\quad\text{and}\\
\mathrm{DF}_{(B,T)}(\mathcal{X},\mathcal{L})&=\frac{1}{L^n}\left((K_{\tilde{\mathcal{X}}/\mathbb{P}^1}+\mathcal{B}+\rho^*D_{\mathbb{P}^1})\cdot \pi^*\mathcal{L}^n-\frac{n(K_X+B+T)\cdot L^{n-1}}{(n+1)L^n}\pi^*\mathcal{L}^{n+1}\right),
\end{align*}
where $\mathrm{dim}\,X=n$ and $\mathcal{B}$ and $\mathcal{D}$ are the strict transforms of $B_{\mathbb{P}^1}$ and $D_{\mathbb{P}^1}$ on $\tilde{\mathcal{X}}$ respectively. 
Thus
\[
\mathrm{DF}_{(B,T)}(\mathcal{X},\mathcal{L})-\mathrm{DF}_{B+D}(\mathcal{X},\mathcal{L})=\frac{1}{L^n}(\rho^*D_{\mathbb{P}^1}-\mathcal{D})\cdot \pi^*\mathcal{L}^n.
\]
By assumption, $\rho^*D_{\mathbb{P}^1}-\mathcal{D}$ is $\pi$-exceptional and hence $(\rho^*D_{\mathbb{P}^1}-\mathcal{D})\cdot \pi^*\mathcal{L}^n=0$. 
Similarly, we can show the assertion for the non-Archimedean Mabuchi functionals.
\end{proof}
By Lemma \ref{dtw}, we can deduce the similar properties to the log K-stability and twisted K-stability in \cite{OS}, \cite{BHJ} and \cite{De2}.
\begin{thm}\label{thm--odaka-type}
If a polarized log-twisted pair $(X,B,T,L)$ is K-semistable, then $(X,B)$ is lc.
Furthermore, if $L=-(K_X+B+T)$, then $(X,B)$ is klt.
\end{thm}
\begin{proof}
We follow the notations in \cite{Hat2}.
We only prove the first assertion.
If $\lceil B\rceil$ is reduced, due to \cite[Theorem 1.1]{OX}, there exists a closed subscheme whose Rees valuations $v$ satisfy that $A_{(X,B)}(v)<0$. Then, $A_{(X,B+T_1)}(v)=A_{(X,B)}(v)<0$ for any general divisor $T_1\sim_\mathbb{Q}T$. Therefore, the theorem follows from the proof of \cite[Theorem 5.1]{Hat2}. The rest also follows from the same argument of \cite[Theorem 5.1]{Hat2} and is left to the reader.
\end{proof}

Thanks to \cite[Proposition 9.16]{BHJ}, the following holds as in the logarithmic case. 
\begin{prop}[cf., {\cite[Corollaries 9.3 and 9.4]{BHJ}}, {\cite[Theorems 3.21, 3.23]{De2}}]\label{bhj9394}
Let $(X,B,T,L)$ be a polarized log-twisted pair. Then the following hold.
\begin{enumerate}
\item If $K_X+B+T\equiv0$ and $(X,B)$ is klt (resp., lc), then $(X,B,T,L)$ is uniformly K-stable (resp., K-semistable).
\item If $K_X+B+T\equiv L$ and $(X,B)$ is lc, then $(X,B,T,L)$ is uniformly K-stable.
\end{enumerate}
\end{prop}
The proofs are left to the readers.

We can consider a special log-twisted test configuration as follows.

\begin{de}\label{sfp}
Let $(X,B,T,L=-K_X-B-T)$ be a log-twisted Fano pair, $T$ is semiample and $(\mathcal{X}^{\mathrm{s}},\mathcal{L}^{\mathrm{s}})$ be a normal ample test configuration.
Let $\rho^{\mathrm{s}}\colon\mathcal{X}^\mathrm{s}\dashrightarrow X_{\mathbb{P}^1}$ be the canonical birational morphism and $\mathcal{B}^\mathrm{s}:=(\rho^{\mathrm{s}})^{-1}_*B_{\mathbb{P}^1}$.
Then $(\mathcal{X}^{\mathrm{s}},\mathcal{L}^{\mathrm{s}})$ is a {\it log-twisted special test configuration} for $(X,B,T,L)$ if $(\mathcal{X}^{\mathrm{s}},\mathcal{B}^{\mathrm{s}}+(\rho^{\mathrm{s}})^{-1}_*T_{1,\mathbb{P}^1}+\mathcal{X}_0)$ is plt (cf., \cite[2.34]{KoMo}) and $\mathcal{L}^{\mathrm{s}}\sim_{\mathbb{Q},\mathbb{P}^1}-(K_{\mathcal{X}^{\mathrm{s}}}+\mathcal{B}^{\mathrm{s}}+(\rho^{\mathrm{s}})^{-1}_*T_{1,\mathbb{P}^1})$ for any general effective $\mathbb{Q}$-divisor $T_1\sim_{\mathbb{Q}} T$. 
\end{de}

For fundamental properties of log-twisted pairs, see Appendix \ref{appendices}.

\subsection{Lc-trivial fibrations}\label{subsec-lc-triv}
We shall recall the canonical bundle formula (cf, \cite{FM2}, \cite{A} and \cite{FG}).
 
\begin{de}[Klt-trivial fibrations]\label{ambro model}
Let $S$ be a quasi-projective normal variety.
Let $f:X\to C$ be a proper surjective morphism of normal varieties projective over $S$ such that $f_*\mathcal{O}_X\cong\mathcal{O}_C$.
$K(X)$ denotes the sheaf of quotient fields of $X$.
For any $\mathbb{Q}$-divisor $\Delta$ on $X$,
$f\colon(X,\Delta)\to C$ is called a {\it sublc-trivial fibration} (resp.~{\it subklt-trivial fibration}) if $(X,\Delta)$ is a sublc (resp. subklt) pair satisfying the following properties:
\begin{itemize}
\item $ K_{X}+\Delta\sim_{C,\mathbb{Q}}0$,
\item rank $f_*\mathcal{O}_X(\lceil \mathbf{A}^*(X,\Delta)\rceil)=1$,
\item $(X,\Delta)$ is sublc (resp. subklt) over the generic point of $C$,
\end{itemize} 
where we define $\mathcal{O}_X(\lceil \mathbf{A}^*(X,\Delta)\rceil)$ as follows (cf.~\cite[Lemma 3.22]{fujino-bpf}).
Let $\pi:Y\to X$ be a log resolution of $(X,\Delta)$ and suppose that $K_Y=\pi^*(K_X+\Delta)+\sum_ia_iE_i$ and $\pi_*(\sum_ia_iE_i)=-\Delta$. Then we set
 \[
\mathcal{O}_X(\lceil \mathbf{A}^*(X,\Delta)\rceil)=\pi_*\mathcal{O}_Y(\sum_{a_i\ne-1}\lceil a_i\rceil E_i).
\]
The definition of $\mathcal{O}_X(\lceil \mathbf{A}^*(X,\Delta)\rceil)$ is independent of the choice of $\pi$ and hence $\mathcal{O}_X(\lceil \mathbf{A}^*(X,\Delta)\rceil)$ is coherent.
We say that $f$ is a {\it lc (resp., klt)-trivial fibration} if $(X,\Delta)$ is a lc (resp., klt) pair and $K_{X}+\Delta\sim_{C,\mathbb{Q}}0$. It is easy to see that if $f$ is a lc (resp., klt)-trivial fibration, then $f$ is a sublc (resp., subklt)-trivial fibration. 
We remark that sublc-trivial fibrations are usually called lc-trivial fibrations (e.g.~in \cite{FG}). 
Furthermore, if $f\colon(X,\Delta,H)\to (C,L)$ is a polarized algebraic fiber space pair, then we say that $f\colon(X,\Delta,H)\to (C,L)$ is a {\it polarized lc (resp.~klt)-trivial fibration}. 
\end{de}

 If $f$ is a sublc-trivial fibration, then we set the {\it canonical bundle formula} as follows (cf.~\cite{A}, \cite{FG}). There exist a positive integer $b\in\mathbb{Z}_{>0}$ and a non-zero rational function $\varphi\in K(X)$ such that
\begin{equation}\label{eq--canbdleform}
K_{X}+\Delta+\frac{1}{b}(\varphi)= f^*(K_C+M_{C}+B_{C}),
\end{equation}
where $(\varphi)$ is the principal divisor and $M_{C}$ and $B_{C}$ are $\mathbb{Q}$-divisors uniquely determined as follows. 
$B_C$ is called the {\it discriminant divisor} of $f$ defined by the equation
\[
B_C=\sum_D(1-\mathrm{lct}_{(X_D,\Delta|_{X_D})}(f^*D))D,
\]
where $X_D\subset X$ is a sufficiently small neighborhood of the fiber of the generic point of $D$ and $D$ runs over all prime divisors on $C$.
$M_C$ is defined in the way that (\ref{eq--canbdleform}) holds and called the {\it moduli divisor}. 
We will simply denote the formula (\ref{eq--canbdleform}) by 
$$K_{X}+\Delta\sim_{\mathbb{Q}} f^*(K_C+M_{C}+B_{C})$$
by omitting $\frac{1}{b}(\varphi)$. 
For any birational morphism $h:C'\to C$ of normal varieties projective over $S$, we can take a sublc-trivial fibration $f':(X',\Delta')\to C'$ such that there exists a birational morphism $\tilde{h}\colon X'\to X$ making the following diagram commute
\[\xymatrix{
X' \ar[d]^{f'} \ar[r]^{\tilde{h}} \ar@{}[dr]|\circlearrowleft &X \ar[d]^{f} \\
C' \ar[r]^{h} & C \\
}\]
and satisfying that $K_{X'}+\Delta'=\tilde{h}^*(K_X+\Delta)$ and $\tilde{h}_*\Delta'=\Delta$. Then, we take $M_{C'}$ as the following holds
\[
K_{X'}+\Delta'+\frac{1}{b}(\varphi)= f'^*(K_{C'}+M_{C'}+B_{C'})
\]
and hence $M_C=h_*M_{C'}$, $B_C=h_*B_{C'}$ and $K_{C'}+M_{C'}+B_{C'}=h^*(K_C+M_C+B_C)$. 
In this case, we call $M_{C'}$ (resp.~$B_{C'}$) the moduli (resp.~discriminant) divisor {\it defined} by $M_C$ (resp.~$B_{C}$) and $f$ on $C'$ and one can see that $M_{C'}$ and $B_{C'}$ do not depend on the choice of $f'$.
For any birational map $h'':C''\dashrightarrow C$, we say that a $\mathbb{Q}$-divisor $\mathcal{M}$ on $C''$ is {\it the moduli divisor defined by} $M_C$ and $f$ if the following holds.
Suppose that there exist proper birational morphisms $p\colon C'\to C''$ and $h\colon C'\to C$. 
Let $M_{C'}$ be the moduli divisor defined by $M_C$ and $f$.
Then, $\mathcal{M}=p_*M_{C'}$.
In this situation, we define the following.
\begin{de}[Ambro model]\label{de-ambro}
 We say that $M_C$ is {\it stable under birational base change} if $M_C$ is $\mathbb{Q}$-Cartier and
\[
M_{C'}=h^*M_C
\]
for any birational morphism $h:C'\to C$, where $M_{C'}$ is the moduli divisor defined by $M_C$ and $f$.
Then, we call $C$ an {\it Ambro model} for $f$. If moreover, $M_C$ is semiample, we call $C$ a {\it good Ambro model} in this paper. Thanks to \cite[Theorem 3.6]{FG}, there exists an Ambro model $C'$ dominating $C$ for any sublc-trivial fibration and then the moduli divisor is nef. For subklt-trivial fibrations, this fact was originally proved in \cite{Am}.

It immediately follows from the same argument of \cite[\S 5]{A} that if $f:(X,\Delta)\to C$ is a sublc-trivial fibration over a curve $C$, then $C$ is an Ambro model.

Let $f\colon(X,\Delta)\to C$ be an lc (resp.~klt)-trivial fibration with ample $\mathbb{Q}$-divisors $H$ and $L$ on $X$ and $C$ respectively.
Then we call $f\colon(X,\Delta,H)\to (C,L)$ a {\it polarized lc (resp.~klt)-trivial fibration}.
\end{de}

\begin{rem}
A sublc-trivial fibration $f:(X,\Delta)\to C$ is usually called an Ambro model if $f$ is an Ambro model in the sense of this paper and $(C,B_C)$ is a log smooth subpair (cf.~\cite{FL}). 
In this paper, we do not assume that the discriminant divisor $B_C$ is snc or $C$ is smooth.

    It is well-known to experts that $\mathbf{A}^*(X,\Delta)$, $\{M_C\}_C$ and $\{B_C\}_C$ are naturally considered to be {\em b}-divisors.
\end{rem}

The following conjecture is still open in general setting.
 
\begin{conj}[B-semiampleness conjecture, cf.~{\cite[\S3]{FL}}]
For any sublc-trivial fibration, there exists a good Ambro model.
\end{conj}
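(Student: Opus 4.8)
The plan is to treat this as a research program: reduce the statement, via the Fujino--Mori and Ambro theory, to a positivity question about variations of Hodge structure, and then import the known positivity results for Hodge bundles and for moduli of polarized varieties --- with the honest caveat that in full generality the conjecture remains a deep open problem, so the program below contains one genuine obstruction. \emph{Step 1 (reduction to the Hodge-theoretic moduli part).} Given a sublc-trivial fibration $f:(X,\Delta)\to C$, I would first pass to an Ambro model using \cite{FG}, so that $M_C$ is stable under birational base change and it suffices to produce a single birational model $C'$ of $C$ on which $M_{C'}$ is semiample. Following Fujino--Mori \cite{FM2} (and \cite{A} in the lc case), after a finite base change killing the quasi-unipotent monodromies and a birational modification making the total space log smooth with simple normal crossing discriminant $\Sigma\subset C'$, one reduces to the situation where, for a suitable $b\in\mathbb{Z}_{>0}$, $bM_{C'}$ is the determinant of the Deligne canonical extension of the bottom Hodge piece $\mathcal F^{\mathrm{top}}$ of a polarized $\mathbb{Z}$-variation of Hodge structure on $C'\setminus\Sigma$ with unipotent local monodromies (for lc- rather than klt-trivial fibrations one works with the natural log/mixed Hodge structure attached to the pair). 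So the conjecture becomes: $\det\overline{\mathcal F^{\mathrm{top}}}$ is semiample after possibly a further birational base change.

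\emph{Step 2 (the cases with a good classifying space).} When the period domain $D$ is Hermitian symmetric --- which covers abelian, K3, hyperkähler and cubic-fourfold type fibrations --- I would resolve the period map to a morphism into the Baily--Borel compactification of $\Gamma\backslash D$, on which the automorphic (Hodge) line bundle is ample; its pullback is semiample and equals a positive multiple of $M_{C'}$ up to a nef boundary correction, recovering the known results of Kawamata and Fujino. When the general fibre is of general type, I would instead use the projectivity of the moduli space of stable pairs together with the nefness/semiampleness of the CM line bundle on it (Patakfalvi--Xu, Codogni--Patakfalvi): the moduli part is the pullback of this (semi)ample class, so b-semiampleness follows again. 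The case of relative dimension one, and more generally the case $\dim C=1$, is already settled by work of Prokhorov--Shokurov, Ambro and Fujino--Gongyo, and this will serve as the base case of the induction in Step 3.

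\emph{Step 3 (the general case and the main obstacle).} For a general variation --- e.g. fibres that are general Calabi--Yau threefolds --- $D$ is not Hermitian symmetric, so there is no Baily--Borel compactification and no moduli space carrying an a priori semiample polarization. One can still push the period map into a quasi-projective target: by Bakker--Brunebarbe--Tsimerman the image is quasi-projective and the Griffiths line bundle $\bigotimes_p\det\mathcal F^p$ is ample on it. The difficulty is that the moduli part is the single bottom piece $\det\mathcal F^{\mathrm{top}}$, not the Griffiths combination, and while $\det\overline{\mathcal F^{\mathrm{top}}}$ is known to be \emph{nef} (Fujita, Kawamata, Fujino--Fujisawa), the passage from nef to semiample for the moduli $\mathbb{Q}$-$b$-divisor --- an ``abundance for the moduli part'' --- is exactly the crux that remains open. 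My plan here would be to induct on $\dim C$, restricting $C'$ to general very ample sections (and controlling how the induced sublc-trivial fibrations vary in families), and to analyze a fixed-part/Zariski decomposition of the moduli $b$-divisor using the curvature positivity of the Hodge metric together with the precise degeneration asymptotics (limiting mixed Hodge structures, the nilpotent orbit and $SL_2$-orbit theorems). I expect this last point --- actually producing global sections of $mM_{C'}$ rather than merely a positive closed current --- to be the real obstacle; it is where all general progress on the conjecture has stalled, which is precisely why the rest of the paper only invokes the conjecture in the cases covered by Steps 1 and 2.
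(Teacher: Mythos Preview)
The paper does not prove this statement. It is explicitly stated there as an open conjecture (the line immediately preceding it reads ``The following conjecture is still open in general setting''), and the paper only invokes the special cases it needs via Lemma \ref{abund} and the curve case. So there is no proof in the paper to compare your proposal against.

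Your proposal is, as you yourself acknowledge, not a proof but a survey of the known reductions and partial results together with an honest identification of the open obstruction (passing from nefness to semiampleness of $\det\overline{\mathcal F^{\mathrm{top}}}$ when the period domain is not Hermitian symmetric). That is an accurate assessment of the state of the art, but it is not a proof of the conjecture, and it should not be presented as one: Step 3 contains no new idea for overcoming the abundance-type obstacle you correctly isolate, and the inductive scheme you sketch (restricting to hyperplane sections of the base) does not by itself produce global sections of $mM_{C'}$ from sections on a general curve in $C'$ --- that is exactly the gap the conjecture is about. In short, your write-up is a reasonable research outline, but the statement remains open and the paper treats it as such.
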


When the base $C$ is a smooth curve, we know that $C$ is a good Ambro model (cf.~\cite[Theorem 0.1]{A}).
We are particularly interested in Ambro models such that the bases are log-twisted Fano and we can show the following in such cases,
 
\begin{lem}\label{abund}
Let $f:(X,\Delta)\to C$ be a klt-trivial fibration and $M$ and $B$ be the moduli divisor and the discriminant divisor on $C$ respectively. If $C$ is an Ambro model and $-(K_C+B+M)$ is big and nef, then $C$ is also a good Ambro model.
\end{lem}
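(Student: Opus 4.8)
The content of the lemma is exactly that the moduli divisor $M=M_C$ is semiample on $C$, since by definition a good Ambro model is an Ambro model whose moduli divisor is semiample. The plan is to observe that $M$ is automatically nef, to use the hypothesis to put the base in a (generalised) log Fano situation, and then to invoke standard consequences of the MMP (a base-point-free theorem, equivalently the Mori dream space property of Fano type varieties) to upgrade nefness of $M$ to semiampleness.

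First I would record that $M$ is nef (and abundant) on $C$. By the canonical bundle formula of Ambro and Fujino--Gongyo (\cite{A}, \cite{FG}), the moduli $\mathbb{Q}$-$b$-divisor of a sublc-trivial fibration is $b$-nef and abundant; since $C$ is an Ambro model this $b$-divisor has trace $M_C$ on $C$, and $M_{C'}=h^*M_C$ for every birational $h\colon C'\to C$, so passing to a higher model on which the trace is nef and using that nefness descends under proper birational morphisms shows that $M_C$ is nef, and similarly abundant. I would also note that the coefficients of $B$ lie in $(-\infty,1)$: because $(X,\Delta)$ is subklt, each $\mathrm{lct}_{(X_D,\Delta)}(f^*D)$ is positive, hence $1-\mathrm{lct}_{(X_D,\Delta)}(f^*D)<1$; moreover, when $\Delta$ is effective a log discrepancy computation along the fibre components forces these coefficients into $[0,1)$, so that $(C,B)$ is then an honest klt pair.

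Next I would reduce the two alternatives in the hypothesis to one. If $-(K_C+B+M)$ is big and nef, then, since $M$ is nef and the sum of a big-and-nef class with a nef class is again big and nef, the class $-(K_C+B)=-(K_C+B+M)+M$ is also big and nef; so it suffices to treat the case in which $-(K_C+B)$ is big and nef. In that case $M-(K_C+B)$ is the sum of the nef divisor $M$ and the big-and-nef divisor $-(K_C+B)$, hence big and nef, and $(C,B)$ is (sub)klt. I would then conclude that $M$ is semiample, either by the Kawamata--Shokurov base-point-free theorem (\cite{KoMo}) applied with $D=M$, or, when $(C,B)$ is an effective klt pair, by noting that $(C,B)$ is of Fano type and hence a Mori dream space by the work of Birkar--Cascini--Hacon--McKernan, on which every nef $\mathbb{Q}$-Cartier $\mathbb{Q}$-divisor is semiample. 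Either way $M$ is semiample, so $C$ is a good Ambro model.

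The step I expect to be the main obstacle is dealing with the possible non-effectivity of $B$: for a general subklt-trivial fibration the coefficient $1-\mathrm{lct}_{(X_D,\Delta)}(f^*D)$ is negative precisely when $(X,\Delta+f^*D)$ is klt near the fibre over $D$, so $(C,B)$ is genuinely only a sub-pair and ``Fano type'' does not apply verbatim. I would handle this either by appealing to a base-point-free theorem valid for subklt pairs (e.g.\ in the generality of Fujino's fundamental theorems), or by exploiting the bigness of $-(K_C+B)$ to replace $B$ by an honest boundary $B'\ge 0$ with $(C,B')$ klt and $-(K_C+B')$ still big and nef while leaving $C$ and the numerical class of $M$ unchanged; a third option is to run the whole argument in the language of generalised pairs, treating $(C,B,\mathbf{M})$ as a generalised log Fano pair in the $-(K_C+B+M)$ case and quoting the semiampleness of nef $\mathbb{Q}$-divisors there. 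Once this technical point is settled, the remaining verifications (that $M$ stays $\mathbb{Q}$-Cartier and nef, and that the base is unchanged) are routine.
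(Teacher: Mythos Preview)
Your approach is essentially the same as the paper's: both arguments note that $M$ is nef (via \cite{Am}), reduce the $-(K_C+B+M)$ hypothesis to the $-(K_C+B)$ hypothesis using nefness of $M$, and then apply the base-point-free theorem \cite[Theorem 3.3]{KoMo} once $(C,B)$ is known to be klt. The only substantive difference is that you flag the possible non-effectivity of $B$ as an obstacle and outline several workarounds, whereas the paper resolves this in a single stroke by citing \cite[Theorem 3.1]{A}: Ambro's inverse of adjunction for the canonical bundle formula already gives that $(C,B)$ is klt as a pair under the subklt-trivial hypothesis, so no ad hoc modification of $B$ or passage to generalised pairs is needed. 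Your Mori dream space alternative is valid but unnecessary here.
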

 
\begin{proof}
Thanks to \cite{Am}, $M$ is nef. Therefore, if $-(K_C+B+M)$ is big and nef, then so is $-(K_C+B)$. On the other hand, $(C,B)$ is klt due to \cite[Theorem 3.1]{A}. Thus, $M$ is semiample by \cite[Theorem 3.3]{KoMo}.
\end{proof}

Next, we recall the notion of smooth elliptic surfaces.

\begin{de}
    Let $f\colon X\to C$ be a contraction such that $K_X\sim_{\mathbb{Q},C}0$.
    If $C$ is a proper smooth curve and $X$ is a smooth surface, then we call $f\colon X\to C$ (or $X$) a smooth {\it elliptic surface}.
    If $X$ is further birational to $\mathbb{P}^2$, then we call $X$ a rational elliptic surface.
\end{de}

Let $f:X\to C$ be a smooth elliptic surface. 
Thanks to Kodaira's canonical bundle formula (cf., \cite{FM2}), we have
\[
K_{X}\sim_{\mathbb{Q}}f^*\left(K_C+\frac{1}{12}J^*\mathcal{O}_{\mathbb{P}^1}(1)+\sum_{Q\in C} (1-\mathrm{lct}_{X}(f^*(Q)))(Q)\right),
\]
where $J:C\to \mathbb{P}^1$ is the {\it functional invariant} (cf., \cite[V.8]{BHPV}).
That is, $\frac{1}{12}J^*\mathcal{O}_{\mathbb{P}^1}(1)$ is the moduli divisor of $f$.
Kodaira classified singular fibers of elliptic surfaces and we follow the notations of \cite[V. \S7]{BHPV}. 
Let $F$ be a fiber of $f$. 
Then the following hold.
\begin{itemize}
\item if $F$ is a smooth or $I_N$-type fiber for some $N\ge 0$, then $\mathrm{lct}_X(F)=1$.
\item if $F$ is a $II$-type fiber, then $\mathrm{lct}_X(F)=\frac{5}{6}$.
\item if $F$ is a $III$-type fiber, then $\mathrm{lct}_X(F)=\frac{3}{4}$.
\item if $F$ is a $IV$-type fiber, then $\mathrm{lct}_X(F)=\frac{2}{3}$.
\item if $F$ is an $I_N^*$-type fiber for some $N\ge 0$, then $\mathrm{lct}_X(F)=\frac{1}{2}$.
\item if $F$ is a $II^*$-type fiber, then $\mathrm{lct}_X(F)=\frac{1}{6}$.
\item if $F$ is a $III^*$-type fiber, then $\mathrm{lct}_X(F)=\frac{1}{4}$.
\item if $F$ is a $IV^*$-type fiber, then $\mathrm{lct}_X(F)=\frac{1}{3}$.
\item if $F$ is a multiple fiber with multiplicity $m$, i.e.~$_mI_N$-type for some $N\ge0$, then $\mathrm{lct}_X(F)=\frac{1}{m}$.
\end{itemize}

Let $f:X\to C$ be a smooth rational elliptic surface.
It is well-known that then $C=\mathbb{P}^1$ and $X$ is isomorphic to a blow up of $\mathbb{P}^2$ at nine points (cf., \cite[Theorem 5.6.1]{CD}). 
Moreover, $X$ has at most one multiple fiber (cf.~\cite[Proposition 3.23]{FM}). 
\begin{de}\label{de-mult}
Let $f\colon X\to\mathbb{P}^1$ be a rational elliptic surface.
We set the {\it multiplicity} $m(X)$ of $X$ as follows.
If $X$ has a $_mI_N$-type fiber for some $m\ge2$, we set $m(X)=m$. Otherwise, we set $m(X)=1$.
\end{de}
By \cite[Proposition 3.23]{FM}, we see that $C=\mathbb{P}^1$ and $\mathrm{deg}\,(M+B)=\frac{2m(X)-1}{m(X)}$ if $M$ (resp.~$B$) is the moduli (resp.~discriminant) divisor on $\mathbb{P}^1$. 
Furthermore, $(\mathbb{P}^1,B,M,-(K_{\mathbb{P}^1}+M+B))$ is a log-twisted Fano pair. 


As klt-trivial fibrations, if $H$ is an $f$-ample divisor on a rational elliptic surface (resp.~rational Weierstrass fibration) $f\colon X\to\mathbb{P}^1$, we say that $f\colon(X,H)\to\mathbb{P}^1$ is a polarized rational elliptic surface (resp.~polarized rational Weierstrass fibration).
We always regard $f\colon(X,H)\to\mathbb{P}^1$ as a polarized klt-trivial fibration $f\colon(X,0,H)\to(\mathbb{P}^1,\mathcal{O}_{\mathbb{P}^1}(1))$.

\section{Criterion for J-stability without test configurations}\label{subsec-j-st}

In this section, we prove Theorem \ref{thm--first--ingredient} and Theorem \ref{dd} for the case when $K_X+\Delta$ is nef.
For this, we partially extend the result of \cite{C} on J-stability to the singular case.
We note that there is a quite useful criterion \cite[Theorem 1.1]{C} for J$^H$-stability without using test configurations for projective manifolds when $H$ is ample.
First, we recall the following notions (cf.~ \cite[Definition 1.1]{S}).

\begin{de}\label{de-j-positive}
    Let $(X,L)$ be a polarized variety of dimension $n$ and $H$ an $\mathbb{R}$-line bundle on $X$.
    We say that $(X,L)$ is J$^H$-{\it nef} if for any $p$-dimensional subvariety $V\subset X$, where $1\le p\le n$, 
   \begin{equation}\label{eq-j-nef}
   \left(n\frac{H\cdot L^{n-1}}{L^n}L-pH\right)\cdot L^{p-1}\cdot V\ge 0.
   \end{equation}
    If there exists $\epsilon>0$ such that $(X,L)$ is J$^{H-\epsilon L}$-nef, then we say that $(X,L)$ is uniformly J$^H$-positive.
   We note that the J$^{H-\epsilon L}$-nefness is equivalent to that
    \begin{equation}\nonumber
   \left(n\frac{H\cdot L^{n-1}}{L^n}L-pH\right)\cdot L^{p-1}\cdot V\ge \epsilon(n-p)L^p\cdot V
   \end{equation}
    for any subvariety $V\subset X$ of dimension $1\le p\le n-1$ since it holds that \begin{equation}\label{eq-j-pos}
        \left(n\frac{H_s\cdot L^{n-1}}{L^n}L-pH_s\right)\cdot L^{p-1}\cdot V=\left(n\frac{H\cdot L^{n-1}}{L^n}L-pH+s(n-p)L\right)\cdot L^{p-1}\cdot V,
    \end{equation}
    where we set $H_s:=H+sL$ for $s\in\mathbb{R}$.
\end{de}

The following is the partial generalization of \cite[Theorem 1.1]{C} for the singular case.

\begin{thm}[cf.~{\cite[Theorem 1.1]{C}}]\label{thm-jst}
    Let $(X,L)$ be an $n$-dimensional polarized normal variety with an ample (resp.~nef) $\mathbb{Q}$-line bundle $H$.
    Then $(X,L)$ is uniformly $\mathrm{J}^H$-stable (resp.~$\mathrm{J}^H$-semistable) if and only if $(X,L)$ is uniformly $\mathrm{J}^H$-positive (resp.~$\mathrm{J}^H$-nef).
\end{thm}
This theorem was first shown by \cite{C} for K\"{a}hler manifolds, and \cite{DP} and \cite{S} showed that uniform $\mathrm{J}^H$-positivity is equivalent to that
\[
\left(n\frac{H\cdot L^{n-1}}{L^n}L-pH\right)\cdot L^{p-1}\cdot V>0
\]
for any subvariety $V\subset X$.
To show Theorem \ref{thm-jst}, we prepare the following lemma.

\begin{lem}\label{lem-j-nef}
Let $(X,L)$ be a polarized normal variety of dimension $n$ and let $H$ be a $\mathbb{Q}$-line bundle.
If $(X,L)$ is $\mathrm{J}^H$-semistable, then $(X,L)$ is $\mathrm{J}^H$-nef.
\end{lem}

\begin{proof}
It suffices to show that if $(X,L)$ is not $\mathrm{J}^H$-nef, then $(X,L)$ is $\mathrm{J}^H$-unstable.
Suppose that $V$ is a $p$-dimensional subvariety such that
\[
\left(n\frac{H\cdot L^{n-1}}{L^n}L-pH\right)\cdot L^{p-1}\cdot V< 0.
\]
Let $\rho\colon\mathcal{X}\to X_{\mathbb{P}^1}$ be the normalization of the blow up along $V\times\{0\}$ and let $E$ be the inverse image of $V\times\{0\}$.
Let $\hat{X}$ be the strict transform of $X\times\{0\}$ and note that $E+\hat{X}=\mathcal{X}_0$ as Cartier divisors.
We see that the natural morphism $\pi\colon\hat{X}\to X$ is the blow up along $V$ and $\pi_*(D^{k})=0$ as a cycle for $k<n-p$, where $D=E|_{\hat{X}}$.
Note also that $\sigma^*L_{\mathbb{P}^1}^{n+1}=\sigma^*L_{\mathbb{P}^1}^{n}\cdot \sigma^*H_{\mathbb{P}^1}=0$.
Thus, we see as \cite[Lemma 7.4]{BHJ} that
\[
(\sigma^*L_{\mathbb{P}^1}-\delta E)^{n+1}=-\delta\sum_{j=0}^nE\cdot(\sigma^*L_{\mathbb{P}^1}-\delta E)^{j}\cdot \sigma^*L_{\mathbb{P}^1}^{n-j}=\delta\sum_{j=0}^n((\pi^*L-\delta D)^{j}\cdot \pi^*L^{n-j})-(L^n))
\]
for any sufficiently small rational $\delta>0$.
Similarly, we have that
\[
\sigma^*H_{\mathbb{P}^1}\cdot(\sigma^*L_{\mathbb{P}^1}-\delta E)^{n}=\delta\sum_{j=0}^{n-1}(\pi^*H\cdot(\pi^*L-\delta D)^{j}\cdot \pi^*L^{n-1-j}-(H\cdot L^{n-1})).
\]
Then, we have that
\begin{align*}
&(L^n)(\mathcal{J}^H)^\mathrm{NA}(\mathcal{X},\sigma^*L_{\mathbb{P}^1}-\delta E)\\
&= \delta \pi^*H\cdot \left(\sum_{i=0}^{n-1} (\pi^*L-\delta D)^i\cdot \pi^*L^{n-i-1}\right) -\delta \frac{nH\cdot L^{n-1}}{(n+1)L^n}\sum_{i=0}^{n} (\pi^*L-\delta D)^i \cdot \pi^*L^{n-i} \\
&= \frac{n!\delta^{n-p+1}}{(n-p+1)!p!}(-1)^{n-p-1}  D^{n-p}\cdot \left(n\frac{H\cdot L^{n-1}}{L^n}\pi^*L^p-p\pi^*H\cdot \pi^*L^{p-1}\right)+O(\delta^{n-p+2}).
\end{align*}
By \cite[\S4.3]{F}, there exists a positive constant $e>0$ such that $eV=(-1)^{n-p-1}\pi_*(D^{n-p})$ as a cycle. Thus,
\[
(-1)^{n-p-1}D^{n-p}\cdot\left(n\frac{H\cdot L^{n-1}}{L^n}\pi^*L^p-p\pi^*H\cdot \pi^*L^{p-1}\right)=e\left(n\frac{H\cdot L^{n-1}}{L^n}L^p-pH\cdot L^{p-1}\right)\cdot V<0.
\]
Therefore, $(\mathcal{J}^H)^\mathrm{NA}(\mathcal{X},\sigma^*L_{\mathbb{P}^1}-\delta E)<0$ for any sufficiently small $\delta>0$ and this means that $(X,L)$ is J$^H$-unstable.
\end{proof}

\begin{proof}[Proof of Theorem \ref{thm-jst}]
We make use of \cite[Theorem 1.1]{C}.
First,
we note that for any normal semiample test configuration $(\mathcal{X},\mathcal{L})$ for $(X,L)$, $(\mathcal{J}^{H})^{\mathrm{NA}}(\mathcal{X},\mathcal{L})$ is linear with respect to $H$.
Thus, it is easy to see that uniform J$^{H}$-stability is equivalent to J$^{H{-\epsilon}L}$-semistability for any sufficiently small $\epsilon>0$.
To show Theorem \ref{thm-jst}, we assert that it suffices to show that J$^H$-nefness implies that J$^H$-semistability when $H$ is nef.
Indeed, we see that J$^H$-semistability implies that J$^H$-nefness by Lemma \ref{lem-j-nef}.
Thus, uniform J$^H$-stability implies that uniform J$^H$-positivity.
On the other hand, if $H$ is ample, $(X,L)$ is uniformly J$^H$-positive and the first assertion holds for $(X,L)$, we see that there exists $\epsilon>0$ such that $H-\epsilon L$ is nef and $(X,L)$ is $\mathrm{J}^{H-\epsilon L}$-nef.
Then, we see that $(X,L)$ is $\mathrm{J}^{H-\epsilon L}$-semistable and hence uniformly $\mathrm{J}^{H}$-stable.

When $X$ is smooth, it follows from \cite[Theorem 1.1]{C} that $(X,L)$ is uniformly $\mathrm{J}^H$-stable if and only if $(X,L)$ is uniformly $\mathrm{J}^H$-positive (cf.~Definition \ref{de-j-positive}).
In this situation, we assert that if $(X,L)$ is $\mathrm{J}^H$-nef and $H$ is nef, then $(X,L)$ is $\mathrm{J}^H$-semistable. 
Then we see that $(X,L)$ is uniformly $\mathrm{J}^{H+\epsilon L}$-positive for any $\epsilon>0$ and hence uniformly $\mathrm{J}^{H+\epsilon L}$-stable.
Thus, $(X,L)$ is $\mathrm{J}^H$-semistable.

Next, we deal with the case when $X$ is not smooth.
By what we stated in the first paragraph, it suffices to show that J$^H$-nefness implies J$^H$-semistability of $(X,L)$.
Let $\pi\colon Y\to X$ be a resolution of singuralities and take an ample divisor $A$ on $Y$.
Let $(\mathcal{X},\mathcal{L})$ be a semiample test configuration for $(X,L)$ with a canonical morphism $\rho\colon\mathcal{X}\to X_{\mathbb{P}^1}$ and $\mathcal{Y}\subset\mathcal{X}\times_{X}Y$ be the normalization of the irreducible component dominant to $\mathcal{X}$.
Let $\sigma\colon\mathcal{Y}\to\mathcal{X}$ and $\rho_{\mathcal{Y}}\colon\mathcal{Y}\to Y_{\mathbb{P}^1}$ be the canonical morphisms.
Then it is easy to see that $(\mathcal{Y},\sigma^*\mathcal{L}+\epsilon\rho_{\mathcal{Y}}^*A_{\mathbb{P}^1})$ is also a semiample test configuration for $(Y,\pi^*L+\epsilon A)$ for any rational number $\epsilon>0$.
Set $H_{\delta,\epsilon}:=\pi^*H+\delta(\pi^*L+\epsilon A)$ and $L_{\epsilon}:=\pi^*L+\epsilon A$ for $\delta>0$.
We know that there exists a positive constant $C$ such that 
   \begin{equation}
   \left|n\frac{\pi^*H\cdot L_{\epsilon}^{n-1}}{L_{\epsilon}^n}-n\frac{H\cdot L^{n-1}}{L^n}\right|\le C\epsilon.\label{eq--C--estimate}
   \end{equation}
Since
\begin{equation}\label{eq--j-limit}
\lim_{\epsilon\to0}(\mathcal{J}^{\pi^*H+C\epsilon(\pi^*L+\epsilon A)})^{\mathrm{NA}}(\mathcal{Y},\sigma^*\mathcal{L}+\epsilon\rho_{\mathcal{Y}}^*A_{\mathbb{P}^1})=(\mathcal{J}^{H})^{\mathrm{NA}}(\mathcal{X},\mathcal{L}),
\end{equation}
it suffices to show that 
\begin{equation}\label{eq-j-nef-modif}
   \left(n\frac{H_{C\epsilon,\epsilon}\cdot L_{\epsilon}^{n-1}}{L_{\epsilon}^n}L_{\epsilon}-pH_{C\epsilon,\epsilon}\right)\cdot L_{\epsilon}^{p-1}\cdot W\ge 0
   \end{equation}
    for any small $\epsilon>0$ and subvariety $W\subset Y$ of dimension $1\le p\le n-1$.
    For this, we note by (\ref{eq-j-pos}) and (\ref{eq--C--estimate}) that 
    \begin{align*}
        &\left(n\frac{H_{C\epsilon,\epsilon}\cdot L_{\epsilon}^{n-1}}{L_{\epsilon}^n}L_{\epsilon}-pH_{C\epsilon,\epsilon}\right)\cdot L_{\epsilon}^{p-1}\cdot W\\
        &=\left(n\frac{\pi^*H\cdot L_{\epsilon}^{n-1}}{L_{\epsilon}^n}L_{\epsilon}-p\pi^*H\right)\cdot L_{\epsilon}^{p-1}\cdot W+C\epsilon(n-p)L_{\epsilon}^{p}\cdot W\\
        &=\left(n\frac{H\cdot L^{n-1}}{L^n}L_{\epsilon}-p\pi^*H\right)\cdot L_{\epsilon}^{p-1}\cdot W+\left(C\epsilon(n-p)+n\frac{\pi^*H\cdot L_{\epsilon}^{n-1}}{L_{\epsilon}^n}-n\frac{H\cdot L^{n-1}}{L^n}\right)L_{\epsilon}^{p}\cdot W\\
        &\ge\left(n\frac{H\cdot L^{n-1}}{L^n}L_{\epsilon}-p\pi^*H\right)\cdot L_{\epsilon}^{p-1}\cdot W.
    \end{align*}
   Furthermore,
   \begin{align*}
       &\left(n\frac{\pi^*H\cdot \pi^*L^{n-1}}{\pi^*L^n}L_{\epsilon}-p\pi^*H\right)\cdot L_{\epsilon}^{p-1}\cdot W\\
       &=\sum_{r}\epsilon^r\left(n\frac{H\cdot L^{n-1}}{L^n}\binom{p}{r}\pi^*L^{p-r}-p\binom{p-1}{r}\pi^*H\cdot \pi^*L^{p-r-1}\right)\cdot A^{r}\cdot  W\\
       &=\sum_{r}\epsilon^r\binom{p}{r}\left(n\frac{H\cdot L^{n-1}}{L^n}L-(p-r)H\right)\cdot L^{p-r-1}\cdot \pi_*(A^{r}\cdot  W).
   \end{align*}
   We can regard $\pi_*(A^{r}\cdot  W)$ as an effective algebraic cycle of dimension $p-r$ in $X$ and then we have
   \[
\left(n\frac{\pi^*H\cdot \pi^*L^{n-1}}{\pi^*L^n}L_{\epsilon}-p\pi^*H\right)\cdot L_{\epsilon}^{p-1}\cdot W\ge0   
   \]
   for any $\epsilon>0$ by the assumption that (\ref{eq-j-nef}) holds.
   Then (\ref{eq-j-nef-modif}) holds and $(Y,L_{\epsilon})$ is J$^{H_{C\epsilon,\epsilon}}$-semistable.
   Hence, we see that $(\mathcal{J}^{\pi^*H+C\epsilon(\pi^*L+\epsilon A)})^{\mathrm{NA}}(\mathcal{Y},\sigma^*\mathcal{L}+\epsilon\rho_{\mathcal{Y}}^*A_{\mathbb{P}^1})\ge0$ and conclude that Theorem \ref{thm-jst} holds by (\ref{eq--j-limit}).
\end{proof}
 We will make use of the following fact.
\begin{rem}\label{rem--useful}
Let $(X,L)$ be an $n$-dimensional polarized variety.
It is easy to see that if $H$ and
    \[
    n\frac{H\cdot L^{n-1}}{L^n}L-(n-1)H
    \] are nef, then $(X,L)$ is J$^H$-semistable by Theorem \ref{thm-jst}.
\end{rem}

Combining Theorems \ref{thm-jst} and \ref{bhjz}, we obtain Theorem \ref{thm--first--ingredient}, which is one of the key ingredients to show Theorem \ref{dd}.



\begin{proof}[Proof of Theorem \ref{thm--first--ingredient}]
    By Theorem \ref{bhjz}, we have that for any normal semiample test configuration $(\mathcal{X},\mathcal{L})$ for $(X,L)$, 
    \[
    H^{\mathrm{NA}}_\Delta(\mathcal{X},\mathcal{L})\ge \delta_{(X,\Delta)}(L)(\mathcal{J}^{L})^{\mathrm{NA}}(\mathcal{X},\mathcal{L}).
    \]
    This means that
    \[
    M^{\mathrm{NA}}_\Delta(\mathcal{X},\mathcal{L})\ge (\mathcal{J}^{H})^{\mathrm{NA}}(\mathcal{X},\mathcal{L}).
    \]
    We have that $(X,L)$ is uniformly J$^H$-stable by Theorem \ref{thm-jst}.
    Thus, we obtain the assertion.
\end{proof}

Finally, we show Corollary \ref{cor--minimal-stable} by Theorem \ref{thm-jst}.
This implies Theorem \ref{dd} for the case when $K_X+\Delta$ is nef.

\begin{cor}\label{cor--minimal-stable}
Let $(X,\Delta,H)$ be a polarized klt pair such that $K_X+\Delta$ is nef.   
Then for any sufficiently small rational number $\epsilon>0$, $(X,\Delta,\epsilon H+K_X+\Delta)$ is uniformly K-stable.

Furthermore, there exists $\beta>0$ and $\epsilon_0>0$ such that 
\[
M^{\mathrm{NA}}_\Delta(\mathcal{X},\mathcal{L})\ge \beta(\mathcal{J}^{\epsilon H+K_X+\Delta})^{\mathrm{NA}}(\mathcal{X},\mathcal{L})
\]
for any $\epsilon\in\mathbb{Q}\cap(0,\epsilon_0)$ and normal semiample test configuration $(\mathcal{X},\mathcal{L})$ for $(X,L)$.
\end{cor}

\begin{proof}
Put $n$ as the dimension of $X$.
First, we show the following claim.
\begin{claim}
    There exists a positive constant $C>0$ depending only on the intersection numbers $(K_X+\Delta)^i\cdot H^{n-i}$ for $0\le i\le n$ such that $(X,\epsilon H+K_X+\Delta)$ is $\mathrm{J}^{K_X+\Delta+C\epsilon(\epsilon H+K_X+\Delta)}$-semistable for any sufficiently small rational number $\epsilon>0$.
\end{claim}

\begin{proof}[Proof of Claim]
For any $p$-dimensional subvariety $V\subset X$, consider the following intersection number
\[
\left(n\frac{(K_X+\Delta)\cdot (\epsilon H+K_X+\Delta)^{n-1}}{(\epsilon H+K_X+\Delta)^n}(\epsilon H+K_X+\Delta)-p(K_X+\Delta)\right)\cdot(\epsilon H+K_X+\Delta)^{p-1}\cdot V.
\]
Set $$m:=\max\{l\,|\,\text{$(K_X+\Delta)^l\cdot H^{n-l}\ne0$}\}.$$
Then for any sufficiently small $\epsilon>0$, we have that
\[
n\frac{(K_X+\Delta)\cdot (\epsilon H+K_X+\Delta)^{n-1}}{(\epsilon H+K_X+\Delta)^n}=m+O(\epsilon),
\]
where the error term depends only on the intersection numbers $(K_X+\Delta)^i\cdot H^{n-i}$ for all $0\le i\le n$.
Thus, there exists a positive constant $C$ depending only on the intersection numbers $(K_X+\Delta)^i\cdot H^{n-i}$ such that
\begin{align*}
&\left(n\frac{(K_X+\Delta)\cdot (\epsilon H+K_X+\Delta)^{n-1}}{(\epsilon H+K_X+\Delta)^n}(\epsilon H+K_X+\Delta)-p(K_X+\Delta)\right)\cdot(\epsilon H+K_X+\Delta)^{p-1}\cdot V\\
&\ge((m-p)(K_X+\Delta)+m\epsilon H-C\epsilon(n-p)(\epsilon H+K_X+\Delta))\cdot(\epsilon H+K_X+\Delta)^{p-1}\cdot V.
\end{align*}
By this inequality, we see that if it holds that
\begin{equation}\label{eq--claim--j-cor}
    ((m-p)(K_X+\Delta)+m\epsilon H
    )\cdot(\epsilon H+K_X+\Delta)^{p-1}\cdot V\ge0,
\end{equation}
then $(X,\epsilon H+K_X+\Delta)$ is $\mathrm{J}^{K_X+\Delta+C\epsilon(\epsilon H+K_X+\Delta)}$-semistable by Theorem \ref{thm-jst} and (\ref{eq-j-pos}).
Therefore, we only have to show (\ref{eq--claim--j-cor}) for any sufficiently small $\epsilon>0$.
Set 
\[
\nu:=\max\{l\,|\,\text{$(K_X+\Delta)^l\cdot H^{p-l}\cdot V\ne0$}\}.
\]
It is not hard to see that $\nu\le\min\{m,p\}$.
Then, we have that 
\begin{align*}
    &((m-p)(K_X+\Delta)+m\epsilon H
    )\cdot(\epsilon H+K_X+\Delta)^{p-1}\cdot V\\
    &\ge m\epsilon^pH\cdot V+\sum_{i=1}^\nu\epsilon^{p-i}\left((m-p)\binom{p-1}{p-i}+m\binom{p-1}{p-i-1}\right)(K_X+\Delta)^i\cdot H^{p-i}\cdot V\\
    &=\sum_{i=0}^\nu\epsilon^{p-i}(m-i)\binom{p}{i}(K_X+\Delta)^i\cdot H^{p-i}\cdot V\ge0.
\end{align*}
Thus, we obtain (\ref{eq--claim--j-cor}) and complete the proof of Claim.
\end{proof}
Next, we show that Claim implies Corollary \ref{cor--minimal-stable}.
Let $\alpha:=\alpha_{(X,\Delta)}(H+K_X+\Delta)>0$.
We know by \cite[Proposition 2.1]{Fjtb} that 
\begin{align*}
\frac{n}{n+1}\delta_{(X,\Delta)}(\epsilon H+K_X+\Delta)\ge \alpha_{(X,\Delta)}(\epsilon H+K_X+\Delta)\ge\alpha
\end{align*}
Thus, we see that 
for any $\epsilon\in\mathbb{Q}\cap(0,\frac{(n+1)\alpha}{nC})$, $(X,\Delta,\epsilon H+K_X+\Delta)$ is uniformly K-stable by Claim and Theorem \ref{thm--first--ingredient}.
Furthermore, we have
\[
M^{\mathrm{NA}}_\Delta(\mathcal{X},\mathcal{L})\ge \frac{(n+1)\alpha}{2nC}(\mathcal{J}^{\epsilon H+K_X+\Delta})^{\mathrm{NA}}(\mathcal{X},\mathcal{L})
\]
for any $\epsilon\in\mathbb{Q}\cap(0,\frac{(n+1)\alpha}{2nC})$ and normal semiample test configuration $(\mathcal{X},\mathcal{L})$ for $(X,L)$.
\end{proof}

\section{Adiabatic K-stability versus log-twisted K-stability}\label{sec-3}

\subsection{Adiabatic K-stability implies log-twisted K-stability}\label{unstadiab}
In this subsection, we see that log-twisted K-stability of the base of a klt (lc)-trivial fibration (cf.~Definition \ref{ambro model}) is a necessary condition for adiabatic K-stability under some conditions (cf.~Theorems \ref{can} and \ref{stp1}).

First, we show the following stronger theorem than Theorem \ref{ff}.

\begin{thm}\label{can}
Let $f:(X,\Delta,H)\to (C,L)$ be a polarized lc-trivial fibration.
Let $M$ (resp.~$B$) be the moduli (resp.~discriminant) divisor. 

If $M$ is $\mathbb{Q}$-Cartier and $(C,B,M,L)$ is not log-twisted K-semistable, then $(X,\Delta,H)$ is adiabatically K-unstable over $(C,L)$.
\end{thm}


To show Theorem \ref{can}, we first prepare the following proposition on MMP.

\begin{prop}\label{model}
Let $f:(X,\Delta,H)\to (C,L)$ be a polarized lc-trivial fibration, $\pi:Y\to X$ be a log resolution of $(X,\Delta)$ and $\Theta:=\pi^{-1}_*\Delta+\mathrm{Ex}(\pi)$ be an snc $\mathbb{Q}$-divisor on $Y$. 
Let $\mathcal{Y}$ be a smooth test configuration for $Y$ dominating $Y_{\mathbb{P}^1}$ such that $(\rho_{\mathcal{Y}})^{-1}_*\Theta_{\mathbb{P}^1}+\mathcal{Y}_{0}$ is an snc divisor, where the canonical birational map $\rho_{\mathcal{Y}}\colon\mathcal{Y}\to Y_{\mathbb{P}^1}$ is a morphism. 
Let $\mathscr{C}$ be a normal test configuration for $C$ such that there exists a canonical birational morphism $\rho_{\mathscr{C}}\colon\mathscr{C}\to C_{\mathbb{P}^1}$. 
Suppose that the canonical rational map $\Pi\colon\mathcal{Y}\to\mathscr{C}$ is a morphism.

Then, there exist $\epsilon_0>0$ and two $\mathbb{G}_m$-equivariant birational contractions $h\colon\mathcal{Y}\dashrightarrow \mathcal{W}$ and $\xi:\mathcal{W}\dashrightarrow\mathcal{X}$, where $\mathcal{W}$ and $\mathcal{X}$ are normal varieties projective over $\mathscr{C}$, satifying the following.
\begin{itemize}
    \item $(\mathcal{W},\Delta_\mathcal{W})$ is a $\mathbb{Q}$-factorial $\mathbb{G}_m$-equivariant good minimal model of $(\mathcal{Y},(\rho_{\mathcal{Y}})^{-1}_*\Theta_{\mathbb{P}^1})$ over $\mathscr{C}$, where $\Delta_{\mathcal{W}}:=h_*(\rho_{\mathcal{Y}})^{-1}_*\Theta_{\mathbb{P}^1}$,
    \item if $\eta=\xi\circ h$, then $(\mathcal{X},\Delta_{\mathcal{X},\epsilon})$ is the $\mathbb{G}_m$-equivariant relative lc model of $(\mathcal{Y},(\rho_{\mathcal{Y}})^{-1}_*\Theta_{\mathbb{P}^1}+\epsilon(\pi\times\mathrm{id}_{\mathbb{P}^1}\circ\rho_{\mathcal{Y}})^*H_{\mathbb{P}^1})$ over $\mathscr{C}$ for any rational number $\epsilon\in(0,\epsilon_0)$. 
\end{itemize}
Furthermore, fix $\epsilon\in\mathbb{Q}\cap(0,\epsilon_0)$.
Then, there exist $r\in\mathbb{Q}_{>0}$ and an ample $\mathbb{Q}$-line bundle $\mathscr{M}$ on $\mathscr{C}$ such that $(\mathcal{X},\epsilon^{-1}(K_{\mathcal{X}}+\Delta_{\mathcal{X},\epsilon})+\Pi_{\mathcal{X}}^*\mathscr{M})$ is an ample test configuration for $(X,H+rL)$, where $\Pi_{\mathcal{X}}\colon\mathcal{X}\to\mathscr{C}$ is the canonical morphism.
\end{prop}
 
\begin{proof}
Since $H$ is ample, we consider $H$ to be a general effective $\mathbb{Q}$-divisor throughout this proof.
First, we note that $(Y,\Theta)$ has a good minimal model over $C$. 
Indeed, by the proof of the existence of a $\mathbb{Q}$-factorial dlt modification $(Z,\Delta_Z)$ over $X$ (cf.~\cite[4.1]{Fuj}), the $(K_Y+\Theta)$-MMP with ample scaling yields a good minimal model $(Z,\Delta_Z)$ of $(Y,\Theta)$ over $X$. 
Let $g:Z\to X$ be the structure morphism. 
Since $K_Z+\Delta_Z=g^*(K_X+\Delta)$ and $K_X+\Delta\sim_{\mathbb{Q},C}0$, $(Z,\Delta_Z)$ is also a good minimal model of $(Y,\Theta)$ over $C$. 
Hence, $(Z,\Delta_Z+\epsilon g^*H)$ is a good minimal model for any sufficiently small $\epsilon\in\mathbb{Q}_{>0}$. 
 This means that $(\mathcal{Y}\setminus\mathcal{Y}_0,(\rho_{\mathcal{Y}})^{-1}_*\Theta_{\mathbb{P}^1}\setminus\mathcal{Y}_0)\cong (Y\times(\mathbb{P}^1\setminus\{0\}),\Theta\times(\mathbb{P}^1\setminus\{0\}))$ also has a good minimal model over $\mathscr{C}\setminus \mathscr{C}_0\cong C\times(\mathbb{P}^1\setminus\{0\})$. 
Since $(\mathcal{Y},(\rho_{\mathcal{Y}})^{-1}_*\Theta_{\mathbb{P}^1}+\mathcal{Y}_{0,\mathrm{red}})$ is log smooth and dlt, every lc center of the log pair $(\mathcal{Y},(\rho_{\mathcal{Y}})^{-1}_*\Theta_{\mathbb{P}^1})$ meets $\mathcal{Y}\setminus\mathcal{Y}_0$. Therefore, we can apply \cite[Theorem 1.1]{HX} and there exists a good minimal model $(\mathcal{W},\Delta_{\mathcal{W}})$ of $(\mathcal{Y},(\rho_{\mathcal{Y}})^{-1}_*\Theta_{\mathbb{P}^1})$ over $\mathscr{C}$. 
We may assume that the contraction $h\colon\mathcal{Y}\dashrightarrow\mathcal{W}$ is the output of a minimal model program with ample scaling by \cite[Corollary 2.9]{HX}. 
In particular, we may assume that $\mathcal{W}$ is a $\mathbb{Q}$-factorial (cf.~\cite[4.3.5]{fujino-foundation}) and $h$ is $\mathbb{G}_m$-equivariant by the argument of \cite{A2} and \cite{LX} (cf.~Definition \ref{de-g-equiv-model}). 
We obtain the first assertion for $\mathcal{W}$.

We see that there exists $\epsilon'_1>0$ such that every step of the $K_{\mathcal{Y}}+(\rho_{\mathcal{Y}})^{-1}_*\Theta_{\mathbb{P}^1}$-MMP yielding $\mathcal{W}$ is also a divisorial contraction or a flip with respect to $K_{\mathcal{Y}}+(\rho_{\mathcal{Y}})^{-1}_*\Theta_{\mathbb{P}^1}+\epsilon(\pi\times\mathrm{id}_{\mathbb{P}^1}\circ\rho_{\mathcal{Y}})^*H_{\mathbb{P}^1}$ for any rational number $\epsilon\in(0,\epsilon'_1)$.
Thus, $(\mathcal{W},\Delta_{\mathcal{W}}+\epsilon h_*(\pi\times\mathrm{id}_{\mathbb{P}^1}\circ\rho_{\mathcal{Y}})^*H_{\mathbb{P}^1})$ is also dlt and has no lc center contained in $\mathcal{W}_0$ for any $\epsilon\in(0,\epsilon'_1)\cap\mathbb{Q}$ by \cite[Corollary 3.44]{KoMo}.
In this setting, we show the following claim.
\begin{claim}\label{claim-1}
There exists $\epsilon_1\in\mathbb{Q}\cap(0,\epsilon'_1)$ such that $(\mathcal{W},\Delta_{\mathcal{W}}+\epsilon h_*(\pi\times\mathrm{id}_{\mathbb{P}^1}\circ\rho_{\mathcal{Y}})^*H_{\mathbb{P}^1})$ has a good minimal model over $\mathscr{C}$ for any $\epsilon\in\mathbb{Q}\cap(0,\epsilon_1)$.
\end{claim}
\begin{proof}[Proof of Claim]
We note that every lc center of $(\mathcal{W},\Delta_{\mathcal{W}}+\epsilon h_*(\pi\times\mathrm{id}_{\mathbb{P}^1}\circ\rho_{\mathcal{Y}})^*H_{\mathbb{P}^1})$ meets $\mathcal{W}\setminus\mathcal{W}_0$.
By 
\cite[Theorem 1.1]{HX}, it suffices to show that
$(\mathcal{W}\setminus\mathcal{W}_0,(\Delta_{\mathcal{W}}+\epsilon h_*(\pi\times\mathrm{id}_{\mathbb{P}^1}\circ\rho_{\mathcal{Y}})^*H_{\mathbb{P}^1})\setminus\mathcal{W}_0)$ has a good minimal model over $\mathscr{C}\setminus\mathscr{C}_0$ for any $\epsilon\in\mathbb{Q}\cap(0,\epsilon'_1)$.
We first note that $(Z,\Delta_Z+\epsilon g^*H)$ is also a good minimal model of $(Y,\Theta+\epsilon\pi^*H)$ over $C$ for any sufficiently small $\epsilon\in\mathbb{Q}_{>0}$. 
Indeed, if $\psi:Y\dashrightarrow Z$ is the birational contraction, $A_{(Y,\Theta)}(F)<A_{(Z,\Delta_Z)}(F)$ for any $\psi$-exceptional divisor $F$. 
Thus, there exists $0<\epsilon_1<\epsilon_1'$ such
 that $A_{(Y,\Theta+\epsilon\pi^*H)}(F)<A_{(Z,\Delta_Z+\epsilon g^*H)}(F)$ also holds for any $\epsilon\in(0,\epsilon_1)\cap\mathbb{Q}$ and $\psi$-exceptional prime divisor $F$.
Furthermore, $K_Z+\Delta_Z+\epsilon g^*H$ is semiample over $C$. 
This means that $(Z\times(\mathbb{P}^1\setminus\{0\}),(\Delta_Z+\epsilon g^*H)\times(\mathbb{P}^1\setminus\{0\}))$ is a good minimal model of $(\mathcal{Y}\setminus\mathcal{Y}_0,((\rho_{\mathcal{Y}})^{-1}_*\Theta_{\mathbb{P}^1}+\epsilon(\pi\times\mathrm{id}_{\mathbb{P}^1}\circ\rho_{\mathcal{Y}})^*H_{\mathbb{P}^1})\setminus\mathcal{Y}_0)$ for any $\epsilon\in(0,\epsilon_1)\cap\mathbb{Q}$.
We assert that $(Z\times(\mathbb{P}^1\setminus\{0\}),(\Delta_Z+\epsilon g^*H)\times(\mathbb{P}^1\setminus\{0\}))$ is also a good minimal model of $(\mathcal{W}\setminus\mathcal{W}_0,(\Delta_{\mathcal{W}}+\epsilon h_*(\pi\times\mathrm{id}_{\mathbb{P}^1}\circ\rho_{\mathcal{Y}})^*H_{\mathbb{P}^1})\setminus\mathcal{W}_0)$.
Indeed, we note that $\mathcal{W}\setminus\mathcal{W}_0\dashrightarrow Z\times(\mathbb{P}^1\setminus\{0\})$ is isomorphic in codimension one points since they are good minimal models of $(\mathcal{Y}\setminus\mathcal{Y}_0,(\rho_{\mathcal{Y}})^{-1}_*\Theta_{\mathbb{P}^1}\setminus\mathcal{Y}_0)$ over $\mathscr{C}\setminus\mathscr{C}_0$ (cf.~\cite[Theorem 3.52 (2)]{KoMo}). 
Therefore, for any prime divisor $E$ over $\mathcal{Y}\setminus\mathcal{Y}_0$, we have
\[
A_{(\mathcal{W}\setminus\mathcal{W}_0,(\Delta_{\mathcal{W}}+\epsilon h_*(\pi\times\mathrm{id}_{\mathbb{P}^1}\circ\rho_{\mathcal{Y}})^*H_{\mathbb{P}^1})\setminus\mathcal{W}_0)}(E)\le A_{(Z\times(\mathbb{P}^1\setminus\{0\}),(\Delta_Z+\epsilon g^*H)\times(\mathbb{P}^1\setminus\{0\}))}(E)
\] by \cite[Proposition 3.51]{KoMo}.
This shows that $(Z\times(\mathbb{P}^1\setminus\{0\}),(\Delta_Z+\epsilon g^*H)\times(\mathbb{P}^1\setminus\{0\}))$ is also a good minimal model of $(\mathcal{W}\setminus\mathcal{W}_0,(\Delta_{\mathcal{W}}+\epsilon h_*(\pi\times\mathrm{id}_{\mathbb{P}^1}\circ\rho_{\mathcal{Y}})^*H_{\mathbb{P}^1})\setminus\mathcal{W}_0)$  for any $\epsilon\in\mathbb{Q}\cap(0,\epsilon_1)$.
\end{proof}

In this paragraph, we show the following:
There exists a $\mathbb{G}_m$-equivariant birational contraction $\xi:\mathcal{W}\dashrightarrow\mathcal{X}$ 
such that $\mathcal{X}$ is a lc model of $(\mathcal{W},\Delta_{\mathcal{W}}+\epsilon h_*(\pi\times\mathrm{id}_{\mathbb{P}^1}\circ\rho_{\mathcal{Y}})^*H_{\mathbb{P}^1})$ over $\mathscr{C}$ for any $\epsilon\in(0,\epsilon_0)\cap\mathbb{Q}$.
 We know that $(\mathcal{W},\Delta_{\mathcal{W}})$ is a good minimal model over $\mathscr{C}$ and set $$\mathcal{Z}:=\mathbf{Proj}_{\mathscr{C}}(\oplus_{l\ge0}\alpha_*\mathcal{O}_{\mathcal{W}}(lr_0(K_{\mathcal{W}}+\Delta_{\mathcal{W}}))),$$
    where $\alpha\colon\mathcal{W}\to\mathscr{C}$ is the canonical morphism and $r_0\in\mathbb{Z}_{>0}$ satisfies that $r_0(K_{\mathcal{W}}+\Delta_{\mathcal{W}})$ is Cartier. 
We note that $\mathcal{Z}$ has the canonical $\mathbb{G}_m$-action induced by the $\mathbb{G}_m$-action on $\oplus_{l\ge0}\alpha_*\mathcal{O}_{\mathcal{W}}(lr_0(K_{\mathcal{W}}+\Delta_{\mathcal{W}}))$ and that $\mathcal{Z}\setminus\mathcal{Z}_0\cong C\times(\mathbb{P}^1\setminus\{0\})$. 
Then, it follows from Claim that there exists a good minimal model of $(\mathcal{W}\setminus\mathcal{W}_0,(\Delta_{\mathcal{W}}+\epsilon h_*(\pi\times\mathrm{id}_{\mathbb{P}^1}\circ\rho_{\mathcal{Y}})^*H_{\mathbb{P}^1})|_{\mathcal{W}\setminus\mathcal{W}_0})$ over $C\times(\mathbb{P}^1\setminus\{0\})$ for any $\epsilon\in(0,\epsilon_1)\cap\mathbb{Q}$. 
By 
\cite[Theorem 1.1]{HX}, if we fix a rational number $\epsilon'\in(0,\epsilon_0)$, then there exists a good minimal model of $(\mathcal{W},\Delta_{\mathcal{W}}+\epsilon' h_*(\pi\times\mathrm{id}_{\mathbb{P}^1}\circ\rho_{\mathcal{Y}})^*H_{\mathbb{P}^1})$ over $\mathcal{Z}$.
Thus, there exists a birational contraction $\xi\colon\mathcal{W}\dashrightarrow\mathcal{X}$ such that $\mathcal{X}$ is a lc model of $(\mathcal{W},\Delta_{\mathcal{W}}+\epsilon' h_*(\pi\times\mathrm{id}_{\mathbb{P}^1}\circ\rho_{\mathcal{Y}})^*H_{\mathbb{P}^1})$ over $\mathcal{Z}$. 
Hence, $\xi$ is $\mathbb{G}_m$-equivariant.
Set $\Delta_{\mathcal{X},\epsilon}$ as the strict transform of $\Delta_{\mathcal{W}}+\epsilon h_*(\pi\times\mathrm{id}_{\mathbb{P}^1}\circ\rho_{\mathcal{Y}})^*H_{\mathbb{P}^1}$ on $\mathcal{X}$ for any $0\le\epsilon\le\epsilon'$.
Let $q_{\mathcal{X}}\colon\mathcal{X}\to\mathcal{Z}$ and $q_{\mathcal{W}}\colon\mathcal{W}\to\mathcal{Z}$ be the canonical morphisms and $G$ be a relatively ample $\mathbb{Q}$-divisor on $\mathcal{Z}$ over $\mathscr{C}$ such that $q_{\mathcal{W}}^*G\sim_{\mathbb{Q},\mathscr{C}}K_{\mathcal{W}}+\Delta_{\mathcal{W}}$.
Then, we see that $$q_{\mathcal{X}}^*G\sim_{\mathbb{Q},\mathscr{C}}K_{\mathcal{X}}+\xi_*\Delta_{\mathcal{W}}.$$
For any rational number $0<t<1$, we have
\[
K_{\mathcal{X}}+\Delta_{\mathcal{X},t\epsilon'}\sim_{\mathbb{Q},\mathscr{C}}(1-t)q_{\mathcal{X}}^*\mathcal{G}+t(K_{\mathcal{X}}+\Delta_{\mathcal{X},\epsilon'}).
\]
Since $K_{\mathcal{X}}+\Delta_{\mathcal{X},\epsilon'}$ is $q_{\mathcal{X}}$-ample and $G$ is relatively ample over $\mathscr{C}$, there exists $0<t'<1$ such that $K_{\mathcal{X}}+\Delta_{\mathcal{X},t\epsilon'}$ is relatively ample over $\mathscr{C}$ for any $t\in(0,t')\cap\mathbb{Q}$.
Thus, $\mathcal{X}$ is the lc model of $(\mathcal{W},\Delta_{\mathcal{W}}+t\epsilon' h_*(\pi\times\mathrm{id}_{\mathbb{P}^1}\circ\rho_{\mathcal{Y}})^*H_{\mathbb{P}^1})$ over $\mathcal{Z}$ for any $t\in(0,t')\cap\mathbb{Q}$ since $\xi$ is $K_{\mathcal{W}}+\Delta_{\mathcal{W}}+\epsilon' h_*(\pi\times\mathrm{id}_{\mathbb{P}^1}\circ\rho_{\mathcal{Y}})^*H_{\mathbb{P}^1}$-non-positive and $K_{\mathcal{W}}+\Delta_{\mathcal{W}}$-non-positive.
By letting $\epsilon_0:=t'\epsilon'$, we obtain 
$\xi\colon\mathcal{W}\dashrightarrow\mathcal{X}$ and $\epsilon_0$ as we asserted in the first sentence in this paragraph.

By what we have shown in the previous paragraph, we see that $\mathcal{X}$ and $\epsilon_0$ as above satisfy the second assertion of Proposition \ref{model}.
Indeed, since $h$ is $K_{\mathcal{Y}}+(\rho_{\mathcal{Y}})^{-1}_*\Theta_{\mathbb{P}^1}+\epsilon(\pi\times\mathrm{id}_{\mathbb{P}^1}\circ\rho_{\mathcal{Y}})^*H_{\mathbb{P}^1}$-negative, $\eta\colon\mathcal{Y}\dashrightarrow\mathcal{X}$ defines the lc model of $(\mathcal{Y},(\rho_{\mathcal{Y}})^{-1}_*\Theta_{\mathbb{P}^1}+\epsilon(\pi\times\mathrm{id}_{\mathbb{P}^1}\circ\rho_{\mathcal{Y}})^*H_{\mathbb{P}^1})$ for any rational number $\epsilon\in(0,\epsilon_0)$.
Thus, we obtain the second assertion of Proposition \ref{model}.

Finally, we deal with the last assertion of Proposition \ref{model}.
Note that $(X,\Delta+\epsilon H)$ is the lc model of $(Y,\Theta+\epsilon\pi^*H)$ over $C$ for any sufficiently small rational number $\epsilon>0$.
Fix such an $\epsilon$.
By the fact that $(\mathcal{X},\Delta_{\mathcal{X},\epsilon})$ is the lc model of $(\mathcal{Y},(\rho_{\mathcal{Y}})^{-1}_*\Theta_{\mathbb{P}^1}+\epsilon(\pi\circ\rho_{\mathcal{Y}})^*H_{\mathbb{P}^1})$ over $\mathscr{C}$ for any sufficiently small $\epsilon>0$, $\mathcal{X}$ is a test configuration for $X$. 
Since $\mathscr{C}$ is normal and projective over $\mathbb{P}^1$, there exists an ample $\mathbb{G}_m$-linearized $\mathbb{Q}$-line bundle $\mathscr{M}'$ on $\mathscr{C}$ (\cite[Corollary 1.6]{GIT}).
By adding $\rho^*_{\mathscr{C}}L'_{\mathbb{P}^1}$ to $\mathscr{M}'$ for some $\mathbb{Q}$-Cartier $\mathbb{Q}$-divisor $L'$ on $C$, we may assume that there exists
$r'\in\mathbb{Q}_{>0}$ such that $(\mathscr{C},\epsilon^{-1}\rho^*_{\mathscr{C}}D_{\mathbb{P}^1}+\mathscr{M})$ is an ample test configuration for $(C,r'L)$, where $D$ is a $\mathbb{Q}$-Cartier $\mathbb{Q}$-divisor on $C$ such that $f^*D\sim_{\mathbb{Q}}K_X+\Delta$.
Similarly, we obtain an ample $\mathbb{Q}$-line bundle $\mathscr{M}''$ on $\mathscr{C}$ and $r''\in\mathbb{Q}_{>0}$ such that $(\mathscr{C},\mathscr{M}'')$ is an ample test configuration for $(C,r''L)$.
Thus, $(\mathcal{X},\epsilon^{-1}(K_{\mathcal{X}}+\Delta_{\mathcal{X},\epsilon})+\Pi_{\mathcal{X}}^*(\mathscr{M}'+l\mathscr{M}''))$ is an ample test configuration for $(X,H+(r'+r''l)L)$, where $\Pi_{\mathcal{X}}\colon\mathcal{X}\to\mathscr{C}$ is the canonical morphism, for any sufficiently large $l\in\mathbb{Q}_{>0}$.
We complete the proof by letting $r:=r'+lr''$ and $\mathscr{M}:=\mathscr{M}'+l\mathscr{M}''$ for some sufficiently large $l$.
\end{proof}

Next, we explain how to construct a test configuration for $X$ whose DF invariant is very close to the log-twisted DF invariant of the test configuration for the base by applying Proposition \ref{model}.

\begin{prop}\label{cp}
Let $f:(X,\Delta,H)\to (C,L)$ be a polarized lc-trivial fibration such that $H$ is ample on $X$.
Let $B$ be the discriminant divisor and $M$ be the moduli divisor on $C$ respectively.
 Let $(\mathscr{C},\mathcal{L})$ be a semiample test configuration for $(C,L)$.
  Then, there exist $d\in\mathbb{Z}_{>0}$ and a semiample test configuration $(\mathscr{C}',\mathcal{L}')$ such that there exists a canonical morphism $r_{\mathscr{C}'}\colon\mathscr{C}'\to\mathscr{C}^{(d)}$ with $\mathcal{L}'\sim_{\mathbb{Q}}r_{\mathscr{C}'}^*\mathcal{L}^{(d)}$
  satisfying the following (cf.~Notation \ref{rramamra}).
\begin{enumerate}[(i)]
\item\label{cp-i} There exists a canonical morphism $\rho_{\mathscr{C}'}:\mathscr{C}'\to C_{\mathbb{P}^1}$.
\item\label{cp-ii} $\mathscr{C}'_0$ is reduced
.
\item\label{cp-iii}  There exist a positive integer $r>0$ and an ample test configuration $(\mathcal{X},\mathcal{H})$ for $(X,H+rL)$ with a canonical morphism $\Pi:\mathcal{X}\to\mathscr{C}'$.
\item\label{cp-iv} There exists an open subset $U\subset \mathscr{C}'$ such that $\mathrm{codim}_{\mathscr{C}'}(\mathscr{C}'\setminus U)\ge2$ and 
\[
(K_{\mathcal{X}}+(\rho_{\mathcal{X}})_*^{-1}\Delta_{\mathbb{P}^1})|_{\Pi^{-1}(U)}\sim_{\mathbb{Q}}\Pi|_{\Pi^{-1}(U)}^*((K_{\mathscr{C}'}+\mathcal{M}+ (\rho_{\mathscr{C}'})_*^{-1}B_{\mathbb{P}^1})|_U),
\]
where $\rho_{\mathcal{X}}:\mathcal{X}\dashrightarrow X_{\mathbb{P}^1}$ is the canonical birational map and $\mathcal{M}$ is the moduli divisor on $\mathscr{C}'$ defined by $f\times\mathrm{id}_{\mathbb{P}^1}$ and $M_{\mathbb{P}^1}$ (see the paragraph before Definition \ref{de-ambro}).
\end{enumerate}
\end{prop}

\begin{proof}
We first note that to show Proposition \ref{cp}, we may freely replace $(\mathscr{C},\mathcal{L})$ with some blow up or the normalized base change by the $d$-th power morphism of $\mathbb{P}^1$ for some $d\in\mathbb{Z}_{>0}$.
Take a normal test configuration $\mathscr{C}'$ for $C$ with canonical morphisms $r_{\mathscr{C}'}\colon\mathscr{C}'\to\mathscr{C}$ and $\rho_{\mathscr{C}'}\colon\mathscr{C}\to C_{\mathbb{P}^1}$.
Let $\mathcal{L}':=r_{\mathscr{C}'}^*\mathcal{L}$.
Let $\pi:Y\to X$ be the log resolution and set $\Theta:=\pi^{-1}_*\Delta+\mathrm{Ex}(\pi)$ as a snc divisor on $Y$.
 Then $(Y,\Theta)$ is subdlt and log smooth. 
 Let $\mathcal{Y}$ be a $\mathbb{G}_m$-equivariant blow up of $Y_{\mathbb{P}^1}$ that is a smooth and projective test configuration for $Y$ such that the canonical birational map $\Pi_{\mathcal{Y}}\colon\mathcal{Y}\dashrightarrow\mathscr{C}'$ is a morphism (cf.~\cite{Ko2}, \cite{LX}). 
 Let $g\colon\mathcal{Y} \to X_{\mathbb{P}^1}$ and $\rho_{\mathcal{Y}}\colon\mathcal{Y}\to Y_{\mathbb{P}^1}$ be the canonical birational morphisms. 
 Then we can take a $\mathbb{G}_m$-equivariant semistable reduction $\mathcal{Y}'$ of $\mathcal{Y}$ (cf.~\cite{KKMS}, \cite[Theorem 7.17]{KoMo}, \cite[Lemma 5]{LX}) such that $\mathcal{Y}'\to\mathcal{Y}^{(d)}$ is birational for some $d\in\mathbb{Z}_{>0}$ and $\mathcal{Y}'_0+\rho_{\mathcal{Y}'}^*\Theta_{\mathbb{P}^1}$ is a reduced snc divisor, where $\rho_{\mathcal{Y}'}\colon\mathcal{Y}'\to Y_{\mathbb{P}^1}$ is the canonical birational morphism. 
 We may also assume that $\mathscr{C}'^{(d)}_0$ is also reduced.
 Thus, we may replace $\mathcal{Y}$ by $\mathcal{Y}'$ and $\mathscr{C}'$ by $\mathscr{C}'^{(d)}$ and hence we may assume that $\mathcal{Y}_0+\rho_{\mathcal{Y}}^*\Theta_{\mathbb{P}^1}$ is a reduced snc divisor on $\mathcal{Y}$.
 By \cite[III Corollary 10.7]{Ha}, $(\mathcal{Y},\mathcal{Y}_0+\rho_{\mathcal{Y}}^*\Theta_{\mathbb{P}^1})$ is relatively log smooth over some neighborhoods of the generic points of all irreducible components of $\mathscr{C}'_0$. 
 Then, $\mathscr{C}'$ satisfies the conditions (\ref{cp-i}) and (\ref{cp-ii}) now. 

Next, we deal with the rest conditions.
We note that
 $$\Pi_{\mathcal{Y}}:\left(\mathcal{Y},g^{-1}_*\Delta_{\mathbb{P}^1}+\sum_i(1-A_{(X_{\mathbb{P}^1},\Delta_{\mathbb{P}^1})}(E_i))E_i\right)\to \mathscr{C}'$$
  is a sublc-trivial fibration, where $E_i$ runs over all $g$-exceptional prime divisors.
  Let $\Xi:=g^{-1}_*\Delta_{\mathbb{P}^1}+\sum_i(1-A_{(X_{\mathbb{P}^1},\Delta_{\mathbb{P}^1})}(E_i))E_i$.
 Then we have the canonical bundle formula for $\Pi_{\mathcal{Y}}$
\begin{align}
\label{eq--canbdleform-Pi}K_{\mathcal{Y}}+\Xi\sim_{\mathbb{Q}}\Pi_{\mathcal{Y}}^*(K_{\mathscr{C}}+\mathcal{B}+\mathcal{M}),
\end{align}       
where $\mathcal{B}$ (resp.~$\mathcal{M}$) is the discriminant (resp.~moduli) divisor. 
We note that 
\begin{align*}
K_{\mathcal{Y}}+\Xi&\sim_{\mathbb{Q}}g^*(K_{X_{\mathbb{P}^1}}+\Delta_{\mathbb{P}^1})=\Pi_{\mathcal{Y}}^*\rho_{\mathscr{C}'}^*(K_{C_{\mathbb{P}^1}}+M_{\mathbb{P}^1}+ B_{\mathbb{P}^1}),\quad\text{and hence}\\
K_{\mathscr{C}}+\mathcal{M}+ \mathcal{B}&=\rho_{\mathscr{C}'}^*(K_{C_{\mathbb{P}^1}}+M_{\mathbb{P}^1}+ B_{\mathbb{P}^1}).
\end{align*}    
This means that $\mathcal{M}$ is the moduli divisor defined by $f\times\mathrm{id}_{\mathbb{P}^1}$ and $M_{\mathbb{P}^1}$.
On the other hand, we obtain by (\ref{eq--canbdleform-Pi}) that
\[
K_{\mathcal{Y}}+(\rho_{\mathcal{Y}})^{-1}_*\Theta_{\mathbb{P}^1}\sim_{\mathbb{Q}}\Pi_{\mathcal{Y}}^*(K_{\mathscr{C}}+\mathcal{M}+ \mathcal{B})-\sum_i(1-A_{(X_{\mathbb{P}^1},\Delta_{\mathbb{P}^1})}(Q_i))Q_i+\sum_jA_{(X_{\mathbb{P}^1},\Delta_{\mathbb{P}^1})}(F_j)F_j,
\]
where $Q_i$ and $F_j$ run over all $g$-exceptional prime divisors such that $Q_i\subset\mathcal{Y}_0$ and $F_j\not\subset \mathcal{Y}_0$ respectively. 
Since $(\mathcal{Y},\mathcal{Y}_0+(\rho_{\mathcal{Y}})^{-1}_*\Theta_{\mathbb{P}^1})$ is log smooth over some neighborhood of the generic point $\eta_P$ of every irreducible component $P$ of $\mathscr{C}_0$, for any sufficiently small neighborhood $\mathcal{Y}_P\subset \mathcal{Y}$ of $\Pi_{\mathcal{Y}}^{-1}(\eta_P)$,
\[
\mathrm{ord}_P(\mathcal{B})=1-\mathrm{lct}_{\left(\mathcal{Y}_P,\Xi|_{\mathcal{Y}_P}\right)}(\Pi_{\mathcal{Y}}^*P)=\max_G(1-A_{(X_{\mathbb{P}^1},\Delta_{\mathbb{P}^1})}(G)),
\]
where $G$ runs over all prime divisors on $\mathcal{Y}$ whose images are $P$. 
Let $\mathscr{C}'_{\mathrm{reg}}\subset \mathscr{C}'$ and set a $\mathbb{Q}$-divisor on $\Pi_{\mathcal{Y}}^{-1}(\mathscr{C}'_{\mathrm{reg}})$ as
 \begin{align*}
     D&:=\Pi_{\mathcal{Y}}|_{\Pi_{\mathcal{Y}}^{-1}(\mathscr{C}'_{\mathrm{reg}})}^*(\mathcal{B}-(\rho_{\mathscr{C}})_*^{-1}B_{\mathbb{P}^1})|_{\mathscr{C}'_{\mathrm{reg}}}-\sum_i(1-A_{(X_{\mathbb{P}^1},\Delta_{\mathbb{P}^1})}(Q_i))Q_i|_{\Pi_{\mathcal{Y}}^{-1}(\mathscr{C}'_{\mathrm{reg}})}\\
     &+\sum_jA_{(X_{\mathbb{P}^1},\Delta_{\mathbb{P}^1})}(F_j)F_j|_{\Pi_{\mathcal{Y}}^{-1}(\mathscr{C}'_{\mathrm{reg}})},
 \end{align*}
 where $Q_i$ and $F_j$ run over all $g$-exceptional prime divisors such that $Q_i\subset\mathcal{Y}_0$ and $F_j\not\subset \mathcal{Y}_0$ respectively. 
Then, 
\begin{equation}
(K_{\mathcal{Y}}+(\rho_{\mathcal{Y}})^{-1}_*\Theta_{\mathbb{P}^1})|_{\Pi_{\mathcal{Y}}^{-1}(\mathscr{C}'_{\mathrm{reg}})}\sim_{\mathbb{Q}}\Pi_{\mathcal{Y}}|_{\Pi_{\mathcal{Y}}^{-1}(\mathscr{C}'_{\mathrm{reg}})}^*((K_{\mathscr{C}'}+\mathcal{M}+ (\rho_{\mathscr{C}'})_*^{-1}B_{\mathbb{P}^1})|_{\mathscr{C}'_{\mathrm{reg}}})+D,\label{eq--def--D}
\end{equation}
where $\mathrm{codim}_{\mathscr{C}'_{\text{reg}}}\Pi_{\mathcal{Y}}(\mathrm{Supp}\,D_-)\ge2$ and $D_+$ is of insufficient fiber type in the sense of \cite[Definition 2.4]{Ta}, i.e.~if $R$ is an irreducible component of $\Pi_{\mathcal{Y}}(\mathrm{Supp}\,D_+)$ of codimension one in $\mathscr{C}'_{\text{reg}}$ with the generic point $\eta_R$, then $\Pi_{\mathcal{Y}}^{-1}(\eta_R)$ is not contained in the support of $D_+$.
Indeed, for any irreducible component $P$ of $\mathscr{C}'_0$, if $Q$ is a prime divisor on $\mathcal{Y}$ dominating $P$, then we have by (\ref{eq--def--D}) that
\[
\mathrm{ord}_Q(D)=A_{(X_{\mathbb{P}^1},\Delta_{\mathbb{P}^1})}(Q)-\min_GA_{(X_{\mathbb{P}^1},\Delta_{\mathbb{P}^1})}(G)\ge0,
\]
 where $G$ runs over all prime divisors dominating $P$.
 Set $U\subset \mathscr{C}'_{\text{reg}}$ as an open subset such that $\mathrm{codim}_{\mathscr{C}'}(\mathscr{C}'\setminus U)\ge2$ and $\Pi_{\mathcal{Y}}^{-1}(U)\cap D=\Pi_{\mathcal{Y}}^{-1}(U)\cap D_+$.

Note that $(\mathcal{Y},(\rho_{\mathcal{Y}})^{-1}_*\Theta_{\mathbb{P}^1}+\mathcal{Y}_{0})$ is log smooth and dlt. 
By Proposition \ref{model}, there exists a $\mathbb{G}_m$-equivariant birational contraction $\mathcal{Y}\dashrightarrow\mathcal{X}$ such that $(\mathcal{X},\Delta_{\mathcal{X},\epsilon})$ is the lc model of $(\mathcal{Y},(\rho_{\mathcal{Y}})^{-1}_*\Theta_{\mathbb{P}^1}+\epsilon(\pi_{\mathbb{P}^1}\circ\rho_{\mathcal{Y}})^*H_{\mathbb{P}^1})$ for any sufficiently small $\epsilon\in(0, 1)\cap\mathbb{Q}$ over $\mathscr{C}'$. 
Moreover, there is also a $\mathbb{Q}$-factorial good minimal model $(\mathcal{W},\Delta_\mathcal{W})$ of $(\mathcal{Y},(\rho_{\mathcal{Y}})^{-1}_*\Theta_{\mathbb{P}^1})$ and a birational contraction $\xi: \mathcal{W}\dashrightarrow\mathcal{X}$ by Proposition \ref{model}. 
Then, 
\[
(K_{\mathcal{W}}+\Delta_{\mathcal{W}})|_{\Pi_{\mathcal{W}}^{-1}(U)}\sim_{\mathbb{Q}}\Pi_{\mathcal{W}}|_{\Pi_{\mathcal{W}}^{-1}(U)}^*((K_{\mathscr{C}}+\mathcal{M}+ (\rho_{\mathscr{C}'})^{-1}_*B_{\mathbb{P}^1})|_{U})+D'
\]
where $\Pi_{\mathcal{W}}:\mathcal{W}\to \mathscr{C}'$ is the structure morphism and $D'$ is the strict transform of $D|_{\Pi_{\mathcal{Y}}^{-1}(U)}$. 
Now, we claim that $\mathrm{codim}_{U}\Pi_{\mathcal{W}}(\mathrm{Supp}\,D')\ge 2$. 
Indeed, for any irreducible component $P$ of $\mathscr{C}'_0$, take a prime divisor $Q$ on $\mathcal{Y}$ with the generic point $\eta_Q$ dominating $P$ such that $Q\not\subset \mathrm{Supp}\, D_+$. 
It is easy to see that $\eta_Q\not\in\mathbf{B}((K_{\mathcal{W}}+\Delta_{\mathcal{W}})|_{\Pi_{\mathcal{W}}^{-1}(U_P)}/U_P)$ for any  sufficiently small open neighborhood $U_P\subset U$ of the generic point of $P$. 
Thus, $Q$ is not contracted by $\mathcal{Y}\dashrightarrow\mathcal{W}$ by \cite[2.4]{HX} and $(D'_+)|_{\Pi_{\mathcal{W}}^{-1}(U_P)}$ is also of insufficient fiber type over $U_P$. 
Therefore, $$(D'_+)|_{\Pi_{\mathcal{W}}^{-1}(U_P)}=D'|_{\Pi_{\mathcal{W}}^{-1}(U_P)}=0$$ by \cite[2.10]{Lai} and \cite[2.4 (1)]{HX}. 
This implies that $D'=0$.
Take an ample $\mathbb{Q}$-line bundle $\mathcal{H}$ on $\mathcal{X}$ as Proposition \ref{model}. 
Then, it is easy to check that $\Pi\colon\mathcal{X}\to\mathscr{C}'$ and $U$ 
satisfy the conditions (\ref{cp-iii}) and (\ref{cp-iv}).
We complete the proof.
\end{proof}
 
\begin{proof}[Proof of Theorem \ref{can}]
Suppose that $\mathrm{dim}\,X=n$ and $\mathrm{dim}\,C=c$.
For any general point $t\in C$, $H_t$ denotes the restriction of $H$ to the fiber of $f$ over $t$.
Let 
$(\mathscr{C},\mathcal{L})$ be a semiample test configuration for $(C,L)$ such that 
\[
M^\mathrm{NA}_{(B,M)}(\mathscr{C},\mathcal{L})<0.
\]
 Then, we may replace $H$ by $H+rL$, where $r$ is as in Proposition \ref{cp} and take $(\mathcal{X},\mathcal{H})$ and a canonical birational morphism $r_{\mathscr{C}'}\colon\mathscr{C}'\to\mathscr{C}^{(d)}$ as in Proposition \ref{cp}. 
 For any sufficiently small $\epsilon\in\mathbb{Q}_{>0}$,
\[
\frac{-(K_X+\Delta)\cdot (\epsilon H+L)^{n-1}}{ (\epsilon H+L)^n}=-\frac{c(K_C+M+B)\cdot L^{c-1}}{nL^{c}}+O(\epsilon).
\]
 Then
\begin{align*}
&(\epsilon H+L)^n\mathrm{DF}_{\Delta}(\mathcal{X},\epsilon\mathcal{H}+\Pi^*\mathcal{L}')\\
&=(K_{\mathcal{X}/\mathbb{P}^1}+(\rho_{\mathcal{X}})_*^{-1}\Delta_{\mathbb{P}^1})\cdot (\epsilon\mathcal{H}+\Pi^*\mathcal{L}')^n-\frac{n(K_X+\Delta)\cdot (\epsilon H+L)^{n-1}}{(n+1) (\epsilon H+L)^n}(\epsilon\mathcal{H}+\Pi^*\mathcal{L}')^{n+1}\\
&=(K_{\mathcal{X}/\mathbb{P}^1}+(\rho_{\mathcal{X}})_*^{-1}\Delta_{\mathbb{P}^1})\cdot (\epsilon\mathcal{H}+\Pi^*\mathcal{L}')^n\\
&-\frac{n}{n+1}\left(\frac{c(K_C+M+B)\cdot L^{c-1}}{nL^{c}}+O(\epsilon)\right)(\epsilon\mathcal{H}+\Pi^*\mathcal{L}')^{n+1}.
\end{align*}
By the condition (\ref{cp-iv}) of Proposition \ref{cp}, we obtain that 
\[
(K_{\mathcal{X}/\mathbb{P}^1}+(\rho_{\mathcal{X}})_*^{-1}\Delta_{\mathbb{P}^1})\cdot (\epsilon\mathcal{H}+\Pi^*\mathcal{L}')^n=\epsilon^{n-c}(H_t)^{n-c}\binom{n}{c}(K_{\mathscr{C}'/\mathbb{P}^1}+\mathcal{M}+ (\rho_{\mathscr{C}'})_*^{-1}B_{\mathbb{P}^1})\cdot \mathcal{L}'^{c}+O(\epsilon^{n-c+1}),
\]
 where $\mathcal{M}$ is the moduli divisor induced by $f\times\mathrm{id}_{\mathbb{P}^1}$.
 Thus, we conclude that the leading term of $(\epsilon H+L)^n\mathrm{DF}_{\Delta}(\mathcal{X},\epsilon\mathcal{H}+\Pi^*\mathcal{L}')$  coincides with
\begin{align*}
    &\epsilon^{n-c}(H_t)^{n-c}\binom{n}{c}(K_{\mathscr{C}'/\mathbb{P}^1}+\mathcal{M}+ (\rho_{\mathscr{C}'})_*^{-1}B_{\mathbb{P}^1})\cdot \mathcal{L}'^{c}-\binom{n+1}{c+1}\frac{c(K_C+M+B)\cdot L^{c-1}}{nL^{c}}\mathcal{H}^{n-c}\cdot \mathcal{L}'^{c+1}\\
    &=\epsilon^{n-c}\binom{n}{c}(H_t^{n-c})\left(K_{\mathscr{C}'/\mathbb{P}^1}+\mathcal{M}+ (\rho_{\mathscr{C}'})_*^{-1}B_{\mathbb{P}^1})\cdot \mathcal{L}'^{c}-\frac{c(K_C+M+B)\cdot L^{c-1}}{(c+1)L^{c}}\mathcal{L}'^{c+1}\right)
    .
\end{align*}
$\rho_{\mathscr{C}'}^*M_{\mathbb{P}^1}-\mathcal{M}$ is effective by \cite[Lemma 3.39]{KoMo} and \cite[Theorem 1.1]{FG}.
Thus, the $\epsilon^{n-c}$-term of $(\epsilon H+L)^n\mathrm{DF}_{\Delta}(\mathcal{X},\epsilon\mathcal{H}+\Pi^*\mathcal{L}')$ is at most
$
\binom{n}{c}(H^{n-c}\cdot L^c)\mathrm{DF}_{(B,M)}(\mathscr{C}',\mathcal{L}')
$.
Here, we note that
$$\mathrm{DF}_{(B,M)}(\mathscr{C}',\mathcal{L}')=M^\mathrm{NA}_{(B,M)}(\mathscr{C}',\mathcal{L}')=dM^\mathrm{NA}_{(B,M)}(\mathscr{C},\mathcal{L})<0$$ since $\mathscr{C}'_0$ is reduced. Thus, $\mathrm{DF}_{\Delta}(\mathcal{X},\epsilon\mathcal{H}+\Pi^*\mathcal{L}')<0$ for any sufficiently small $\epsilon>0$.
\end{proof}


 
If $C$ is a curve, $C$ is necessarily an Ambro model for any sublc-trivial fibration $f:(X,\Delta)\to C$. Thus, we obtain the following by Theorem \ref{can}:
\begin{cor}
Let $f:(X,\Delta,H)\to (C,L)$ be a polarized lc-trivial fibration such that $C$ is a smooth curve.
Let
$M$ (resp.~$B$) be the moduli (resp.~discriminant) divisor. 
If $(X,\Delta,H)$ is adiabatically K-semistable over $(C,L)$, then $(C,B,M,L)$ is log-twisted K-semistable.
\end{cor}


Next, we state a partial result that uniform adiabatic K-stability of the total space implies K-stability of the log-twisted Fano base of a klt-trivial fibration.
To see this, we prepare the following two lemmas.

\begin{lem}\label{lem-unad}
    Let $f\colon(X,\Delta,H)\to (C,L)$ be a polarized algebraic fiber space subpair and $T$ be a $\mathbb{Q}$-line bundle on $C$.
    Then there exist $\epsilon_0>0$ and $G>0$ such that $(X,L)$ is $\mathrm{J}^{G(\epsilon H+L)+f^*T}$-semistable for any $\epsilon\in\mathbb{Q}\cap(0,\epsilon_0)$. 
\end{lem}

\begin{proof}
    Suppose that $X$ (resp.~$C$) is of dimension $n$ (resp.~$c$).
    By Remark \ref{rem--useful}, J$^{G(\epsilon H+L)+f^*T}$-semistability holds if $G(\epsilon H+L)+f^*T$ and
    \[
    \left(n\frac{f^*T\cdot (f^*L+\epsilon H)^{n-1}}{(f^*L+\epsilon H)^n}+G\right)(f^*L+\epsilon H)-(n-1)f^*T 
    \]
    are ample.
    For sufficiently small $\epsilon>0$, we have that $G(\epsilon H+L)+f^*T=G\epsilon H+f^*(GL+T)$ and
    \begin{align*}
&\left(n\frac{f^*T\cdot (f^*L+\epsilon H)^{n-1}}{(f^*L+\epsilon H)^n}+G\right)(f^*L+\epsilon H)-(n-1)f^*T\\
&=f^*\left(\left(G+c\frac{f^*T\cdot f^*L^{c-1}\cdot H^{n-c}}{f^*L^{c}\cdot H^{n-c}}+O(\epsilon) \right)L-(n-1)T\right)\\
&+\epsilon\left(G+c\frac{f^*T\cdot f^*L^{c-1}\cdot H^{n-c}}{f^*L^{c}\cdot H^{n-c}}+O(\epsilon) \right)H.
\end{align*}
Thus, there exist $\epsilon_0>0$ and $G>0$ such that for any $\epsilon\in\mathbb{Q}\cap(0,\epsilon_0)$, the above two $\mathbb{Q}$-line bundles are ample. 
Then, J$^{G(\epsilon H+L)+f^*T}$-semistability holds for such $G$ and any $\epsilon\in\mathbb{Q}\cap(0,\epsilon_0)$.
\end{proof}
 
\begin{lem}\label{unad}
Let $f\colon(X,\Delta,H)\to (C,L)$ be a polarized lc-trivial fibration and suppose that $(X,\Delta,H)$ is uniformly adiabatically K-stable over $(C,L)$. 
Then there exist three positive constants $\delta_1$, $\delta_2$ and $\epsilon_0$ such that for any rational number $\epsilon\in(0,\epsilon_0)$ and semiample test configuration $(\mathcal{X},\mathcal{M})$
 for $(X,\epsilon H+L)$,
\[
M_{\Delta}^{\mathrm{NA}}(\mathcal{X},\mathcal{M})\ge \delta_1H_{\Delta}^{\mathrm{NA}}(\mathcal{X},\mathcal{M}) +\delta_2 (\mathcal{J}^{\epsilon H+L})^{\mathrm{NA}}(\mathcal{X},\mathcal{M}).
\]
\end{lem}
 
\begin{proof}
Decompose $M_{\Delta}^{\mathrm{NA}}(\mathcal{X},\mathcal{M})=H_{\Delta}^{\mathrm{NA}}(\mathcal{X},\mathcal{M})+(\mathcal{J}^{K_X+\Delta})^{\mathrm{NA}}(\mathcal{X},\mathcal{M})$. By assumption, there exist positive constants $\epsilon'_0$ and $\delta$ such that
$$H_{\Delta}^{\mathrm{NA}}(\mathcal{X},\mathcal{M})\ge-(\mathcal{J}^{K_X+\Delta})^{\mathrm{NA}}(\mathcal{X},\mathcal{M})+\delta (\mathcal{J}^{\epsilon H+L})^{\mathrm{NA}}(\mathcal{X},\mathcal{M})$$
 for any $\epsilon\in\mathbb{Q}\cap(0, \epsilon'_0)$ and semiample test configuration $(\mathcal{X},\mathcal{M})$ for $(X,\epsilon H+f^*L)$. 
 Thus, for any $0<\delta_1<1$,
\begin{align}
(1-\delta_1)H_{\Delta}^{\mathrm{NA}}(\mathcal{X},\mathcal{M})&\ge-(\mathcal{J}^{K_X+\Delta})^{\mathrm{NA}}(\mathcal{X},\mathcal{M})\label{eq--unad1}\\
&+\delta(1-\delta_1) (\mathcal{J}^{\epsilon H+L})^{\mathrm{NA}}(\mathcal{X},\mathcal{M})+\delta_1(\mathcal{J}^{K_X+\Delta})^{\mathrm{NA}}(\mathcal{X},\mathcal{M}).\nonumber
\end{align}
On the other hand, recall that there exists a $\mathbb{Q}$-line bundle $T$ on $C$ such that ${K_X+\Delta}\sim_{\mathbb{Q}}f^*T$. 
By Lemma \ref{lem-unad}, there exist $0<\epsilon_0<\epsilon_0'$ and $G>0$ such that for any $\epsilon\in\mathbb{Q}\cap(0, \epsilon_0)$,
\[
G(I^{\mathrm{NA}}(\mathcal{X},\mathcal{M})-J^{\mathrm{NA}}(\mathcal{X},\mathcal{M}))+(\mathcal{J}^{K_X+\Delta})^{\mathrm{NA}}(\mathcal{X},\mathcal{M})\ge0.
\]
Therefore, there exist positive constants $\delta_1'$ and $\delta_2$ such that for any $0<\epsilon<\epsilon_0$ and $0<\delta_1<\delta_1'$,
\begin{equation}
\delta(1-\delta_1) (\mathcal{J}^{\epsilon H+L})^{\mathrm{NA}}(\mathcal{X},\mathcal{M})+\delta_1(\mathcal{J}^{K_X+\Delta})^{\mathrm{NA}}(\mathcal{X},\mathcal{M})\ge \delta_2(\mathcal{J}^{\epsilon H+L})^{\mathrm{NA}}(\mathcal{X},\mathcal{M}).\label{eq--unad2}
\end{equation}
Fix $0<\delta_1<\min\{\delta_1',1\}$. 
Then we have
\begin{align*}
M_{\Delta}^{\mathrm{NA}}(\mathcal{X},\mathcal{M})&=(1-\delta_1)H_{\Delta}^{\mathrm{NA}}(\mathcal{X},\mathcal{M})+(\mathcal{J}^{K_X+\Delta})^{\mathrm{NA}}(\mathcal{X},\mathcal{M})+\delta_1H_{\Delta}^{\mathrm{NA}}(\mathcal{X},\mathcal{M})\\
&\ge\delta(1-\delta_1) (\mathcal{J}^{\epsilon H+L})^{\mathrm{NA}}(\mathcal{X},\mathcal{M})+\delta_1(\mathcal{J}^{K_X+\Delta})^{\mathrm{NA}}(\mathcal{X},\mathcal{M})+\delta_1H_{\Delta}^{\mathrm{NA}}(\mathcal{X},\mathcal{M})\\
&\ge\delta_1H_{\Delta}^{\mathrm{NA}}(\mathcal{X},\mathcal{M}) +\delta_2 (\mathcal{J}^{\epsilon H+L})^{\mathrm{NA}}(\mathcal{X},\mathcal{M})
\end{align*}
by the equations (\ref{eq--unad1}) and (\ref{eq--unad2}).
We complete the proof.
\end{proof}
 
\begin{thm}\label{stp1}
Let $f\colon(X,\Delta,H)\to (C,L)$ be a polarized klt-trivial fibration, $M$ (resp.~$B$) be the moduli (resp.~discriminant) divisor on $C$. Suppose that $C$ is an Ambro model, $(X,\Delta,H)$ is uniformly adiabatically K-stable over $(C,L)$ and $L=-(K_C+M+B)$ is ample. 

Then $(C,B,M,L)$ is log-twisted K-stable.
\end{thm}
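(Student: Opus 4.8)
The plan is to argue by contradiction: I would destabilise the base with Fact \ref{aa} (Theorem \ref{appendix}) and then transport the instability up to $X$ via a boundary perturbation by a divisor pulled back from $C$. Since uniform adiabatic K-stability forces adiabatic K-semistability, Theorem \ref{can} already gives that $(C,B,M,L)$ is log-twisted K-semistable; so I assume it is not log-twisted K-stable, i.e.\ it is a strictly K-semistable log-twisted Fano pair with twist $M$. Because $L=-(K_C+B+M)$ is ample, hence big and nef, Lemma \ref{abund} shows $M$ is semiample, so Fact \ref{aa} applies and produces $D\in|L|_{\mathbb{Q}}$ and $\epsilon_1>0$ with $(C,B+\epsilon D,M,(1-\epsilon)L)$ log-twisted K-unstable whenever $0<\epsilon<\epsilon_1$.

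Next I would fix $\epsilon\in(0,\epsilon_1)$ small enough that $(X,\Delta+\epsilon f^*D)$ is still klt. Since $f^*D\sim_{\mathbb{Q}}f^*L$ is pulled back from $C$, the fibration $f:(X,\Delta+\epsilon f^*D)\to C$ is again klt-trivial, with canonical bundle formula $K_{(X,\Delta+\epsilon f^*D)}=f^*(K_C+M+(B+\epsilon D))$; a log canonical threshold computation at the finitely many reducible fibres shows its discriminant divisor is $B+\epsilon D$ and its moduli $b$-divisor is unchanged, so $C$ is still an Ambro model. By the contrapositive of Theorem \ref{can}, it then suffices to prove that $(X,\Delta+\epsilon f^*D,H)$ is adiabatically K-semistable over $(C,(1-\epsilon)L)$: that would force $(C,B+\epsilon D,M,(1-\epsilon)L)$ to be log-twisted K-semistable, a contradiction.

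After rescaling the base polarisation, this last statement reduces to showing $\mathrm{DF}_{\Delta+\epsilon f^*D}(\mathcal{X},\mathcal{M})\ge0$ for every semiample test configuration $(\mathcal{X},\mathcal{M})$ of $(X,\delta H+f^*L)$ with $\delta>0$ small --- the very family of polarisations occurring in the hypothesis. Writing $L'=\delta H+f^*L$, a direct computation with Definition \ref{nadef} gives
\[
\mathrm{DF}_{\Delta+\epsilon f^*D}(\mathcal{X},\mathcal{M})=\mathrm{DF}_{\Delta}(\mathcal{X},\mathcal{M})+\epsilon\,\Phi(\mathcal{X},\mathcal{M}),\qquad \Phi=\frac{(f^*D)_{\overline{\mathcal{X}}}\cdot\overline{\mathcal{M}}^n}{(L')^n}-\frac{n\,(f^*D\cdot(L')^{n-1})}{(L')^n}\,E^{\mathrm{NA}}(\mathcal{X},\mathcal{M}),
\]
and $\Phi$ differs from $(\mathcal{J}^{f^*L})^{\mathrm{NA}}(\mathcal{X},\mathcal{M})$ only by a term $(L')^{-n}\,\mathcal{E}\cdot\overline{\mathcal{M}}^n$ with $\mathcal{E}\ge0$ supported on $\mathcal{X}_0$. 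The step I would spend the most effort on is the estimate $|\Phi(\mathcal{X},\mathcal{M})|\le C\bigl(I^{\mathrm{NA}}-J^{\mathrm{NA}}\bigr)(\mathcal{X},\mathcal{M})$ for a constant $C$ independent of $(\mathcal{X},\mathcal{M})$ \emph{and of} $\delta\to0$: the $(\mathcal{J}^{f^*L})^{\mathrm{NA}}$ part because $f^*L$ is semiample, by the $\mathrm{J}$-stability computation underlying Lemma \ref{Jcomp} (cf.\ \cite[Proposition 4.3]{DR2}), and the $\mathcal{E}$ part using $0\le\mathcal{E}\cdot\overline{\mathcal{M}}^n\le\rho^*(f^*D)_{\mathbb{P}^1}\cdot\overline{\mathcal{M}}^n$ together with boundedness of the normalising factor. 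Granting this, uniform adiabatic K-stability of $(X,\Delta,H)$, i.e.\ $\mathrm{DF}_{\Delta}(\mathcal{X},\mathcal{M})\ge\delta_1 J^{\mathrm{NA}}(\mathcal{X},\mathcal{M})$, combined with $I^{\mathrm{NA}}-J^{\mathrm{NA}}\le c_0 J^{\mathrm{NA}}$ (\cite[Proposition 7.8]{BHJ}), gives $\mathrm{DF}_{\Delta+\epsilon f^*D}(\mathcal{X},\mathcal{M})\ge(\delta_1-Cc_0\epsilon)J^{\mathrm{NA}}(\mathcal{X},\mathcal{M})\ge0$ once $\epsilon<\delta_1/(Cc_0)$; alternatively Lemma \ref{unad} can be used, absorbing the perturbation into the $\delta_1 H^{\mathrm{NA}}_{\Delta}$ term.

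The main obstacle is exactly the uniformity of $C$ above: one must check that the comparison between $\Phi$ (equivalently $(\mathcal{J}^{f^*L})^{\mathrm{NA}}$) and $I^{\mathrm{NA}}-J^{\mathrm{NA}}$ does not degenerate as $\delta H+f^*L$ approaches the fibre class $f^*L$, where $(L')^n\to0$ and various intersection numbers blow up; this requires careful tracking of powers of $\delta$ and uses the semiampleness of $f^*L$ essentially. A secondary point that needs care is the identification of the canonical bundle formula of the perturbed fibration (discriminant exactly $B+\epsilon D$, moduli part unchanged), which is precisely where one uses that $D$ lives on $C$.
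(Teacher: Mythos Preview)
Your global architecture is exactly the paper's: use Theorem \ref{can} for semistability, invoke Theorem \ref{appendix} to produce $D\in|L|_{\mathbb{Q}}$ destabilising $(C,B+\epsilon D,M,(1-\epsilon)L)$, and then show that $(X,\Delta+\epsilon f^*D,H)$ remains adiabatically K-(semi)stable, which via (the proof of) Theorem \ref{can} forces the contradiction. Your identification of the perturbed canonical bundle formula (discriminant $B+\epsilon D$, unchanged moduli part) is also correct.

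The gap is in your primary estimate $|\Phi|\le C\,(I^{\mathrm{NA}}-J^{\mathrm{NA}})$ uniformly in $\delta$. The $(\mathcal{J}^{f^*L})^{\mathrm{NA}}$ piece is fine (this is exactly the content of Lemma \ref{Jcomp} and the argument in the proof of Lemma \ref{unad} via \cite[Corollary 8.5]{Hat}). The problem is the $\mathcal{E}$ piece: writing things out, $(L')^{-n}\,\mathcal{E}\cdot\overline{\mathcal{M}}^n=(L')^{-n}\sum_E b_E\,v_E(f^*D)\,(E\cdot\overline{\mathcal{M}}^n)$ is, up to the $b_E$'s, the entropy difference $H^{\mathrm{NA}}_{\Delta}-H^{\mathrm{NA}}_{\Delta+f^*D}$. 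Entropy is \emph{not} bounded above by $J$-type norms, so there is no reason for this to be $\le C\,(I^{\mathrm{NA}}-J^{\mathrm{NA}})$. Your proposed route, the inequality $\mathcal{E}\cdot\overline{\mathcal{M}}^n\le\rho^*(f^*D)_{\mathbb{P}^1}\cdot\overline{\mathcal{M}}^n$, is true but unusable: the right-hand side is not translation-invariant (under $\mathcal{M}\mapsto\mathcal{M}+c\mathcal{X}_0$ it shifts by $n c\,(f^*D\cdot(L')^{n-1})$), hence cannot be controlled by the translation-invariant norm $I^{\mathrm{NA}}-J^{\mathrm{NA}}$. In short, you need a \emph{lower} bound on $\Phi$, and the entropy-type term blocks it.

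Your ``alternative'' is in fact the paper's actual argument and is what makes the proof go through. Lemma \ref{unad} upgrades uniform adiabatic K-stability to
\[
M^{\mathrm{NA}}_{\Delta}\ \ge\ \delta_1\,H^{\mathrm{NA}}_{\Delta}\ +\ \delta_2\,J^{\mathrm{NA}}
\]
uniformly in the adiabatic parameter. One then absorbs the two pieces of the perturbation separately: the entropy change is absorbed into $\delta_1 H^{\mathrm{NA}}_{\Delta}$ via the identity $\delta_1 H^{\mathrm{NA}}_{\Delta+\frac{a}{\delta_1}f^*D}=\delta_1 H^{\mathrm{NA}}_{\Delta}+(H^{\mathrm{NA}}_{\Delta+af^*D}-H^{\mathrm{NA}}_{\Delta})$ and the positivity $H^{\mathrm{NA}}_{\Delta+\frac{a}{\delta_1}f^*D}>0$ for $(X,\Delta+\frac{a}{\delta_1}f^*D)$ klt (\cite[Proposition 9.16]{BHJ}); the $(\mathcal{J}^{f^*D})^{\mathrm{NA}}$ change is absorbed into $\delta_2 J^{\mathrm{NA}}$ via the J-stability bound $J^{\mathrm{NA}}+(\mathcal{J}^{\frac{a}{\delta_2}f^*D})^{\mathrm{NA}}>0$ for small $a$. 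This yields $M^{\mathrm{NA}}_{\Delta+af^*D}>0$, hence $\mathrm{DF}_{\Delta+af^*D}>0$, uniformly for small $m^{-1}$, contradicting the negativity coming from the base. So: promote your alternative to the main line, and drop the attempt to bound the entropy piece by $I^{\mathrm{NA}}-J^{\mathrm{NA}}$.
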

 
\begin{proof}
Since $(X,\Delta,H)$ is uniformly adiabatically K-stable over $(C,L)$, there exist positive constants $\epsilon_0>0$, $\delta_1>0$ and $\delta_2>0$ such that 
\begin{equation}
    M_{\Delta}^{\mathrm{NA}}(\mathcal{X},\mathcal{M})\ge \delta_1H_{\Delta}^{\mathrm{NA}}(\mathcal{X},\mathcal{M}) +\delta_2 (\mathcal{J}^{\epsilon H+L})^{\mathrm{NA}}(\mathcal{X},\mathcal{M})\label{eq-unad-in-thm}
\end{equation}
for any rational number $0<\epsilon<\epsilon_0$ and normal semiample test configuration $(\mathcal{X},\mathcal{M})$ for $(X,\epsilon H+L)$ by Lemma \ref{unad}. 
Fix an arbitrary $\mathbb{Q}$-Cartier $\mathbb{Q}$-divisor $D$ on $C$.
Replacing $\epsilon_0$ with a smaller constant if necessary, we have by Lemma \ref{lem-unad} that
\begin{equation}
(\mathcal{J}^{\epsilon H+L})^{\mathrm{NA}}(\mathcal{X},\mathcal{M})+(\mathcal{J}^{(\frac{a}{\delta_2}f^*D)})^{\mathrm{NA}}(\mathcal{X},\mathcal{M})>0\label{eq--stp1}
\end{equation}
for sufficiently small $a>0$ and $\epsilon\in\mathbb{Q}\cap(0,\epsilon_0)$
.
Thus, we conclude that there exists a positive constant $a_1>0$ such that for any $0<a<a_1$, $0<\epsilon<\epsilon_0$ and normal semiample test configuration $(\mathcal{X},\mathcal{M})$ for $(X,\epsilon H+L)$, $(X,\Delta+\frac{a}{\delta_1}f^*D)$ is klt and (\ref{eq--stp1}) holds.
Note that if $(X,\Delta+\frac{a}{\delta_1}f^*D)$ is klt, then 
\begin{equation}
H_{(\Delta+\frac{a}{\delta_1}f^*D)}^{\mathrm{NA}}(\mathcal{X},\mathcal{M})>0\label{eq--stp2}
\end{equation}
 by Theorem \ref{thm--alpha}. 
 We may assume that there exists a canonical morphism $\rho\colon\mathcal{X}\to X_{\mathbb{P}^1}$. 
 Put $n$ as the dimension of $X$.
Then, we see that
\begin{align*}
M_{\Delta+af^*D}^{\mathrm{NA}}(\mathcal{X},\mathcal{M})&-M_{\Delta}^{\mathrm{NA}}(\mathcal{X},\mathcal{M})\\
&=\frac{a}{(\epsilon H+L)^n}\left((\mathcal{D}-\rho^*(f^*D)_{\mathbb{P}^1})\cdot\mathcal{M}^n-\frac{nf^*D\cdot(\epsilon H+L )^{n-1}}{(n+1)(\epsilon H+L )^{n}}\mathcal{M}^{n+1}\right)\\
&=\delta_1(H_{(\Delta+\frac{a}{\delta_1}f^*D)}^{\mathrm{NA}}(\mathcal{X},\mathcal{M})-H_{\Delta}^{\mathrm{NA}}(\mathcal{X},\mathcal{M}))+(\mathcal{J}^{af^*D})^{\mathrm{NA}}(\mathcal{X},\mathcal{M}),
\end{align*}
where $\mathcal{D}$ is the strict transform of $(f^*D)_{\mathbb{P}^1}$ on $\mathcal{X}$
. 
Combining the above equation with (\ref{eq-unad-in-thm}), (\ref{eq--stp1}) and (\ref{eq--stp2}), we obtain that
\begin{align*}
M_{\Delta+af^*D}^{\mathrm{NA}}(\mathcal{X},\mathcal{M})&=\delta_1(H_{(\Delta+\frac{a}{\delta_1}f^*D)}^{\mathrm{NA}}(\mathcal{X},\mathcal{M})-H_{\Delta}^{\mathrm{NA}}(\mathcal{X},\mathcal{M}))+(\mathcal{J}^{af^*D})^{\mathrm{NA}}(\mathcal{X},\mathcal{M})+M_{\Delta}^{\mathrm{NA}}(\mathcal{X},\mathcal{M})\\
&\ge \delta_1H_{(\Delta+\frac{a}{\delta_1}f^*D)}^{\mathrm{NA}}(\mathcal{X},\mathcal{M})+\delta_2((\mathcal{J}^{\epsilon H+L})^{\mathrm{NA}}(\mathcal{X},\mathcal{M})+(\mathcal{J}^{(\frac{a}{\delta_2}f^*D)})^{\mathrm{NA}}(\mathcal{X},\mathcal{M}))>0.
\end{align*}
for any $a\in\mathbb{Q}\cap(0,a_1)$, $\epsilon\in\mathbb{Q}\cap(0,\epsilon_0)$ and normal semiample test configuration $(\mathcal{X},\mathcal{M})$ for $(X,\epsilon H+L)$. 
 By \cite[Proposition 7.15]{BHJ}, we note that $$\mathrm{DF}_{\Delta+af^*D}(\mathcal{X},\mathcal{M})\ge M_{\Delta+af^*D}^{\mathrm{NA}}(\mathcal{X},\mathcal{M})>0.$$

By Theorem \ref{can}, $(C,B,M,L)$ is at least log-twisted K-semistable.
Note that $(C,B)$ is klt by Theorem \ref{thm--odaka-type}.
Thus, it suffices to show that there exists no nontrivial normal semiample test configuration for $(C,L)$ whose log-twisted Donaldson-Futaki invariant is zero. 
To show this, assume the contrary. 
Note that $M$ is semiample by Lemma \ref{abund}. 
By Theorem \ref{appendix}, we may further assume that there exist an effective $\mathbb{Q}$-Cartier $\mathbb{Q}$-divisor $D\sim_{\mathbb{Q}}L$ on $C$ and a positive constant $a_0>0$ such that 
$(C,B+aD,M,L)$ is K-unstable for any $a\in\mathbb{Q}\cap(0,a_0)$. 
By Theorem \ref{can}, there exist a constant $0<\epsilon_{a}<\epsilon_0$  for any $a\in\mathbb{Q}\cap(0,a_0)$ such that for any $\epsilon\in\mathbb{Q}\cap(0,\epsilon_{a})$, there exists a normal semiample test configuration $(\mathcal{X},\mathcal{M})$ for $(X,\epsilon H+L)$ such that $\mathrm{DF}_{\Delta+af^*D}(\mathcal{X},\mathcal{M})<0$. 
However, by what we have shown in the first paragraph, $\mathrm{DF}_{\Delta+af^*D}(\mathcal{X},\mathcal{M})>0$ for any $a\in\mathbb{Q}\cap(0,\min\{a_0,a_1\})$, $\epsilon\in\mathbb{Q}\cap(0,\epsilon_a)$ and normal semiample test configuration $(\mathcal{X},\mathcal{M})$ for $(X,\epsilon H+L)$.
This is a contradiction.
 Thus, $(C,B,M,L)$ is K-stable and we complete the proof.
\end{proof}

 \subsection{Convergence of the $\delta$-invariant}\label{Weier}
In this subsection, we calculate $\lim_{\epsilon\to 0}\delta_{(X,\Delta)}(\epsilon H+L)$ when $f:(X,\Delta,H)\to (C,L)$ is a polarized algebraic fiber space and $C$ is a curve
. 
We make use of this to obtain a sufficiency condition of adiabatic K-stability of klt-trivial fibrations over curves
.

 First, we bound the multiplicity of $D_{\mathrm{vert}}$, which is the vertical part of $D\in|\epsilon H+L|_{\mathbb{Q}}$.
 
\begin{lem}\label{lem-claim}
Let $f\colon X\to C$ be a contraction to a proper smooth curve $C$. 
Suppose that $(X,f^{-1}(p)_{\mathrm{red}})$ is a projective log smooth pair for any $p\in C$.
Let $H$ be a nef $\mathbb{Q}$-line bundle on $X$ and $L$ be an ample $\mathbb{Q}$-line bundle on $C$.
Then there exists a positive constant $M>0$ such that for any rational number $\epsilon>0$, $D_\epsilon\in|\epsilon H+L|_{\mathbb{Q}}$ and closed point $P$ of $X$, (cf.~Example \ref{ex-ord_p})$$\mathrm{ord}_P(D_{\epsilon,\mathrm{hor}})\le M\epsilon.$$ 
\end{lem}
\begin{proof}
 It suffices to show that there exists a positive constant $M>0$ satisfying the following:
 \begin{itemize}
\item For any irreducible component $G$ of $f^{-1}(f(P))$ that contains $P$, there exists a curve $\gamma\subset f^{-1}(f(P))$ passing through $P$ but not contained in $D_{\epsilon,\mathrm{hor}}$.
Furthermore, $\gamma\cdot H\le M$.
\end{itemize}
Indeed, Lemma \ref{lem-claim} follows from this since
$$\mathrm{ord}_P(D_{\epsilon,\mathrm{hor}})\le\gamma\cdot D_{\epsilon,\mathrm{hor}}=\gamma\cdot(\epsilon H+L)=\epsilon(\gamma\cdot H)\le\epsilon M.$$

 Next, we show the existence of such $M$ and $\gamma$ as we stated above.
 Fix an ample line bundle $A$ on $X$.
  Let $\mathrm{dim}\,X=n$ and $\mu:\tilde{X}\to X$ be the blow up at $P$ with exceptional divisor $E$.
  By Seshadri's theorem \cite[Theorem 1.4.14]{Laz}, there exists a positive integer $m'_1$ such that $\mu^*(mA)-E$ are ample for any $m\ge m'_1$.
  On the other hand, there exists $m_1''\in\mathbb{Z}_{>0}$ such that $(mA-K_X-G)$ is also ample for any $m\ge m_1''$ and for any vertical prime divisor $G$ on $X$.
   Indeed, since there are only finitely many numerical classes of such $G$, it suffices to check the existence of $m''_1$ for finitely many $G$ (cf.~\cite[Corollary 1.2.24]{Laz}). 
   By setting $m_1:=(n-1)m_1'+m_1''$, we obtain that $\mu^*(m_1A-K_X-G)-(n-1)E$ is ample for any vertical prime divisor $G$ and $P\in X$.
   Choose a vertical prime divisor $G$ such that $P\in G$ and set $\tilde{G}=\mu^{-1}_*G$.
   Then 
$$m_1\mu^*A|_{\tilde{G}}-K_{\tilde{G}}-E|_{\tilde{G}}=(\mu^*(m_1A-K_X-G)-(n-1)E)|_{\tilde{G}}$$
    is ample. 
   Therefore, by \cite[Theorem 1.1]{Ko3}, there exists a positive integer $m_2$ depending only on $n$ such that $m_2(m_1\mu^*A-E)|_{\tilde{G}}$ is ample and generated by global sections.
    Then there exist general divisors $D_1,\cdots,D_{n-2}\in |m_2(m_1\mu^*A-E)|_{\tilde{G}}|$ such that $\gamma'=\bigcap_{i=1}^{n-2}D_i\subset \tilde{G}$ is a smooth curve intersecting with $E$ properly by Bertini's theorem and is not contained in $\mu^*(D_{\epsilon,\mathrm{hor}})$.
     Note that
\begin{align*}
\gamma'\cdot \mu^*D_{\epsilon,\mathrm{hor}}&=m_1^{n-2}m_2^{n-2}A^{n-2}\cdot G\cdot H\le m_1^{n-2}m_2^{n-2}\frac{\mathrm{ord}_G(f^{-1}f(G))}{\mathrm{deg}\,L}(A^{n-2}\cdot L\cdot H).
\end{align*}
We note that $T:=\max_G(\mathrm{ord}_G(f^{-1}f(G)))$ is a positive integer, where $G$ runs over all vertical prime divisors.
By setting $\gamma:=\mu_*\gamma'$ and $M:=m_1^{n-2}m_2^{n-2}\frac{T}{\mathrm{deg}\,L}(A^{n-2}\cdot L\cdot H)$, we see the condition holds as we claimed in the first paragraph.
We complete the proof.
\end{proof}

Furthermore, we prepare the following useful lemma, which follows from the same argument of \cite[Proof of Theorem 9.14, Step.~2]{BHJ}.

\begin{lem}\label{lem--alpha-delta-conv-final-part}
Let $X$ be a projective smooth variety.
For any rational number $0<\epsilon<1$, let $\Delta$ be a $\mathbb{Q}$-divisor such that $(X,\Delta)$ is log smooth and $\epsilon$-sublc.
    If $D$ is an effective $\mathbb{Q}$-divisor such that $\mathrm{ord}_P(D)\le\epsilon$ for any closed point $P\in X$, then $(X,\Delta+D)$ is sublc.
\end{lem}

\begin{proof}
By assumption, we see that $(1-\epsilon)\Delta_{\mathrm{red}}\ge \Delta$.
We see by \cite[Corollary 2.31]{KoMo} that $(X,\Delta_{\mathrm{red}})$ is sublc, i.e.~
\[
A_{X}(E)\ge \mathrm{ord}_{E}(\Delta_{\mathrm{red}})\ge \frac{1}{1-\epsilon}\mathrm{ord}_E(\Delta)
\]
for any prime divisor $E$ over $X$.
Since $\mathrm{ord}_P(D)\le\epsilon$ for any closed point $P\in X$, 
$$\epsilon A_X(E)\ge\mathrm{ord}_{E}(D)$$ 
by \cite[Lemma 8.10]{Ko1} for any prime divisor $E$ over $X$.
Thus, 
\[
A_{(X,\Delta+D)}(E)=(1-\epsilon)\left(A_{X}(E)-\frac{1}{1-\epsilon}\mathrm{ord}_E(\Delta)\right)+\epsilon A_X(E)-\mathrm{ord}_E(D)\ge0,
\]
which means that $(X,\Delta+D)$ is sublc.
\end{proof}

\begin{de}\label{dalpha}
Let $f\colon(X,\Delta)\to C$ be an algebraic fiber space subpair over a smooth curve $C$.
Suppose that $(X,\Delta)$ is sublc.
Set $B=\sum_{p\in C}(1-\mathrm{lct}_{(X,\Delta)}(f^*p))p$ as a $\mathbb{Q}$-divisor on $C$ and call this the {\it discriminant} divisor with respect to $f$.
\end{de}

In this situation, we note that $\alpha_{(C,B)}(L)=\frac{1}{\mathrm{deg}\,L}\min_{p\in C}\mathrm{lct}_{(X,\Delta)}(f^{-1}(p))$.
With this in mind, we prove the following result on the ``convergence of $\alpha$-invariant".
 
\begin{thm}\label{ctn}
Let $f\colon(X,\Delta,H)\to (C,L)$ be a polarized algebraic fiber space pair over a smooth curve $C$. 
Suppose that $(X,\Delta)$ is klt.
 Then, 
\[
\lim_{\epsilon\to 0}\alpha_{(X,\Delta)}(L+\epsilon H)=\alpha_{(C,B)}(L),
\]
where $B$ is the discriminant divisor defined as Definition \ref{dalpha}.
\end{thm}
 
\begin{proof}
We may assume that $\mathrm{deg}\,L=1$.
It is easy to see that $$\limsup_{\epsilon\to0}\alpha_{(X,\Delta)}(L+\epsilon H)\le\alpha_{(C,B)}(L)$$
by $\alpha_{(C,B)}(L)=\frac{1}{\mathrm{deg}\,L}\min_{p\in C}\mathrm{lct}_{(X,\Delta)}(f^{-1}(p))$.
 Thus, it suffices to show that
\begin{equation}\label{eq-liminf}
\liminf_{\epsilon\to 0}\alpha_{(X,\Delta)}(L+\epsilon H)\ge\alpha_{(C,B)}(L).
\end{equation}
In other words, we have only to show that for any sufficiently small $\delta>0$, there exists $\epsilon_0>0$ such that for any $\epsilon\in\mathbb{Q}\cap(0, \epsilon_0)$ and $D_\epsilon\in |L+\epsilon H|_{\mathbb{Q}}$, $(X,\Delta+(\alpha_{(C,B)}(L)-\delta) D_\epsilon)$ is lc. 
Take a log resolution $g:X'\to X$ of $(X,\Delta)$ and let $f'=f\circ g$. 
We may further assume that the reduced structure of any fiber of $f'$ is snc. 
Let $\Delta'$ be a $\mathbb{Q}$-divisor on $X'$ such that $g_*\Delta'=\Delta$ and
\[
K_{X'}+\Delta'=g^*(K_X+\Delta).
\]
Fix a very ample divisor $A$ on $X'$ and set $D_\epsilon':=g^*D_\epsilon$.
 Then, we decompose $D'_\epsilon=D'_{\epsilon,\mathrm{vert}}+D'_{\epsilon,\mathrm{hor}}$ (see the first paragraph of \S\ref{Notat}). 

In two paragraphs, we estimate the multiplicities of $D'_{\epsilon,\text{vert}}$.
Put $n$ as the dimension of $X$.
Let
\begin{equation}
    D'_{\epsilon,\mathrm{vert}}=\sum_{i=1}^r a_iF_i+\Theta,\label{eq--coeff-declaration}
\end{equation}
where each $F_i$ is a fiber of $f'$, $a_i\ge 0$ and $\Theta$ is an effective $\mathbb{Q}$-divisor whose support contains no fiber of $f'$. 
In this paragraph, we deal with $\Theta$'s coefficients
.
 We first note that there exist only finitely many singular fibers of $f'$.
 Let $\{f'^{-1}(c_s)\}_{s=1}^l$ be the set of all singular fibers and take
 one of them $f'^{-1}(c_s)$.
 Let $f'^{-1}(c_
 s)=\sum_{j=0}^k n_jC_j$ be the irreducible decomposition and $\sum b_jC_j$ be the restriction of $\Theta$ to the neighborhood of $f'^{-1}(c_s)$. 
 Here, one of $b_j$ is zero and by renumbering $j$, we may assume that $b_0=0$.
 Let $\mathrm{Span}_{\mathbb{R}}\langle C_j \rangle_{j=1}^k$ be an $\mathbb{R}$-vector space generated by $\{C_j\}_{j=1}^k$ and set a bilinear pairing $\langle\cdot,\cdot\rangle$ defined by $\langle E_1 ,E_2\rangle :=A^{n-2}\cdot E_1\cdot E_2$.
 By applying the Zariski lemma (cf.~\cite[Lemma 1]{LX}), $(\mathrm{Span}_{\mathbb{R}}\langle C_j \rangle_{j=1}^k,\sqrt{-\langle\cdot ,\cdot\rangle})$ is a finite dimensional normed space and hence the norm $\sqrt{-\langle\cdot ,\cdot\rangle}$ is equivalent to the supremum norm. 
 Therefore, there exists a constant $\beta>0$ such that 
 \begin{equation}
     -\left\langle\sum_{j=1}^kd_jC_j,\sum_{j=1}^k d_jC_j\right\rangle\ge \beta (\max\{ d_j\})^2\label{eq-riesz-1}
 \end{equation}
 for any $(d_j)_{j=1}^k\in\mathbb{R}^{k}$. 
 Since $D'_\epsilon$ is nef,
\begin{equation}
    \sum_{j=1}^k b_jC_j\cdot \left(\sum_{j=1}^k b_jC_j +D'_{\epsilon,\mathrm{hor}}\right)\cdot A^{n-2}=\sum_{j=1}^k b_jC_j\cdot g^*(\epsilon H+L)\cdot A^{n-2}\ge0.\label{eq-hodge-inequality-d_e}
\end{equation}
Note that if we set $m=\min_jn_j>0$ then $\frac{\max\{ b_j\}}{m}f'^{-1}(c_s)-\sum_{j=1}^k b_jC_j$ is effective. Thus, we obtain
\begin{align*}
\frac{\max\{ b_j\}}{m}\epsilon g^*H\cdot g^*L&\cdot A^{n-2}=\frac{\max\{ b_j\}}{m}g^*f^{-1}(c_s)\cdot g^*D_\epsilon\cdot A^{n-2}=\frac{\max\{ b_j\}}{m}f'^{-1}(c_s)\cdot D'_{\epsilon,\mathrm{hor}}\cdot A^{n-2}\\
&\ge \left(\sum_{j=1}^k b_jC_j\cdot D'_{\epsilon,\mathrm{hor}}\right)\cdot A^{n-2}\ge-\left\langle\sum b_jC_j,\sum b_jC_j\right\rangle\ge\beta (\max\{ b_j\})^2.
\end{align*}
Here, we apply (\ref{eq-riesz-1}) and (\ref{eq-hodge-inequality-d_e}) in the second line of the above inequality.
Therefore, 
$$\max\{ b_j\}\le \frac{\epsilon}{m\beta}g^*H\cdot f'^*L\cdot A^{n-2}.$$
 The constants $\beta$ and $m$ depend only on finitely many choices of irreducible components of $f'^{-1}(c_s)$.
 Hence, there exists a constant $M'>0$ such that $M'\epsilon\sum_{s=1}^lf'^{-1}(c_s)-\Theta$ is effective for any $D_{\epsilon}$.
 
 Next, we bound the coefficients $a_i$ in $D'_{\epsilon,\mathrm{vert}}=\sum_{i=1}^r a_iF_i+\Theta$ in (\ref{eq--coeff-declaration}).
 We have that 
 \[
 \left(\sum_{i=1}^r a_iF_i\right)\cdot A^{n-1}\le A^{n-1}\cdot g^*(\epsilon H+L) 
 \]
 and hence $\sum_{i=1}^ra_i\le1+\epsilon(\frac{A^{n-1}\cdot g^*H}{A^{n-1}\cdot f'^*L})$.
 By the fact that there are only finitely many singular fibers of $f'$, set $T:=\max_{c,Q}\mathrm{ord}_Q{f'^{-1}(c)}>0$ where $c\in C$ runs over all closed points and $Q$ runs over all irreducible components of $f'^{-1}(c)$. 
 Then we have that
 \[
\sum_{i=1}^r a_iF_i=\sum_{i=1}^r a'_iF_i+B',
 \]
 where $a'_i>0$ such that $\sum_{i=1}^r a'_i\le1$ and $B'$ is an effective snc $\mathbb{Q}$-divisor such that $\mathrm{ord}_P(B')\le M''\epsilon$ for any closed point $P\in X'$, where $M''=nT(\frac{A^{n-1}\cdot g^*H}{A^{n-1}\cdot f'^*L})$.
 We note that in particular, it holds that $0<a'_i\le1$.
 
 Thus, we have by Lemma \ref{lem-claim}
 and by the above arguments that there exists a positive constant $M>0$ such that for any sufficiently small rational number $\epsilon>0$ and $D_{\epsilon}\in|\epsilon H+L|_{\mathbb{Q}}$,
 \[
 g^* D_{\epsilon}= \sum_{i=1}^r d_iF_i+\Xi,
 \]
 where $0<d_i\le1$, $F_i$ is a fiber of $f'$ and $\Xi$ is a $\mathbb{Q}$-divisor such that $\mathrm{ord}_P(\Xi)\le M\epsilon$ for any closed point $P\in X'$.
 By the definition of $\alpha_{(C,B)}(L)$ and the property of $X'$, $(X',\Delta'+(\alpha_{(C,B)}(L)-\delta)\sum_{i=1}^r d_iF_i)$ is log smooth and $\delta$-sublc if we choose $\delta$ small enough. 
 Thus, if $0<\epsilon<\epsilon_0:=\min\{1,\frac{\delta
 }{M(\alpha_{(C,B)}(L)-\delta)}\}$, then we have that $(\alpha_{(C,B)}(L)-\delta)\mathrm{ord}_P(\Xi)\le \delta$ for any closed point $P\in X'$.
 By Lemma \ref{lem--alpha-delta-conv-final-part}, 
 $(X',\Delta'+(\alpha_{(C,B)}(L)-\delta)g^* D_{\epsilon})$ is sublc.
 This means that for any $\epsilon\in(0,\epsilon_0)\cap\mathbb{Q}$ and $D_\epsilon\in |L+\epsilon H|_{\mathbb{Q}}$, $(X,\Delta+(\alpha_{(C,B)}(L)-\delta) D_\epsilon)$ is lc. 
 Hence, we have (\ref{eq-liminf})
 and complete the proof.
\end{proof}

By applying Theorem \ref{ctn}, we obtain the following theorem for the $\delta$-invariant.
\begin{thm}\label{ctnd}
Let $f\colon(X,\Delta,H)\to (C,L)$ be a polarized algebraic fiber space pair. Suppose that $(X,\Delta)$ is a klt pair, $C$ is a smooth curve and $B$ is the discriminant divisor.
Then
\[
\lim_{\epsilon\to 0}\delta_{(X,\Delta)}(\epsilon H+L)=\delta_{(C,B)}(L).
\]
\end{thm}

\begin{proof}
By Example \ref{ex-delta-alpha-curve}, $\delta_{(C,B)}(L)=2\alpha_{(C,B)}(L)$ and 
it is enough to show that
\[
\lim_{\epsilon\to 0}\delta_{(X,\Delta)}(\epsilon H+L)=2\alpha_{(C,B)}(L)
\]
instead.
We may assume that $L=f^*c$ for some general point $c\in C$ since the $\delta_{(X,\Delta)}(\epsilon H+ L)$ depends only on the numerical equivalence class of $L$.
First, we see that
\begin{equation}
\limsup_{\epsilon\to0}\delta_{(X,\Delta)}(\epsilon H+L)\le2\alpha_{(C,B)}(L).\label{eq-first}
\end{equation}
Indeed, recall that $\alpha_{(C,B)}(L)=\mathrm{lct}_{(X,\Delta)}(F)$ for some fiber $F=f^{-1}(p)$.
For any $\delta>0$, pick a prime divisor $E$ over $X$ such that $A_{(X,\Delta+(\alpha_{(C,B)}(L)+\delta)F)}(E)<0$.
We see by definition that
\[
S_{\epsilon H+L}(E)\ge\frac{\mathrm{ord}_E(F)}{(\epsilon H+L)^n}\int^{\infty}_0\mathrm{vol}(\epsilon H+L-xF)dx.
\]
It follows that
\[
\int^{\infty}_0\mathrm{vol}(\epsilon H+L-xF)dx=\int^{\infty}_0\mathrm{vol}(\epsilon H+(1-x)L)dx
\]
from the fact that the volume depends only on the numerical class (cf.~\cite[Theorem 1.4.40]{Laz}).
On the other hand, we have
\begin{align*}
\mathrm{vol}(\epsilon H+(1-x)L)=n(1-x)\epsilon^{n-1}H^{n-1}\cdot L+\epsilon^n H^n
\end{align*}
for any $x\in\mathbb{Q}\cap(0,1)$.
Take $\tau\in\mathbb{Q}_{>0}$ such that $H-\tau L$ is not pseudoeffective.
Then 
\begin{align*}
    \frac{1}{2}\epsilon^{n-1}(H^{n-1}\cdot L)+\epsilon^nH^n&=\int^1_{0}\mathrm{vol}(\epsilon H+(1-x)L)dx\le\int^\infty_{0}\mathrm{vol}(\epsilon H+(1-x)L)dx\\
    &=\int^{1+\epsilon\tau}_{0}\mathrm{vol}(\epsilon H+(1-x)L)dx\le\frac{1}{2}\epsilon^{n-1}(H^{n-1}\cdot L)+(\epsilon^n+\epsilon^{n+1}\tau)H^n
\end{align*}
and hence we have that
\begin{equation}\label{eq--infinity--integral}
    \lim_{\epsilon\to0}\frac{\int^{\infty}_0\mathrm{vol}(\epsilon H+L-xF)dx}{(\epsilon H+L)^n}=\lim_{\epsilon\to0}\frac{\int^{\infty}_0\mathrm{vol}(\epsilon H+(1-x)L)dx}{(\epsilon H+L)^n}=\frac{1}{2}.
\end{equation}
Thus, we have  
\[
A_{(X,\Delta)}(E)<2(\alpha_{(C,B)}(L)+\delta)S_{\epsilon H+L}(E)
\]
for any sufficiently small rational number $\epsilon>0$. 
This means that 
\[
\limsup_{\epsilon\to0}\delta_{(X,\Delta)}(\epsilon H+L)\le2(\alpha_{(C,B)}(L)+\delta)
\]
for any $\delta>0$ and
that (\ref{eq-first}) holds.

From now on, we show
\begin{equation}\label{eq-dliminf}
\liminf_{\epsilon\to0}\delta_{(X,\Delta)}(\epsilon H+L)\ge2\alpha_{(X,\Delta)}(L).
\end{equation}
    First, we show the following claim.
    \begin{claim}
For any sufficiently small $\epsilon\in\mathbb{Q}_{>0}$, let \[
C_{\epsilon}:=\frac{\int^{\infty}_0\mathrm{vol}(\epsilon H+(1-x)L)dx}{(\epsilon H+L)^n}.
\]
Then for any prime divisor $E$ over $X$, we obtain that
\[
S_{\epsilon H+L}(\mathrm{ord}_E)\le T_{\epsilon H+(1-C_\epsilon)L}(\mathrm{ord}_E).
\]
    \end{claim}
    \begin{proof}[Proof of Claim]
    Take a irreducible and normal fiber $F$ of $f$ that does not contain the generic point of the center of $E$.
For any $m\in\mathbb{Z}_{>0}$ such that $m\epsilon H+mL$ is a Cartier divisor, we take a basis $\{s_i\}$ of $H^0(X,m\epsilon H+mL)$ compatible with $E$ and $F$ by Lemma \ref{lem--compatible--basis}. 
Let $D$ be the $m$-basis type divisor of $\epsilon H+L$ defined by $\{s_i\}$.
Then we have by Lemma \ref{lem--compatible--basis} that
\[
\mathrm{ord}_F(D)=S^m_{\epsilon H+L}(\mathrm{ord}_F).
\]
This means that $D-S^m_{\epsilon H+L}(\mathrm{ord}_F)F$ is effective and 
\[
S^{m}_{\epsilon H+L}(\mathrm{ord}_E)=\mathrm{ord}_E(D)=\mathrm{ord}_E(D-S^m_{\epsilon H+L}(\mathrm{ord}_F)F).
\]
We note that $\lim_{m\to\infty}S^m_{\epsilon H+L}(\mathrm{ord}_F)=S_{\epsilon H+L}(\mathrm{ord}_F)=C_{\epsilon}$.
By (\ref{eq--infinity--integral}), we have $\lim_{\epsilon\to 0}C_{\epsilon}=\frac{1}{2}$. 
Thus, we may assume that $C_\epsilon<1$ and $S^m_{\epsilon H+L}(\mathrm{ord}_F)<1$ by replacing $m$ with a larger integer and $\epsilon$ with a smaller positive rational number.
Then, we obtain that
\[
S^{m}_{\epsilon H+L}(\mathrm{ord}_E)\le T_{\epsilon H+L-S^m_{\epsilon H+L}(\mathrm{ord}_F)F}(\mathrm{ord}_E).
\]
It is not hard to see that \[T_{\epsilon H+L-S^m_{\epsilon H+L}(\mathrm{ord}_F)F}(\mathrm{ord}_E)=T_{\epsilon H+(1-S^m_{\epsilon H+L}(\mathrm{ord}_F))L}(\mathrm{ord}_E)\]
by $L\equiv F$.
If $C_{\epsilon}\ge S^m_{\epsilon H+L}(\mathrm{ord}_F)$, we obtain that
\begin{align*}
T_{\epsilon H+(1-C_\epsilon)L}(\mathrm{ord}_E)&=\frac{1-C_\epsilon}{1-S^m_{\epsilon H+L}(\mathrm{ord}_F)}T_{\frac{\epsilon(1-S^m_{\epsilon H+L}(\mathrm{ord}_F))}{1-C_\epsilon} H+(1-S^m_{\epsilon H+L}(\mathrm{ord}_F))L}(\mathrm{ord}_E)\\
&\ge \frac{1-C_\epsilon}{1-S^m_{\epsilon H+L}(\mathrm{ord}_F)}T_{\epsilon H+(1-S^m_{\epsilon H+L}(\mathrm{ord}_F))L}(\mathrm{ord}_E).
\end{align*}
Otherwise, we have $T_{\epsilon H+(1-C_\epsilon)L}(\mathrm{ord}_E)\ge T_{\epsilon H+(1-S^m_{\epsilon H+L}(\mathrm{ord}_F))L}(\mathrm{ord}_E)$.
Thus, we obtain that
\begin{align*}
S_{\epsilon H+L}(\mathrm{ord}_E)&=\lim_{m\to\infty}S^m_{\epsilon H+L}(\mathrm{ord}_E)\le \lim_{m\to\infty}\min\left\{1,\frac{1-C_\epsilon}{1-S^m_{\epsilon H+L}(\mathrm{ord}_F)}\right\}T_{\epsilon H+(1-C_\epsilon)L}(\mathrm{ord}_E)\\
&=T_{\epsilon H+(1-C_\epsilon)L}(\mathrm{ord}_E).
\end{align*}
  We complete the proof of Claim.  \end{proof}
  Note that $T_{\epsilon H+(1-C_\epsilon)L}(\mathrm{ord}_E)=(1-C_{\epsilon})T_{\frac{\epsilon}{1-C_\epsilon}H+L}(\mathrm{ord}_E)$.
  Recall that $\lim_{\epsilon\to 0}C_{\epsilon}=\frac{1}{2}$.
  By Claim and Theorem \ref{ctn}, we obtain that 
  \begin{align*}
\liminf_{\epsilon\to 0}\delta_{(X,\Delta)}(\epsilon H+L) &=\liminf_{\epsilon\to0}\inf_{E}\frac{A_{(X,\Delta)}(E)}{S_{\epsilon H+L}(\mathrm{ord}_E)}     \ge \liminf_{\epsilon\to0}(1-C_{\epsilon})^{-1}\inf_{E}\frac{A_{(X,\Delta)}(E)}{T_{\frac{\epsilon}{1-C_\epsilon}H+L}(\mathrm{ord}_E)} \\
&=\liminf_{\epsilon\to0}(1-C_{\epsilon})^{-1}\alpha_{(X,\Delta)}\left(\frac{\epsilon}{1-C_\epsilon}H+L\right)=2\alpha_{(C,B)}(L),
  \end{align*}
  where $E$ runs over all prime divisors over $X$.
  This is nothing but (\ref{eq-dliminf}).
  Then
(\ref{eq-first}) and (\ref{eq-dliminf}) complete the proof.
\end{proof}

\begin{rem}
There is a fact that the alpha invariant is continuous on the K\"{a}hler cone \cite[Theorem 3.5]{Der}. 
For the delta invariant, a similar result known by Zhang (cf.~\cite{Z2}).
Due to Theorems \ref{ctn} and \ref{ctnd}, we can extend these invariants continuously to the fiber class.
\end{rem}

Now, we are ready to show Theorem \ref{dd}.
First, we prepare the following lemma on J-stability.
  
\begin{lem}\label{Jcomp}
Let $f:(X,H)\to (C,L)$ be a polarized algebraic fiber space.
Suppose that $C$ is a smooth curve and $H$ is ample. 
Then for any sufficiently small rational number $\epsilon>0$ and semiample test configuration $(\mathcal{X},\mathcal{M})$ for $(X,\epsilon H+L)$,
\[
(\mathcal{J}^{-f^*L})^{\mathrm{NA}}(\mathcal{X},\mathcal{M})> -(I^\mathrm{NA}(\mathcal{X},\mathcal{M})-J^\mathrm{NA}(\mathcal{X},\mathcal{M})).
\]
\end{lem}
 
\begin{proof}
It suffices to show that $(X,\epsilon H+L)$ is uniformly J$^{H}$-stable since $-L+(\epsilon H+L)= \epsilon H$. By the ampleness of
\[
n\frac{H\cdot (\epsilon H+L)^{n-1}}{(\epsilon H+L)^n}(\epsilon H+L)-(n-1)H=\frac{\epsilon^2(H^n)H+n((n-1)H^{n-1}\cdot L+\epsilon H^n)L}{n\epsilon H^{n-1}\cdot L+\epsilon^2 H^n}
\]
and by Remark \ref{rem--useful}, $(X,\epsilon H+L)$ is uniformly J$^{H}$-stable.
\end{proof}

 The following gives a sufficient condition for uniform adiabatic K-stability.
 
\begin{thm}\label{ctn3}
Let $f:(X,\Delta,H)\to(\mathbb{P}^1,\mathcal{O}(1))$ be a polarized klt-trivial fibration, let $M$ be the moduli divisor and let $B$ be the discriminant divisor. 
Suppose that $-(K_{\mathbb{P}^1}+M+B)$ is ample. 
If $(\mathbb{P}^1,B,M,\mathcal{O}(1))$ is uniformly K-stable, then
$(X,\Delta,H)$ is uniformly adiabatically K-stable over $(\mathbb{P}^1,\mathcal{O}(1))$. Moreover, if $\Delta=0$, $X$ is smooth and $(\mathbb{P}^1,B,M,\mathcal{O}(1))$ is uniformly adiabatically K-stable, then $(X,H)$ has cscK metrics adiabatically.
\end{thm}
\begin{proof}
Let $\mathrm{deg}\, (B+M)=s\ge0$ and then we have that
\[
\delta_{(C,B)}(\mathcal{O}(1))=(2-s)\delta_{(C,B)}(-(K_C+M+B))>2-s
\]
by Corollary \ref{appc}.
Therefore, there exist positive constants $\epsilon_0$ and $\delta$ such that 
\[
\delta_{(X,\Delta)}(\epsilon H+\mathcal{O}(1))\ge2-s+\delta
\]
for any $\epsilon\in(0,\epsilon_0)\cap\mathbb{Q}$ by Theorem \ref{ctnd}.
On the other hand, 
\[
K_X+\Delta\sim_{\mathbb{Q}}f^*(K_{\mathbb{P}^1}+M+B)\sim_{\mathbb{Q}} -(2-s)f^*\mathcal{O}(1).
\]
 Therefore,
\[
(\mathcal{J}^{K_X+\Delta})^\mathrm{NA}(\mathcal{X},\mathcal{M})\ge -(2-s)(I^\mathrm{NA}(\mathcal{X},\mathcal{M})-J^\mathrm{NA}(\mathcal{X},\mathcal{M}))
\]
for any $\epsilon\in(0,\epsilon_0)\cap\mathbb{Q}$ and semiample test configuration $(\mathcal{X},\mathcal{M})$ for $(X,\epsilon H+\mathcal{O}(1))$ by Lemma \ref{Jcomp}. Furthermore, we have by Theorem \ref{bhjz} that
\begin{align*}
    M_\Delta^{\mathrm{NA}}(\mathcal{X},\mathcal{M})&=H^{\mathrm{NA}}_\Delta(\mathcal{X},\mathcal{M})+(\mathcal{J}^{K_X+\Delta})^\mathrm{NA}(\mathcal{X},\mathcal{M})\\
    &\ge\delta(I^\mathrm{NA}(\mathcal{X},\mathcal{M})-J^\mathrm{NA}(\mathcal{X},\mathcal{M})).
\end{align*}
This means that $(X,\Delta,H)$ is uniformly adiabatically K-stable over $(C,L)$.
We complete the proof of the first assertion.
 
On the other hand, we can show that the K-energy is coercive by \cite[Proposition 3.5]{Z} and \cite[Theorem 1.1]{C} as the proof of the first assetion. Thus, the second assertion follows from \cite[Theorem 4.1]{Ch}.
\end{proof}

\begin{proof}[Proof of Theorem \ref{dd}]
The assertion for the case when $K_X+\Delta$ is nef follows from Proposition \ref{bhj9394} and Corollary \ref{cor--minimal-stable}.
On the other hand, the assertion for the case when $K_X+\Delta$
 is not nef follows from Theorems \ref{stp1} and \ref{ctn3}.
 \end{proof}

\subsection{Applications to Rational Elliptic Surfaces}\label{RES}

We apply Theorems \ref{dd} and \ref{ff} to rational elliptic surfaces.
For this, we first classify K-stable log-twisted Fano curves $(\mathbb{P}^1,B,M,L)$ associated with rational elliptic surfaces.
\begin{prop}\label{P^1}
Let $f:X\to\mathbb{P}^1$ be a rational elliptic surface of index $m(X)$, $B=\sum_{Q\in\mathbb{P}^1}s_QQ$ be the discriminant divisor and $M$ be the moduli divisor (cf.~Definition \ref{de-mult}).

Then $(\mathbb{P}^1,B,M,\mathcal{O}(1))$ is uniformly K-stable (resp, K-semistable, K-unstable) if and only if $\max_Qs_Q< \frac{2m(X)-1}{2m(X)}$ (resp., $\max_Qs_Q\le \frac{2m(X)-1}{2m(X)}$, $\max_Qs_Q> \frac{2m(X)-1}{2m(X)}$).
\end{prop}
\begin{proof}
It immediately follows from Corollary \ref{appc}.
\end{proof}
We obtain the following by the Kodaira canonical bundle formula.
\begin{cor}\label{II}
Let $f:X\to \mathbb{P}^1$ be a rational elliptic surface.
Then the log twisted pair $(\mathbb{P}^1,B,M,\mathcal{O}(1))$ induced by $f$ is K-unstable if and only if one of the following holds.
\begin{itemize}
 \item $m(X)=1$ and $X$ admits one of $II^*$, $III^*$ or $IV^*$-type fibers,
 \item $m(X)=2$ and $X$ admits at least one $II^*$-type fiber.
 \end{itemize}
 On the other hand, $(\mathbb{P}^1,B,M,\mathcal{O}(1))$ is K-stable if and only if one of the following holds.
 \begin{itemize}
     \item $m(X)=1$ and $X$ has at most reduced fibers,
\item $m(X)=2$ and $X$ has no $II^*$ or $III^*$-type fiber,
\item $m(X)=3$ and $X$ has no $II^*$-type fiber,
\item $m(X)\ge 4$.
 \end{itemize}
\end{cor}
\begin{proof}
This immediately follows from Proposition \ref{P^1}.
\end{proof}
Therefore, we obtain the following by Theorems \ref{dd}, \ref{ff} and Corollary \ref{II}.
\begin{cor}\label{K-fib}
Let $f\colon X\to \mathbb{P}^1$ be a smooth rational elliptic surface with an ample $\mathbb{Q}$-Cartier $\mathbb{Q}$-divisor $H$ on $X$. Then the following hold.
\begin{itemize}
\item If $m(X)=1$ and $X$ admits one of $II^*$, $III^*$ or $IV^*$-type fibers, then $(X,H)$ is adiabatically K-unstable over $(\mathbb{P}^1,\mathcal{O}(1))$.
\item If $m(X)=2$ and $X$ admits at least one $II^*$-type fiber, then $(X,H)$ is adiabatically K-unstable over $(\mathbb{P}^1,\mathcal{O}(1))$.
\item If $m(X)=1$, then $X$ has at most reduced fibers if and only if $(X,H)$ is uniformly adiabatically K-stable. If $X$ is further smooth, $(X,H)$ has cscK metrics adiabatically.
\item If $m(X)=2$, then $X$ has no $II^*$ or $III^*$-type fiber if and only if $(X,H)$ is uniformly adiabatically K-stable. If $X$ is further smooth, $(X,H)$ has cscK metrics adiabatically.
\item If $m(X)=3$, then $X$ has no $II^*$-type fiber if and only if $(X,H)$ is uniformly adiabatically K-stable. If $X$ is further smooth, $(X,H)$ has cscK metrics adiabatically.
\item If $m(X)\ge 4$, then $(X,H)$ is uniformly adiabatically K-stable. If $X$ is further smooth, $(X,H)$ has cscK metrics adiabatically.
\end{itemize}
\end{cor}

\begin{rem}\label{rem-final}
It is well-known that there exists a smooth rational elliptic surface $f\colon X\to\mathbb{P}^1$ with a $II^*$-type fiber of $m(X)=1$.
Let $B$ (resp.~$M$) be the discriminant (resp.~moduli) divisor on $\mathbb{P}^1$.
Note that $\mathrm{deg}(B+M)=1$.
By Corollary \ref{K-fib} below, we have that $f\colon (X,H)\to (\mathbb{P}^1,\mathcal{O}(1))$ is adiabatically K-unstable.
However, the twisted pair $(\mathbb{P}^1,0,B+M,\mathcal{O}(1))$ is uniformly K-stable.
Indeed, $(\mathbb{P}^1,0,T,\mathcal{O}(1))$ is uniformly K-stable for any $\mathbb{Q}$-line bundle of $\mathrm{deg}\,T>0$ by Corollary \ref{appc}.
 Hence, it is better to consider log-twisted K-stability of the base scheme to detect adiabatic K-stability of an arbitrary klt-trivial fibration at least when the base is a curve (cf., Theorem \ref{can}, \cite[Corollary 4.3]{DR2}).
 \end{rem}

\appendix
\def\thesection{\Alph{section}}

\section{Auxiliary results on log-twisted Fano pairs}\label{appendices}
  In this appendix, we will prove some results to apply in \S \ref{sec-3}.
  The main results are Theorems \ref{bhjz} and \ref{appendix}.

\subsection{Ding stability of log-twisted Fano pairs and $\delta$-invariant}\label{sec-logtwisteddelta}
 
In this subsection, we show that the log-twisted $\beta$-invariant classifies Ding-stable log-twisted Fano pairs $(X,B,T,L)$ as \cite{Fjt}.
 First, we introduce the log-twisted Ding invariant of a test configuration as follows.

\begin{de}
    Let $(X,B)$ be an $n$-dimensional subklt pair such that $L:=-(K_X+B)$ is ample and $(\mathcal{X},\mathcal{L})$ be a semiample test configuration for $(X,L)$.
    We set the {\it log Ding invariant} as 
    \[
\mathrm{Ding}_{B}(\mathcal{X},\mathcal{L})=-\frac{\mathcal{L}^{n+1}}{(n+1)L^n}-1+\mathrm{lct}_{(\mathcal{X},\mathcal{B}+D)}(\mathcal{X}_0),
\]
where $\mathcal{B}$ is the strict transform of $B_{\mathbb{P}^1}$ on $\mathcal{X}$ and $D$ is a $\mathbb{Q}$-divisor on $\mathcal{X}$ defined by
\begin{align*}
\mathrm{Supp}\, D&\subset\mathcal{X}_0,\quad\mathrm{and}\\
D&\sim_{\mathbb{Q}} -(K_{\mathcal{X}/\mathbb{P}^1}+\mathcal{B})-\mathcal{L}.
\end{align*}
Note that $D$ is uniquely determined.
Thus, we simply denote that $D=-(K_{\mathcal{X}/\mathbb{P}^1}+\mathcal{B})-\mathcal{L}$. 

We set {\it the log $\beta$-invariant} of a prime divisor $F$ over $X$ with respect to $(X,B)$ as
\[
\beta_{(X,B)}(F)=A_{(X,B)}(F)\mathrm{vol}(L)-\int^{T_L(F)}_0\mathrm{vol}(L-xF)dx.
\] 
\end{de}

\begin{de}\label{detw}
Let $(X,B,T,L=-K_X-B-T)$ be an $n$-dimensional log-twisted Fano pair such that $(\mathcal{X},\mathcal{L})$ be a normal semiample test configuration for $(X,L)$. Let also $\rho:\mathcal{X}\dashrightarrow X_{\mathbb{P}^1}$ be the {\it canonical birational map}, i.e., $\rho$ is the identity over $\mathbb{P}^1\setminus \{0\}$.
Take a general $\mathbb{Q}$-divisor $T_1\sim_{\mathbb{Q}}T$.
We may assume that $(X,B+T_1)$ is subklt. 
Then the {\it log-twisted Ding invariant} is defined as
\[
\mathrm{Ding}_{(B,T)}(\mathcal{X},\mathcal{L})=-\frac{\mathcal{L}^{n+1}}{(n+1)L^n}-1+\mathrm{lct}_{(\mathcal{X},\mathcal{B}+D+\rho_*^{-1}T_{1,\mathbb{P}^1})}(\mathcal{X}_0),
\]
where $\mathcal{B}$ is the strict transform of $B_{\mathbb{P}^1}$ on $\mathcal{X}$ and $D$ is a $\mathbb{Q}$-divisor on $\mathcal{X}$ defined by
\begin{align*}
\mathrm{Supp}\, D&\subset\mathcal{X}_0,\quad\mathrm{and}\\
D&\sim_{\mathbb{Q}} -(K_{\mathcal{X}/\mathbb{P}^1}+\mathcal{B}+\rho_*^{-1}T_{1,\mathbb{P}^1})-\mathcal{L}.
\end{align*}
Note that $D$ is uniquely determined and independent from the choice of general $T_1$. 
Indeed, we see that for another sufficiently general $\mathbb{Q}$-divisor $T_2\sim_{\mathbb{Q}}T$, $\rho_*^{-1}T_{1,\mathbb{P}^1}\sim\rho_*^{-1}T_{2,\mathbb{P}^1}$  as \cite[Lemma 2.12]{BLZ}.
Thus, we simply denote that $D=-(K_{\mathcal{X}/\mathbb{P}^1}+\mathcal{B}+\rho_*^{-1}T_{1,\mathbb{P}^1})-\mathcal{L}$. 
On the other hand, let $F$ be a prime divisor over $X$. 
Then {\it the log-twisted $\beta$-invariant} of $F$ with respect to $(X,B,T)$ is defined as
\[
\beta_{(X,B,T)}(F)=A_{(X,B)}(F)\mathrm{vol}(L)-\int^{T_L(F)}_0\mathrm{vol}(L-xF)dx=\mathrm{vol}(L)(A_{(X,B)}(F)-S_L(F)).
\]
We also set
\[
j_{L}(F)=T_L(F)\mathrm{vol}(L)-\int^{T_L(F)}_0\mathrm{vol}(L-xF)dx.
\]
\end{de}

 The following lemma shows that the invariants of Definition \ref{detw} are independent of the choice of a general divisor.
\begin{lem}\label{dtw2}
Let $(X,B,T,L=-K_X-B-T)$ be a log-twisted Fano pair and $(\mathcal{X},\mathcal{L})$ a normal semiample test configuration. 
Then, $\mathrm{Ding}_{B+T_1}(\mathcal{X},\mathcal{L})$ is independent of the choice of a general $\mathbb{Q}$-divisor $T_1\sim_{\mathbb{Q}}T$. 
Moreover, for any prime divisor $F$ over $X$ and general $\mathbb{Q}$-divisor $T_1\sim_{\mathbb{Q}}T$, 
\begin{align*}
\beta_{(X,B,T)}(F)=\beta_{(X,B+T_1)}(F).
\end{align*}
\end{lem}
 
\begin{proof}
We first deal with the first assertion.
It is easy to see as \cite[Proposition 2.5 (2)]{Fjt} that if $g:\mathcal{Z}\to \mathcal{X}$ is a canonical morphism of test configurations for $X$ and $\mathcal{Z}$ dominates $X_{\mathbb{P}^1}$ and $T_3\sim_{\mathbb{Q}}T$ satisfies that $(X,B+T_3)$ is subklt, then  $$\mathrm{Ding}_{B+T_3}(\mathcal{X},\mathcal{L})=\mathrm{Ding}_{B+T_3}(\mathcal{Z},g^*\mathcal{L}).$$
Fix such $\mathcal{Z}$.
Take $T_1\sim_{\mathbb{Q}}T$ general enough such that $\rho_{\mathcal{Z}}^*T_{1,\mathbb{P}^1}$ contains no component of $\mathcal{Z}_0$, where $\rho_{\mathcal{Z}}:\mathcal{Z}\to X_{\mathbb{P}^1}$ is the canonical morphism.
This means that $\rho_{\mathcal{Z}}^*T_{1,\mathbb{P}^1}=(\rho_{\mathcal{Z}})^{-1}_*T_{1,\mathbb{P}^1}$.
Let $\mathcal{B}_{\mathcal{Z}}$ be the strict transform of $B\times\mathbb{P}^1$ and $D_{\mathcal{Z}}$ be the $\mathbb{Q}$-divisor on $\mathcal{Z}$ defined by 
\[
D_{\mathcal{Z}}=-(K_{\mathcal{Z}/\mathbb{P}^1}+\mathcal{B}_{\mathcal{Z}}+\rho_{\mathcal{Z}}^*T_{1,\mathbb{P}^1})-g^*\mathcal{L}.
\]
Let $D=-(K_{\mathcal{X}/\mathbb{P}^1}+\mathcal{B}+\rho_*^{-1}T_{1,\mathbb{P}^1})-\mathcal{L}$.
Then we see that $K_\mathcal{Z}+\mathcal{B}_{\mathcal{Z}}+D_{\mathcal{Z}}+\rho_{\mathcal{Z}}^*T_{1,\mathbb{P}^1}=g^*(K_\mathcal{X}+\mathcal{B}+D+\rho_*^{-1}T_{1,\mathbb{P}^1})$.
Thus,
\[
\mathrm{lct}_{(\mathcal{X},\mathcal{B}+D+\rho_*^{-1}T_{1,\mathbb{P}^1})}(\mathcal{X}_0)=\mathrm{lct}_{(\mathcal{Z},\mathcal{B}_{\mathcal{Z}}+D_{\mathcal{Z}}+\rho_{\mathcal{Z}}^*T_{1,\mathbb{P}^1})}(\mathcal{Z}_0).
\]
Here, we see that the right hand side is independent of the choice of general $T_1$.
Indeed, take a log resolution $\sigma:\mathcal{Y}\to\mathcal{Z}$ of $(\mathcal{Z},\mathcal{B}_{\mathcal{Z}}+D_{\mathcal{Z}})$. 
Let $\rho_{\mathcal{Y}}=\rho_{\mathcal{Z}}\circ\sigma$ and $\mathcal{B}_Y$ be the $\mathbb{Q}$-divisor such that
\[
K_{\mathcal{Y}}+\mathcal{B}_Y=\sigma^*(K_{\mathcal{Z}}+\mathcal{B}+\mathcal{D}_{\mathcal{Z}}).
\]
Thus we see by the Bertini theorem that
\[
\mathrm{lct}_{(\mathcal{X},\mathcal{B}+D+\rho^{-1}_*T_{1,\mathbb{P}^1})}(\mathcal{X}_0)=\mathrm{lct}_{(\mathcal{Z},\mathcal{B}_{\mathcal{Z}}+D_{\mathcal{Z}}+\rho_{\mathcal{Z}}^*T_{1,\mathbb{P}^1})}(\mathcal{Z}_0)=\mathrm{lct}_{(\mathcal{Y},\mathcal{B}_Y+\rho_{\mathcal{Y}}^*T_{1,\mathbb{P}^1})}(\mathcal{Y}_0)
\]
is independent from general $T_1$ such that $(\rho_{\mathcal{Y}})^*T_{1,\mathbb{P}^1}+\mathcal{Y}_0+\mathcal{B}_Y$ is an snc divisor, and $(\rho_{\mathcal{Y}})^*T_{1,\mathbb{P}^1}$ and $\mathcal{Y}_0+\mathcal{B}_Y$ have no common component.

For the second assertion, we have only to choose $T_1$ so that the support of $T_1$ does not contain the center of $F$.
\end{proof}

\begin{de}
Let $(X,B,T,L=-(K_X+B+T))$ be a log-twisted Fano pair
. Then, we say that $(X,B,T,L)$ is {\it Ding-semistable} if $\mathrm{Ding}_{(B,T)}(\mathcal{X},\mathcal{L})\ge0$ for any normal semiample test configuration $(\mathcal{X},\mathcal{L})$.
We also say that $(X,B,T,L)$ is {\it uniformly Ding-stable} if there exists $\epsilon>0$ such that $\mathrm{Ding}_{(B,T)}(\mathcal{X},\mathcal{L})\ge\epsilon(\mathcal{J}^{L})^{\mathrm{NA}}(\mathcal{X},\mathcal{L})$ for any normal semiample test configuration $(\mathcal{X},\mathcal{L})$.
\end{de}

Next, we check that \cite[Proposition 2.5]{Fjt} also holds for the log-twisted Ding invariant.

\begin{prop}\label{fjtprop}
Let $(X,B,T,L=-(K_X+B+T))$ be a log-twisted Fano pair
. Then the following hold.
\begin{enumerate}
\item Let $(\mathcal{Y},\mathcal{L}_{\mathcal{Y}})$ and $(\mathcal{X},\mathcal{L})$ be normal semiample test configurations for $(X,L)$. 
Suppose that there exists a canonical birational morphism $\pi\colon\mathcal{Y}\to\mathcal{X}$ such that $\mathcal{L}_{\mathcal{Y}}=\pi^*\mathcal{L}$ (cf. Definition \ref{nadef}). Then, 
\begin{align*}
\mathrm{Ding}_{(B,T)}(\mathcal{Y},\mathcal{L}_{\mathcal{Y}})=\mathrm{Ding}_{(B,T)}(\mathcal{X},\mathcal{L}).
\end{align*}
\item Let $(\mathcal{X},\mathcal{L})$ be a normal semiample test configuration for $(X,L)$ and $(\mathcal{X}^{(d)},\mathcal{L}^{(d)})$ be the normalized base change of $(\mathcal{X},\mathcal{L})$ via the $d$-th power map of $\mathbb{P}^1$ for any $d\in\mathbb{Z}_{>0}$ (cf.~Notation \ref{rramamra}). Then,
\[
\mathrm{Ding}_{(B,T)}(\mathcal{X}^{(d)},\mathcal{L}^{(d)})=d\,\mathrm{Ding}_{(B,T)}(\mathcal{X},\mathcal{L}).
\]
\item Let $(\mathcal{X},\mathcal{L})$ be a normal semiample test configuration for $(X,L)$. Then, $\mathrm{Ding}_{(B,T)}(\mathcal{X},\mathcal{L})\le\mathrm{DF}_{(B,T)}(\mathcal{X},\mathcal{L})$ and equality holds if and only if $(\mathcal{X},\rho^{-1}_*T_{1,\mathbb{P}^1}+\mathcal{B}+\mathcal{X}_0)$ is sublc where $\mathcal{B}$ is the strict transform of $B\times\mathbb{P}^1$, $\rho:\mathcal{X}\dashrightarrow X_{\mathbb{P}^1}$ is the canonical birational map, and $\mathcal{L}\sim_{\mathbb{P}^1,\mathbb{Q}}-(K_{\mathcal{X}}+\rho^{-1}_*T_{1,\mathbb{P}^1}+\mathcal{B})$ for general $T_1\sim_{\mathbb{Q}}T$.
\end{enumerate}
\end{prop}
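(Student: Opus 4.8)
The plan is to reduce Proposition \ref{fjtprop} to the corresponding logarithmic statements in \cite{Fjt} and \cite{BHJ} by invoking Lemma \ref{dtw} and Lemma \ref{dtw2}, which allow us to replace the twist $T$ by a sufficiently general $\mathbb{Q}$-divisor $T_1\sim_{\mathbb{Q}}T$ without changing any of the three functionals. Thus, throughout, I would fix a general $T_1\sim_{\mathbb{Q}}T$ chosen simultaneously general enough for all the valuations and test configurations appearing in a given item, so that $\mathrm{Ding}_{(B,T)}=\mathrm{Ding}_{B+T_1}$, $M^{\mathrm{NA}}_{(B,T)}=M^{\mathrm{NA}}_{B+T_1}$ and $\mathrm{DF}_{(B,T)}=\mathrm{DF}_{B+T_1}$, and $(X,B+T_1)$ is subklt with $L\sim_{\mathbb{Q}}-K_{(X,B+T_1)}$. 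The point is that the subpair $(X,B+T_1)$ is an honest (sub)klt log Fano pair in the sense of \cite{Fjt}, \cite{BHJ}, so every statement is a known fact for it.

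For item (1), I would first enlarge $(\mathcal{Y},\mathcal{L}_{\mathcal{Y}})$ to a test configuration dominating both $(\mathcal{X},\mathcal{L})$ and $X_{\mathbb{A}^1}$; this is harmless since all three functionals are known to be pullback-invariant in the logarithmic setting (\cite[Proposition 2.5]{Fjt}, \cite[\S7]{BHJ}) and the proof of Lemma \ref{dtw2} already records the pullback invariance of the log-twisted Ding invariant for general $T_1$. Then the equalities for $M^{\mathrm{NA}}$ and $\mathrm{DF}$ follow from Lemma \ref{dtw} together with \cite[\S7.5]{BHJ}, and the equality for $\mathrm{Ding}$ from Lemma \ref{dtw2} together with \cite[Proposition 2.5 (1)]{Fjt} applied to $(X,B+T_1)$. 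For item (2), I would again pass to $(X,B+T_1)$ with a general $T_1$ whose strict transform on $\mathcal{X}$ and hence on the normalized base change $\mathcal{X}^{(d)}$ meets no component of the central fibers; then $\mathrm{Ding}_{(B,T)}(\mathcal{X}^{(d)},\mathcal{L}^{(d)})=\mathrm{Ding}_{B+T_1}(\mathcal{X}^{(d)},\mathcal{L}^{(d)})=d\,\mathrm{Ding}_{B+T_1}(\mathcal{X},\mathcal{L})=d\,\mathrm{Ding}_{(B,T)}(\mathcal{X},\mathcal{L})$ by \cite[Proposition 2.5 (3)]{Fjt} (the scaling behavior of the log Ding invariant); one only needs to check that the base change commutes with the choice of general twist, which follows since taking strict transforms commutes with the finite map $\mathbb{A}^1\to\mathbb{A}^1$ away from $0$ and the new central fiber components are the only new divisors.

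For item (3), after replacing by $(X,B+T_1)$, the inequality $\mathrm{Ding}_{B+T_1}(\mathcal{X},\mathcal{L})\le\mathrm{DF}_{B+T_1}(\mathcal{X},\mathcal{L})$ and the equality criterion are exactly \cite[Proposition 2.5 (2)]{Fjt} for the subklt log Fano pair $(X,B+T_1,L)$: the difference $\mathrm{DF}-\mathrm{Ding}$ is a non-negative quantity measuring the failure of $(\mathcal{X},\rho^{-1}_*T_{1,\mathbb{A}^1}+\mathcal{B}+\mathcal{X}_0)$ to be sublc, vanishing precisely when that pair is sublc. It then remains to note that this sublc condition is independent of the general choice of $T_1$ — which is precisely the content established in the proof of Lemma \ref{dtw2} via a $\mathbb{G}_m$-equivariant log resolution and Bertini — so the criterion phrased in terms of $T$ is well-posed. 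The main obstacle I anticipate is purely bookkeeping: ensuring that a single general $T_1$ can be chosen to be simultaneously generic with respect to all the finitely many relevant valuations, the test configurations $\mathcal{X}$, $\mathcal{Y}$, their base changes $\mathcal{X}^{(d)}$, and a common log resolution, so that every invocation of Lemma \ref{dtw} and Lemma \ref{dtw2} is legitimate at once; this is routine but must be spelled out carefully, and I would handle it by first fixing the resolution and then choosing $T_1$ in the complement of the (finitely many) bad loci.
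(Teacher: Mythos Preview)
Your proposal is correct and follows essentially the same route as the paper: reduce to the logarithmic case via a general $T_1\sim_{\mathbb{Q}}T$ using Lemmas \ref{dtw} and \ref{dtw2}, then invoke the corresponding statements in \cite{Fjt} (and \cite{Fjt2} for the equality case in (3)). The paper's own proof is a two-line sketch saying exactly this, so your more detailed bookkeeping about choosing a single $T_1$ general for all the finitely many relevant valuations and resolutions is a welcome expansion rather than a departure.
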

\begin{proof}
 It is easy to check that the proofs of \cite[Proposition 2.5 (3)]{Fjt} and \cite[Theorem 3.2]{Fjt2} also work for subpairs (cf.~\cite[\S7.8]{BHJ}).
 Thus the assertions follow from Lemma \ref{dtw2}.
 \end{proof}

We see that the (uniform) Ding-(semi)stability implies the (uniform) K-(semi)stability of log-twisted Fano pairs by Proposition \ref{fjtprop}. 
The main purpose in this subsection is to show the following theorem.
\begin{thm}\label{bhjz}
Let $(X,\Delta,L)$ be a polarized klt pair and $T=-(K_X+\Delta+L)$. Then 
\[
\delta_{(X,\Delta)}(L)\ge1
\]
if and only if the log twisted Fano pair $(X,\Delta,T,L)$ is Ding-semistable. Moreover,
\[
H_{\Delta}^\mathrm{NA}(\mathcal{X},\mathcal{L})\ge\delta_{(X,\Delta)}(L)(I^\mathrm{NA}(\mathcal{X},\mathcal{L})-J^\mathrm{NA}(\mathcal{X},\mathcal{L}))
\]
for any semiample test configuration $(\mathcal{X},\mathcal{L})$ for $(X,L)$.
\end{thm}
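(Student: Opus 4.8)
The plan is to prove the two assertions in turn, obtaining the equivalence $\delta_{(X,\Delta)}(L)\ge1\Leftrightarrow(X,\Delta,T,L)$ Ding-semistable from a valuative criterion plus the valuative description of the delta invariant, and then deducing the inequality from the equivalence by rescaling $L$. In both parts the opening move is the same: by Lemma \ref{dtw2} and Definition \ref{detw} one may work with a general $T_1\sim_{\mathbb{Q}}T$, so that log-twisted Ding-semistability of $(X,\Delta,T,L)$ is equivalent to $\mathrm{Ding}_{(\Delta+T_1)}(\mathcal{X},\mathcal{L})\ge0$ for every normal semiample test configuration, the pairs $(X,\Delta+T_1)$ being subklt with $L\equiv-(K_X+\Delta+T_1)$; thus the problem becomes a (sub)log Fano problem.

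For the equivalence, I would first prove, by adapting Fujita's valuative criterion \cite{Fjt} (and the reduction to special test configurations of \cite{LX}) to the subklt pairs $(X,\Delta+T_1)$, that $\mathrm{Ding}_{(\Delta+T_1)}(\mathcal{X},\mathcal{L})\ge0$ for all normal semiample test configurations if and only if $\beta_{(X,\Delta,T)}(F)=\beta_{(X,\Delta+T_1)}(F)\ge0$ for every prime divisor $F$ over $X$. For the direction ``$\beta\ge0\Rightarrow\mathrm{Ding}\ge0$'' one normalizes the test configuration, passes to a model dominating $X_{\mathbb{A}^1}$, and runs Fujita's lower estimate of the Ding invariant over the Okounkov body of the filtration induced on $R(X,L)$ by $\mathcal{X}_0$; since that estimate only sees finitely many divisorial valuations for a given test configuration and the output is independent of the general choice of $T_1$ by Lemma \ref{dtw2}, one is free to choose $T_1$ whose support avoids their centers. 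For the converse one plugs each divisorial valuation $\mathrm{ord}_F$ into its associated Rees (deformation-to-the-normal-cone type) test configuration, for which $\mathrm{Ding}_{(\Delta+T_1)}$ computes $\beta_{(X,\Delta+T_1)}(F)$ up to the positive factor $\mathrm{vol}(L)^{-1}$, with $T_1$ chosen general avoiding the center of $F$. Finally, from Definition \ref{detw} one has $\beta_{(X,\Delta,T)}(F)=\mathrm{vol}(L)\bigl(A_{(X,\Delta)}(F)-S_L(F)\bigr)$, so ``$\beta_{(X,\Delta,T)}(F)\ge0$ for all $F$'' is the same as ``$A_{(X,\Delta)}(v)\ge S_L(v)$ for all divisorial $v$'', which by the valuative formula of \cite{BlJ} (Definition \ref{defdelta}) is precisely $\delta_{(X,\Delta)}(L)\ge1$.

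For the inequality it is enough to show that $\delta_{(X,\Delta)}(L)\ge\lambda$ implies $H^{\mathrm{NA}}_\Delta(\mathcal{X},\mathcal{L})\ge\lambda\bigl(I^{\mathrm{NA}}(\mathcal{X},\mathcal{L})-J^{\mathrm{NA}}(\mathcal{X},\mathcal{L})\bigr)$ for every normal semiample test configuration, and then take the supremum over rational $\lambda<\delta_{(X,\Delta)}(L)$. Fix such a $\lambda$. Since $S_{\lambda L}(v)=\lambda S_L(v)$ for all $v$, one has $\delta_{(X,\Delta)}(\lambda L)=\lambda^{-1}\delta_{(X,\Delta)}(L)\ge1$, so by the equivalence already established the log-twisted Fano pair $(X,\Delta,T_\lambda,\lambda L)$ with $T_\lambda=-(K_X+\Delta+\lambda L)$ is Ding-semistable. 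Hence, for any normal semiample test configuration $(\mathcal{X},\mathcal{L})$ for $(X,L)$, we get $\mathrm{Ding}_{(\Delta,T_\lambda)}(\mathcal{X},\lambda\mathcal{L})\ge0$, and then, by the inequality $\mathrm{Ding}^{\mathrm{NA}}_{(\Delta,T_\lambda)}\le M^{\mathrm{NA}}_{(\Delta,T_\lambda)}$ (Berman's Fano inequality, extended to log-twisted Fano pairs by reduction to the (sub)log case via Lemmas \ref{dtw}, \ref{dtw2}), also $M^{\mathrm{NA}}_{(\Delta,T_\lambda)}(\mathcal{X},\lambda\mathcal{L})\ge0$. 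A short computation from Definitions \ref{nadef} and \ref{deflogtwisted}, using linearity of $(\mathcal{J}^{\bullet})^{\mathrm{NA}}$ in the line bundle, scale-invariance of $H^{\mathrm{NA}}_\Delta$, the identity $K_X+\Delta+T_\lambda\equiv-\lambda L$, and $(\mathcal{J}^L)^{\mathrm{NA}}(\mathcal{X},\lambda\mathcal{L})=I^{\mathrm{NA}}(\mathcal{X},\mathcal{L})-J^{\mathrm{NA}}(\mathcal{X},\mathcal{L})$, yields $M^{\mathrm{NA}}_{(\Delta,T_\lambda)}(\mathcal{X},\lambda\mathcal{L})=H^{\mathrm{NA}}_\Delta(\mathcal{X},\mathcal{L})-\lambda\bigl(I^{\mathrm{NA}}(\mathcal{X},\mathcal{L})-J^{\mathrm{NA}}(\mathcal{X},\mathcal{L})\bigr)$, which is the desired inequality; this is the klt-pair analogue of \cite[Corollary 2.11]{BJ2}.

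The main obstacle I foresee is the first step: making Fujita's and Li--Xu's proof of the valuative criterion go through for the subklt pairs $(X,\Delta+T_1)$, whose boundaries need not be effective. One has to check that the Okounkov-body estimate of the Ding invariant, the MMP steps producing a special test configuration, and the inversion-of-adjunction and ``dreamy divisor'' inputs all survive replacing an effective boundary by a general difference $T_1=A_1-A_2$ of semiample divisors, uniformly in the test configuration; the semiample-$T$ case treated in \S\S A.3 (en route to Theorem \ref{appendix}) is a helpful warm-up but does not cover this generality.
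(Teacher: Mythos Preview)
Your treatment of the second assertion is exactly the paper's: rescale $L$ to $rL$ so that $\delta\ge1$, apply the first assertion to get Ding-semistability of $(X,\Delta,T_r,rL)$, then use Proposition~\ref{fjtprop}(3) to pass from $\mathrm{Ding}$ to $M^{\mathrm{NA}}_{(\Delta,T_r)}=H^{\mathrm{NA}}_\Delta-(I^{\mathrm{NA}}-J^{\mathrm{NA}})$ and conclude.

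For the first assertion your overall strategy matches, but the route you propose for ``$\delta\ge1\Rightarrow$ Ding-semistable'' differs from the paper precisely at the point you flag as the main obstacle. You plan to reduce to the sublog pair $(X,\Delta+T_1)$ and then adapt the Li--Xu MMP reduction to special test configurations; as you rightly worry, with $T$ not semiample the boundary $\Delta+T_1$ is only subklt and the MMP steps are not available. The paper sidesteps this entirely (see the Remark immediately after the statement: the proof is carried out \emph{without} \cite[Theorem~1]{LX}). Instead, for an arbitrary normal semiample test configuration one first passes, via Proposition~\ref{lxlcmod}, to a dominating model with $(\mathcal{X},\Delta_{\mathcal{X}}+\mathcal{X}_{0,\mathrm{red}})$ lc (this only uses the klt pair $(X,\Delta)$, no twist), and then applies the explicit formula of Proposition~\ref{bhjmod},
\[
\mathrm{Ding}_{(\Delta,T)}(\mathcal{X},\mathcal{L})=\min_E\bigl(A_{(X,\Delta)}(v_E)+b_E^{-1}\mathrm{ord}_E(D)\bigr)-E^{\mathrm{NA}}(\mathcal{X},\mathcal{L}),
\]
where $D=\mathcal{L}-L_{\mathbb{P}^1}$. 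The point is that this formula is expressed purely in terms of $A_{(X,\Delta)}$; the twist $T$ has disappeared. One then compares the right-hand side directly with $S_L(v_E)$ using the BHJ filtration description $E^{\mathrm{NA}}=-\int\lambda\,d(\mathrm{vol}\,R^{(\lambda)})$ \cite[Theorem~5.16, Lemma~7.3]{BHJ} and the inclusion $F^{x+mb_E^{-1}\mathrm{ord}_E(D)}H^0(X,mL)\subset\{s:v_E(s)\ge x\}$, obtaining $\mathrm{Ding}_{(\Delta,T)}\ge0$ from $A_{(X,\Delta)}(v_E)\ge S_L(v_E)$.

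In short: your ``Fujita lower estimate via the filtration'' is the right idea, but it should be run directly against $A_{(X,\Delta)}$ using Propositions~\ref{lxlcmod} and~\ref{bhjmod}, not via the sublog detour; then no Li--Xu MMP and no subklt adaptation is needed. The converse implication is Theorem~\ref{6.12}, whose proof (a reduction to \cite[Theorem~4.1]{Fjt} by choosing $T_1$ general for the finitely many centers arising from the flag ideals $\mathscr{I}_r$) also avoids the MMP issue and is close in spirit to your Rees-type argument.
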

  
  To show this, we first deduce that (uniform) Ding-stability implies (uniform) positivity of $\beta$-invariant.
  
  \begin{thm}[cf., {\cite[Theorem 6.12]{Fjt}}]\label{6.12}
Let $(X,B,T,L=-K_X-B-T)$ be an $n$-dimensional log-twisted Fano pair and $\delta\in[0,1)$. If $\mathrm{Ding}_{(B,T)}(\mathcal{X},\mathcal{L})\ge \delta \,J^{\mathrm{NA}}(\mathcal{X},\mathcal{L})$ for any normal semiample test configuration $(\mathcal{X},\mathcal{L})$, then for any prime divisor $F$ over $X$, $\beta_{(X,B,T)}(F)\ge \delta\,j_{(X,B,T)}(F)$. 

In particular, if $(X,B,T,L)$ is Ding-semistable, then $\delta_{(X,B)}(L)\ge1$.
\end{thm}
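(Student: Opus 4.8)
The plan is to fix a prime divisor $F$ over $X$, attach to it an explicit sequence of normal semiample test configurations, feed these into the hypothesis, and let the parameter go to infinity; the last assertion then follows by specializing to $\delta=0$ and invoking the valuative formula for $\delta_{(X,B)}(L)$.

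First I would choose a general $T_1\sim_{\mathbb{Q}}T$ (in the sense of Definition \ref{detw}) whose support avoids the center $c_X(F)$ of $F$ on $X$. Then $(X,B+T_1)$ is subklt and agrees with the klt pair $(X,B)$ near $c_X(F)$; in particular $A_{(X,B+T_1)}(F)=A_{(X,B)}(F)$, and by Lemmas \ref{dtw} and \ref{dtw2} we may systematically replace $\mathrm{Ding}_{(B,T)}$, $M^{\mathrm{NA}}_{(B,T)}$ and $\beta_{(X,B,T)}$ by $\mathrm{Ding}_{B+T_1}$, $M^{\mathrm{NA}}_{B+T_1}$ and $\beta_{(X,B+T_1)}$, the invariants being independent of the general choice of $T_1$. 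Next, for each sufficiently divisible $m>0$, let $\mathcal{X}_m$ be the normalization of the blowup of $X_{\mathbb{A}^1}$ along the flag ideal $\mathfrak{a}_{m,\bullet}=\sum_{j\ge0}\mathfrak{a}_{m,j}t^{m-j}+(t^m)$, where $\mathfrak{a}_{m,j}\subset\mathcal{O}_X$ is the base ideal of $\{s\in H^0(X,mL):\mathrm{ord}_F(s)\ge j\}$, equipped with the canonical semiample $\mathbb{Q}$-line bundle $\mathcal{L}_m$ restricting to $L$ on the general fibre. These are exactly the test configurations used by Fujita: they dominate $X_{\mathbb{A}^1}$, are normal and semiample, and $\mathcal{L}_m\sim_{\mathbb{A}^1,\mathbb{Q}}-K_{(\mathcal{X}_m,\mathcal{B}_m+(\rho_m)^{-1}_*T_{1,\mathbb{A}^1})/\mathbb{A}^1}-D_m$ with $D_m$ supported on $\mathcal{X}_{m,0}$, so $\mathrm{Ding}_{B+T_1}(\mathcal{X}_m,\mathcal{L}_m)$ is defined.

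The heart of the proof is the asymptotic evaluation of the two functionals as $m\to\infty$. Using the Okounkov-body and asymptotic Riemann--Roch estimates for filtered linear series (\cite[\S\S6--7]{BHJ}, \cite{Fjt}), together with the fact that near $c_X(F)$ the pair $(\mathcal{X}_m,\mathcal{B}_m+D_m+(\rho_m)^{-1}_*T_{1,\mathbb{A}^1})$ sees nothing beyond $(X,B)$ and $F$, one obtains that the Ding-energy term $-\mathcal{L}_m^{n+1}/((n+1)L^n)$ converges to $-\mathrm{vol}(L)^{-1}\int_0^{\tau_L(F)}\mathrm{vol}(L-xF)\,dx$, while $\mathrm{lct}_{(\mathcal{X}_m,\,\mathcal{B}_m+D_m+(\rho_m)^{-1}_*T_{1,\mathbb{A}^1})}(\mathcal{X}_{m,0})-1$ converges to $A_{(X,B)}(F)$; hence
\[
\lim_{m\to\infty}\mathrm{Ding}_{B+T_1}(\mathcal{X}_m,\mathcal{L}_m)=\frac{\beta_{(X,B,T)}(F)}{\mathrm{vol}(L)},\qquad
\lim_{m\to\infty}J^{\mathrm{NA}}(\mathcal{X}_m,\mathcal{L}_m)=\frac{j_{(X,B,T)}(F)}{\mathrm{vol}(L)}.
\]
Plugging $(\mathcal{X}_m,\mathcal{L}_m)$ into the hypothesis $\mathrm{Ding}_{B+T_1}(\mathcal{X}_m,\mathcal{L}_m)\ge\delta\,J^{\mathrm{NA}}(\mathcal{X}_m,\mathcal{L}_m)$ and letting $m\to\infty$ yields $\beta_{(X,B,T)}(F)\ge\delta\,j_{(X,B,T)}(F)$, which is the main claim.

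For the final statement, take $\delta=0$: Ding-semistability gives $\beta_{(X,B,T)}(F)\ge0$ for every prime divisor $F$ over $X$. Since $\mathrm{vol}(L-xF)$ vanishes for $x>\tau_L(F)$, we have $\beta_{(X,B,T)}(F)=\bigl(A_{(X,B)}(F)-S_L(F)\bigr)\mathrm{vol}(L)$ with $S_L(F)=\mathrm{vol}(L)^{-1}\int_0^\infty\mathrm{vol}(L-xF)\,dx$, so $\beta_{(X,B,T)}(F)\ge0$ for all $F$ is equivalent to $A_{(X,B)}(F)/S_L(F)\ge1$ for all $F$, i.e. to $\delta_{(X,B)}(L)\ge1$ by the valuative description of the $\delta$-invariant (Definition \ref{defdelta}). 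The main obstacle is the limiting computation in the third paragraph: one must check that the $\mathcal{X}_m$ are normal semiample test configurations with correction divisors $D_m$ behaving as in Definition \ref{detw}, and — more delicately — that $\mathrm{lct}_{(\mathcal{X}_m,\ldots)}(\mathcal{X}_{m,0})$ has the asserted asymptotics uniformly in $m$; this forces the generic choice of $T_1$ (so that its transform on $\mathcal{X}_m$ is disjoint from the lc centers lying over $c_X(F)$) and uses the convergence theory of filtered linear series, reworking the arguments of \cite{Fjt} in the present log-twisted, sub-lc setting.
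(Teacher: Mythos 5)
Your proposal is correct and follows essentially the same route as the paper, which reduces to Fujita's Theorem 4.1/6.12 by attaching to $F$ the flag-ideal test configurations built from the filtration by $\mathrm{ord}_F$, choosing $T_1\sim_{\mathbb{Q}}T$ general so that the log-twisted invariants agree with the logarithmic ones, applying the hypothesis to each $(\mathcal{X}_m,\mathcal{L}_m)$, and letting $m\to\infty$. The one imprecision is your asserted equality $\lim_m\mathrm{Ding}(\mathcal{X}_m,\mathcal{L}_m)=\beta_{(X,B,T)}(F)/\mathrm{vol}(L)$: the containment $\mathscr{I}_r\cdot\mathcal{O}_{Y\times\mathbb{A}^1}\subset(\mathcal{O}_{Y\times\mathbb{A}^1}(-F)+t)^{re_+}$ used at this step only bounds the lct term from above, hence only gives $\limsup_m\mathrm{Ding}(\mathcal{X}_m,\mathcal{L}_m)\le\beta_{(X,B,T)}(F)/\mathrm{vol}(L)$, but that is precisely the direction your argument needs, so nothing breaks.
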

 
\begin{proof}
This theorem essentially follows from {\cite[Theorem 6.12]{Fjt}}. 
As in the proof of \cite[Theorem 4.1]{Fjt}, take a log resolution $\sigma:Y\to X$ such that $F$ is a smooth divisor on $Y$ and $r_0\in\mathbb{Z}_{>0}$ such that $r_0(K_X+B+T)$ is $\mathbb{Z}$-Cartier. The argument of the proof of [{\it loc.cit}, 4.1] shows the following.
Let $I_{(k,x)}$ be the image of
$$H^0(Y,-kr_0\sigma^*(K_X+B+T)-\lceil xF\rceil)\otimes\mathcal{O}_X(kr_0\sigma^*(K_X+B+T))\to\mathcal{O}_X$$
for any $k\in\mathbb{Z}_{>0}$ and $\mathscr{I}_r:=\sum_{j=0}^{r(e_+-e_-)} I_{(r,re_+-j)}t^j$ for sufficiently large $r\ge r_0$.
Here, we take $e_-\in\mathbb{Z}_{<0}$ and $e_+\in\mathbb{Z}$ such that $e_+>rT_L(F)$.
We set $\rho_r\colon\mathcal{X}_r\to X_{\mathbb{P}^1}$ as the blow up along $\mathscr{I}_r$, $\mathcal{L}_r:=\rho_r^*L_{\mathbb{P}^1}-\rho_r^{-1}\mathscr{I}_r^{\frac{1}{rr_0}}$ as a semiample $\mathbb{Q}$-line bundle and
\[
d_{r,\delta}:=1+E^{\mathrm{NA}}(\mathcal{X}_r,\mathcal{L}_r)+\delta J^{\mathrm{NA}}(\mathcal{X}_r,\mathcal{L}_r).
\]
Then, we have that the pair
\begin{align*}
\left(X\times\mathbb{P}^1,\mathscr{I}_r^{\frac{1}{rr_0}}\cdot\mathcal{O}(B_{\mathbb{P}^1}+T_{r,\mathbb{P}^1})\cdot(t^{d_{r,\delta}})\right)
\end{align*}
is sublc (cf.~\cite[Definition 2.4]{Fjt2}), where $T_{r}\sim_{\mathbb{Q}}T$ is general.
Thanks to \cite[Claim 3]{Fjt}, 
\begin{equation}\label{eq-thanks-fujita}
d_{\infty,\delta}:=\lim_{r\to\infty}d_{r,\delta}=1-\frac{e_+}{r_0}+\delta T_L(F)+\frac{1-\delta}{L^n}\int^{T_L(F)}_{0}\mathrm{vol}(L-xF)dx.
\end{equation}
On the other hand, the pair
\[
\left(Y\times\mathbb{P}^1,\mathcal{O}_{Y\times\mathbb{P}^1}(-\sum_{i=0}^k(1-A_{(X,B)}(F_i))(F_i\times\mathbb{P}^1)+\sigma^{-1}_*B_{\mathbb{P}^1}+\sigma^*T_{r,\mathbb{P}^1})\cdot ( \sigma^{-1}\mathscr{I}_r)^{\frac{1}{rr_0}}\cdot (t)^{d_{r,\delta}}\right)
\]
is sublc
, where $F_0=F$ and $\{F_i\}$ is the set of $\sigma$-exceptional prime divisors. 
Here, by abuse of notations, we denote $\sigma\times\mathrm{id}_{\mathbb{P}^1}$ by $\sigma$. We can choose $T_r$ so general that $A_{(X,B)}(F_i)=A_{(X,B+T_r)}(F_i)$ for any $i$, $\sigma$ is also a log resolution of $(X,B+T_r)$ and $\sigma^{-1}_*T_r=\sigma^*T_r$.
On the other hand, we have
\[
\mathscr{I}_r\cdot \mathcal{O}_{Y\times\mathbb{P}^1}\subset (\mathcal{O}_{Y\times\mathbb{P}^1}(-F)+t)^{re_+}.
\]
Let $\mathcal{D}:=-\sum_{i=0}^k(1-A_{(X,B)}(F_i))(F_i\times\mathbb{P}^1)+\sigma^{-1}_*B_{\mathbb{P}^1}$.
Therefore, the pair
\[
\left(Y\times\mathbb{P}^1, ( \mathcal{O}_{Y\times\mathbb{P}^1}(-F)+t)^{\frac{e_+}{r_0}}\cdot\mathcal{O}_{Y\times\mathbb{P}^1}(\mathcal{D}+\sigma^*T_{r,\mathbb{P}^1})\cdot (t)^{d_{r,\delta}}\right)
\]
is sublc. By \cite[Corollary 2.33]{KoMo}, we can choose $T'\sim_{\mathbb{Q}}T$ so general that for any $r\in\mathbb{Z}_{>0}$
\[
\left(Y\times\mathbb{P}^1, ( \mathcal{O}_{Y\times\mathbb{P}^1}(-F)+t)^{\frac{e_+}{r_0}}\cdot\mathcal{O}_{Y\times\mathbb{P}^1}(\mathcal{D}+\sigma^*T'_{\mathbb{P}^1})\cdot (t)^{d_{r,\delta}}\right)
\]
is also sublc, $\sigma$ is also a log resolution of $(X,B+T')$, $\sigma^*T'_{\mathbb{P}^1}=\sigma^{-1}_*T'_{\mathbb{P}^1}$ and $A_{(X,B+T')}(F_i)=A_{(X,B)}(F_i)$. Hence,
\begin{equation}\label{eq-subpair-too-long}
\left(Y\times\mathbb{P}^1, ( \mathcal{O}_{Y\times\mathbb{P}^1}(-F)+t)^{\frac{e_+}{r_0}}\cdot\mathcal{O}_{Y\times\mathbb{P}^1}(\mathcal{D}+\sigma^*T'_{\mathbb{P}^1})\cdot (t)^{d_{\infty,\delta}}\right)
\end{equation}
is also sublc.
Let $E_F$ be the exceptional divisor of the blow up $\mathcal{Y}\to Y_{\mathbb{P}^1}$ along $F\times\{0\}$. 
Similarly to the proof of \cite[Theorem 4.1]{Fjt}, we have that 
$A_{(X,B+T')}(F)-\frac{e_+}{r_0}-d_{\infty,\delta}+1\ge0$ by calculating the log discrepancy of the pair (\ref{eq-subpair-too-long}) with respect to $E_F$.
Then the assertion follows from (\ref{eq-thanks-fujita}).
\end{proof}

To prove the converse, recall the following explicit formula of the log Ding-invariant by \cite{BHJ}.

 \begin{prop}[{\cite[Proposition 7.29]{BHJ}}]\label{bhjmod}
Let $(X,B,L)$ be a polarized klt pair and $T$ be a $\mathbb{Q}$-line bundle on $X$ such that $T=-(K_X+B+L)$.
Let $(\mathcal{X},\mathcal{L})$ be a semiample test configuration for $(X,L)$ dominating $X_{\mathbb{P}^1} $ such that $(\mathcal{X},\mathcal{B}+\rho^*T_{1,\mathbb{P}^1}+\mathcal{X}_{0,\mathrm{red}})$ is sublc for general $T_1\sim_{\mathbb{Q}}T$, where $\rho:\mathcal{X}\to X_{\mathbb{P}^1}$ is the canonical morphism and $\mathcal{B}=\rho^{-1}_*B_{\mathbb{P}^1}$. Let $\{E_i\}_{i=1}^r$ be the set of irreducible components of $\mathcal{X}_0$ and $G$ be a $\mathbb{Q}$-divisor defined by $\mathrm{Supp}\, G\subset \mathcal{X}_0$ and $G\sim_{\mathbb{Q}}\mathcal{L}-L_{\mathbb{P}^1}$.
Then 
\[
\mathrm{Ding}_{(B,T)}(\mathcal{X},\mathcal{L})=\mathrm{Ding}_{B+T_1}(\mathcal{X},\mathcal{L})=\min_{E_i}(A_{(X,B)}(v_{E_i})+b_{E_i}^{-1}\mathrm{ord}_{E_i}(G)-E^{\mathrm{NA}}(\mathcal{X},\mathcal{L}).
\]
\end{prop}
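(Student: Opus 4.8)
The plan is to reduce the assertion to the untwisted logarithmic Ding formula, that is, to \cite[Proposition 7.29]{BHJ} for the log pair $(X,B)$, by exploiting genericity of the twist $T_1$. First, since $-\frac{\mathcal{L}^{n+1}}{(n+1)L^n}=-E^{\mathrm{NA}}(\mathcal{X},\mathcal{L})$ by Definition \ref{nadef}, and the first equality $\mathrm{Ding}_{(B,T)}(\mathcal{X},\mathcal{L})=\mathrm{Ding}_{B+T_1}(\mathcal{X},\mathcal{L})$ is Lemma \ref{dtw2}, it suffices by Definition \ref{detw} to show
\[
\mathrm{lct}_{(\mathcal{X},\mathcal{B}+D+\rho^{-1}_*T_{1,\mathbb{A}^1})}(\mathcal{X}_0)-1=\min_E\bigl(A_{(X,B)}(v_E)+b_E^{-1}\mathrm{ord}_E(\mathcal{L}-L_{\mathbb{A}^1})\bigr),
\]
where $D$ is the $\mathbb{Q}$-divisor supported on $\mathcal{X}_0$ from Definition \ref{detw} and $E$ ranges over the irreducible components of $\mathcal{X}_0$.

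Next I would arrange the genericity of $T_1$. Writing $T=A_1-A_2$ with $A_1,A_2$ semiample, a general $T_1\sim_{\mathbb{Q}}T$ is defined by members of base-point-free systems that contain no fixed subvariety; hence $\rho^{-1}_*T_{1,\mathbb{A}^1}=\rho^{*}T_{1,\mathbb{A}^1}$ and no component of $\mathcal{X}_0$ lies in $\mathrm{Supp}(\rho^{*}T_{1,\mathbb{A}^1})$, so $\mathrm{ord}_E(\rho^{*}T_{1,\mathbb{A}^1})=0$ for every component $E$ of $\mathcal{X}_0$; the supplied hypothesis that $(\mathcal{X},\mathcal{B}+\rho^{*}T_{1,\mathbb{A}^1}+\mathcal{X}_{0,\mathrm{red}})$ is sublc then applies, $D$ does not depend on the choice of $T_1$ (as recorded in Definition \ref{detw}), and $v_E(T_1)=0$, whence $A_{(X,B+T_1)}(v_E)=A_{(X,B)}(v_E)$. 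Here the only real care is that ``general'' must work simultaneously for the finitely many components $E$, the finitely many valuations $v_E$, and the sublc requirement; this is automatic, since these are finitely many non-empty Zariski-open conditions on $T_1$.

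Then I would run the Baker--Huang--Jonsson computation verbatim on $(X,B)$. Since $(\mathcal{X},\mathcal{B}+\rho^{*}T_{1,\mathbb{A}^1}+\mathcal{X}_{0,\mathrm{red}})$ is sublc and $\mathrm{Supp}\,D\subseteq\mathcal{X}_0$, the threshold $\mathrm{lct}_{(\mathcal{X},\mathcal{B}+D+\rho^{-1}_*T_{1,\mathbb{A}^1})}(\mathcal{X}_0)$ is computed over the components $E$ of $\mathcal{X}_0$ alone --- no divisor over $\mathcal{X}$ does strictly better, exactly as in the proof of \cite[Proposition 7.29]{BHJ} --- and therefore equals $\min_E A_{(\mathcal{X},\mathcal{B}+D+\rho^{*}T_{1,\mathbb{A}^1})}(E)/b_E$. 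For such an $E$, which is vertical and not contained in $\mathrm{Supp}\,\mathcal{B}\cup\mathrm{Supp}\,\rho^{*}T_{1,\mathbb{A}^1}$, one has $A_{(\mathcal{X},\mathcal{B}+D+\rho^{*}T_{1,\mathbb{A}^1})}(E)=1-\mathrm{ord}_E(D)$; unwinding $D$ from $K_{\mathcal{X}/\mathbb{A}^1}+\mathcal{B}+\rho^{*}T_{1,\mathbb{A}^1}+\mathcal{L}+D\sim_{\mathbb{Q}}0$ (using $K_X+B+T+L\equiv0$) together with the standard relative-log-discrepancy identity of \cite[\S4,\S7]{BHJ}, which expresses $\mathrm{ord}_E\bigl(K_{\mathcal{X}/\mathbb{A}^1}+\mathcal{B}-\rho^{*}(K_{X_{\mathbb{A}^1}/\mathbb{A}^1}+B_{\mathbb{A}^1})\bigr)$ in terms of $b_E$ and $A_{(X,B)}(v_E)$, one obtains
\[
\frac{A_{(\mathcal{X},\mathcal{B}+D+\rho^{*}T_{1,\mathbb{A}^1})}(E)}{b_E}=1+A_{(X,B)}(v_E)+b_E^{-1}\mathrm{ord}_E(\mathcal{L}-L_{\mathbb{A}^1}).
\]
Taking the minimum over $E$, subtracting $1$, and recalling $-\frac{\mathcal{L}^{n+1}}{(n+1)L^n}=-E^{\mathrm{NA}}(\mathcal{X},\mathcal{L})$ yields the claimed formula.

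I do not expect a deep obstacle, since the statement is engineered to reduce to \cite[Proposition 7.29]{BHJ}; the step requiring genuine attention is the bookkeeping of the second paragraph --- choosing $T_1$ general enough that the strict and total transforms agree, the supplied sublc hypothesis remains usable, and $v_E(T_1)=0$ for all the finitely many relevant $E$ --- so that the twisted Ding invariant is literally the untwisted one and the BHJ threshold and discrepancy lemmas transfer without change. A secondary, purely formal point is maintaining consistency between the $\mathbb{P}^1$- and $\mathbb{A}^1$-level compactifications in the definition of $D$ and in the difference $\mathcal{L}-L_{\mathbb{A}^1}$.
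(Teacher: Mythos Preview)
Your proposal is correct and follows essentially the same route as the paper: invoke Lemma \ref{dtw2} for the first equality, choose $T_1$ general so that $\rho^{-1}_*T_{1,\mathbb{A}^1}=\rho^*T_{1,\mathbb{A}^1}$ avoids every component of $\mathcal{X}_0$, and then use the sublc hypothesis to see that the threshold is computed by the components $E$ of $\mathcal{X}_0$, after which the discrepancy bookkeeping of \cite[\S7]{BHJ} gives the formula (the paper phrases this via the auxiliary functional $L^{\mathrm{NA}}_{(B+T_1)}$ and \cite[Lemma 7.30]{BHJ}, but the content is the same). One cosmetic correction: the reference \cite{BHJ} is Boucksom--Hisamoto--Jonsson, not ``Baker--Huang--Jonsson''.
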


\begin{proof}
The first equality follows from Lemma \ref{dtw2}. 
Let $D=-\mathcal{L}-K_{\mathcal{X}/\mathbb{P}^1}-\mathcal{B}-\rho^*T_{1,\mathbb{P}^1}$ and $D^{\mathrm{log}}=D-\mathcal{X}_{0,\mathrm{red}}+\mathcal{X}_0$. 
We see by \cite[Proposition 7.29]{BHJ} that 
\begin{align*}
\mathrm{lct}_{(\mathcal{X},\mathcal{B}+\rho^*T_{1,\mathbb{P}^1}+\mathcal{X}_{0,\mathrm{red}}+D^{\mathrm{log}})}(\mathcal{X}_0)=\min_{E_i}(A_{(X,B+T_1)}(v_{E_i})+b_{E_i}^{-1}\mathrm{ord}_{E_i}(G))
\end{align*}
for any $T_1\sim_{\mathbb{Q}}T$ such that $(X,B+T_1)$ is subklt and $(\mathcal{X},\mathcal{B}+\rho^*T_{1,\mathbb{P}^1}+\mathcal{X}_{0,\mathrm{red}})$ is sublc.
Thus, we obtain the assertion if we further choose $T_1$ so general that $\rho^{-1}_*T_{1,\mathbb{P}^1}=\rho^*T_{1,\mathbb{P}^1}$ and $A_{(X,B+T_1)}(v_{E_i})=A_{(X,B)}(v_{E_i})$ for any ${E_i}$.
 %
\end{proof}

 To show Theorem \ref{bhjz}, we remark that the following holds.
 
 \begin{prop}[{\cite[Proposition 2]{LX}}]\label{lxlcmod}
 Let $(X,B)$ be a projective klt pair and $\mathcal{X}$ be a normal test configuration for $X$. Then there exist $d\in \mathbb{Z}_{>0}$ and a normal test configuration $\mathcal{Y}$ for $X$ such that 
 \begin{enumerate}
 \item There exists a canonical morphism $h\colon\mathcal{Y}\to\mathcal{X}^{(d)}$, where $\mathcal{X}^{(d)}$ as Notation \ref{rramamra},
 \item $(\mathcal{Y},\mathcal{B}_{\mathcal{Y}}+\mathcal{Y}_0)$ is lc, where $\mathcal{B}_{\mathcal{Y}}$ is the strict transform of $B\times\mathbb{P}^1$,
 \item $K_\mathcal{Y}+\mathcal{B}_{\mathcal{Y}}+\mathcal{Y}_0$ is $h$-ample.
 \end{enumerate}
  \end{prop}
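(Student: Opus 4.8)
The plan is to follow the construction of Li--Xu \cite{LX} via semistable reduction combined with the existence of log canonical modifications. First I would observe that the only obstruction to $(\mathcal{X},\mathcal{B}+\mathcal{X}_0)$ being lc is concentrated over $0\in\mathbb{A}^1$: on $\mathcal{X}\setminus\mathcal{X}_0\cong X\times(\mathbb{A}^1\setminus0)$ the pair $(X,B)$ is klt by hypothesis, so away from the central fiber no lc center appears. Hence it suffices to modify $\mathcal{X}$ only over the origin, and every such modification can be performed $\mathbb{G}_m$-equivariantly since the $\mathbb{G}_m$-action restricts to the canonical one on $\mathbb{A}^1$ and the center of modification is $\mathbb{G}_m$-invariant.

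Next I would take a $\mathbb{G}_m$-equivariant log resolution $g:\mathcal{W}\to\mathcal{X}$ of $(\mathcal{X},\mathcal{B})$ such that $\mathrm{Exc}(g)+g^{-1}_*\mathcal{B}+\mathcal{W}_{0,\mathrm{red}}$ is snc (cf.\ \cite{Ko2}, \cite[Lemma 5]{LX}). Then, replacing $\mathbb{A}^1$ by its degree-$d$ cover $t\mapsto t^d$ for a sufficiently divisible $d$ and normalizing, the semistable reduction theorem \cite{KKMS} lets me assume in addition that the central fiber of the resulting model is reduced and that $\mathcal{W}_0+g^{-1}_*\mathcal{B}+\mathrm{Exc}(g)$ stays snc; by \cite[Lemma 5]{LX} this base change and the subsequent blow-ups can again be carried out $\mathbb{G}_m$-equivariantly, so that $\mathcal{W}$ is a normal test configuration dominating $\mathcal{X}^{(d)}$ with $(\mathcal{W},\mathcal{B}_\mathcal{W}+\mathcal{W}_0)$ log smooth, in particular lc.

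Finally I would run a $\mathbb{G}_m$-equivariant $(K_\mathcal{W}+\mathcal{B}_\mathcal{W}+\mathcal{W}_0)$-MMP over $\mathcal{X}^{(d)}$ with scaling of a $g$-ample divisor, exactly as in the construction of dlt and lc modifications (cf.\ \cite{OX}, \cite{HX}, and the argument of Proposition \ref{model}); since the output contracts precisely the $g$-exceptional divisors of negative log discrepancy with respect to $(\mathcal{X}^{(d)},\mathcal{B}+\mathcal{X}^{(d)}_0)$, it produces a normal test configuration $\mathcal{Y}$ with a projective birational morphism $\mathcal{Y}\to\mathcal{X}^{(d)}$ (so $\mathcal{Y}$ dominates $\mathcal{X}^{(d)}$) such that $(\mathcal{Y},\mathcal{B}_\mathcal{Y}+\mathcal{Y}_0)$ is lc. Equivariance of $\mathcal{Y}$ follows either from the uniqueness of the lc modification or by running the MMP $\mathbb{G}_m$-equivariantly as in \cite{LX}. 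The main obstacle I anticipate is guaranteeing the existence and termination of this relative MMP producing the lc modification, together with the bookkeeping needed to choose a single base-change degree $d$ that simultaneously handles semistable reduction and keeps every intermediate variety a $\mathbb{G}_m$-equivariant normal test configuration for $X$.
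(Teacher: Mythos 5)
Your argument is correct and coincides with the intended one: the paper leaves the proof to the reader, citing \cite[Proposition 2]{LX}, and your $\mathbb{G}_m$-equivariant log resolution followed by semistable reduction and the relative MMP producing the lc modification is exactly that construction. Note only that the log smooth semistable model $\mathcal{W}$ already satisfies both conditions (1) and (2) as stated, so the final MMP step is needed only if one additionally wants $K_{\mathcal{Y}}+\mathcal{B}_{\mathcal{Y}}+\mathcal{Y}_0$ to be ample over $\mathcal{X}^{(d)}$, as in the paper's subsequent applications.
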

  
We call such $\mathcal{Y}$ the lc modification of $(\mathcal{X}^{(d)},\mathcal{B}^{(d)})$.
The proof of this proposition is similar to \cite[Proposition 2]{LX} and is left to the reader. Then, we apply these propositions to prove Theorem \ref{bhjz}.
 
\begin{proof}[Proof of Theorem \ref{bhjz}]
The second assertion follows from the first assertion and Proposition \ref{fjtprop}. 
Indeed, we have that for any normal semiample test configuration $(\mathcal{X},\mathcal{L})$ for $(X,L)$,
\begin{align*}
M^{\mathrm{NA}}_{(\Delta,T)}(\mathcal{X},\mathcal{L})&=H^{\mathrm{NA}}_{\Delta}(\mathcal{X},\mathcal{L})-(I^\mathrm{NA}(\mathcal{X},\mathcal{L})-J^\mathrm{NA}(\mathcal{X},\mathcal{L})),\quad\text{and}\\
\mathrm{Ding}_{(\Delta,T)}(\mathcal{X},\mathcal{L})&\le M^{\mathrm{NA}}_{(\Delta,T)}(\mathcal{X},\mathcal{L})
\end{align*}
by Proposition \ref{fjtprop} (3).
If the first assertion holds, then we obtain that $\delta_{(X,\Delta)}(L)\ge 1$ implies
\[
H^{\mathrm{NA}}_{\Delta}(\mathcal{X},\mathcal{L})\ge I^\mathrm{NA}(\mathcal{X},\mathcal{L})-J^\mathrm{NA}(\mathcal{X},\mathcal{L}).
\]
Put $n=\mathrm{dim}\,X$ and we note that for any $r\in\mathbb{Q}_{>0}$, 
\begin{align*}
\delta_{(X,\Delta)}(rL)&=r^{-1}\delta_{(X,\Delta)}(L)\\
    H_{\Delta}^\mathrm{NA}(\mathcal{X},r\mathcal{L})&=r^nH_{\Delta}^\mathrm{NA}(\mathcal{X},\mathcal{L})\\
    I^\mathrm{NA}(\mathcal{X},r\mathcal{L})-J^\mathrm{NA}(\mathcal{X},r\mathcal{L})&=r^{n+1}(I^\mathrm{NA}(\mathcal{X},\mathcal{L})-J^\mathrm{NA}(\mathcal{X},\mathcal{L})).
\end{align*}
Thus, for any $\epsilon>0$ such that $\delta_{(X,\Delta)}(L)-\epsilon\in\mathbb{Q}_{>0}$, we obtain that 
\begin{align*}
    H^{\mathrm{NA}}_{\Delta}(\mathcal{X},\mathcal{L})&=(\delta_{(X,\Delta)}(L)-\epsilon)^{n+1}H^{\mathrm{NA}}_{\Delta}(\mathcal{X},(\delta_{(X,\Delta)}(L)-\epsilon)\mathcal{L})\\
    &\ge(\delta_{(X,\Delta)}(L)-\epsilon)^{n+1}(I^\mathrm{NA}(\mathcal{X},(\delta_{(X,\Delta)}(L)-\epsilon)\mathcal{L})-J^\mathrm{NA}(\mathcal{X},(\delta_{(X,\Delta)}(L)-\epsilon)\mathcal{L}))\\
    &\ge(\delta_{(X,\Delta)}(L)-\epsilon)(I^\mathrm{NA}(\mathcal{X},\mathcal{L})-J^\mathrm{NA}(\mathcal{X},\mathcal{L})).
\end{align*}
Hence, we obtain that $H^{\mathrm{NA}}_{\Delta}(\mathcal{X},\mathcal{L})\ge \delta_{(X,\Delta)}(L)(I^\mathrm{NA}(\mathcal{X},\mathcal{L})-J^\mathrm{NA}(\mathcal{X},\mathcal{L}))$.

Next, we deal with the first assertion.
It suffices to show the opposite direction to Theorem \ref{6.12}.
 By Proposition \ref{fjtprop} (1), we may assume that $\mathcal{X}$ dominates $X_{\mathbb{P}^1}$. 
 Furthermore, by Proposition \ref{fjtprop} (2) and Proposition \ref{lxlcmod}, we may assume that $(\mathcal{X},\Delta_{\mathcal{X}}+\mathcal{X}_{0,\mathrm{red}})$ is lc where $\Delta_{\mathcal{X}}$ is the strict transform of $\Delta\times\mathbb{P}^1$.
 Hence, we may also assume that $(\mathcal{X},\Delta_{\mathcal{X}}+\rho^*T_{1,\mathbb{P}^1}+\mathcal{X}_{0,\mathrm{red}})$ is sublc where $T_1\sim_{\mathbb{Q}}T$ is general.
 Let $G$ be a $\mathbb{Q}$-divisor defined by $G\sim_{\mathbb{Q}}\mathcal{L}-\rho^*L_{\mathbb{P}^1}$ and $\mathrm{Supp}\,G\subset \mathcal{X}_0$.
 Then, we see that
\begin{equation}\label{eq-ding-twist}
\mathrm{Ding}_{(\Delta,T)}(\mathcal{X},\mathcal{L})=\mathrm{Ding}_{(\Delta+T_1)}(\mathcal{X},\mathcal{L})=\min_E(A_{(X,\Delta)}(v_E)+b_E^{-1}\mathrm{ord}_E(G))-E^{\mathrm{NA}}(\mathcal{X},\mathcal{L})
\end{equation}
by Proposition \ref{bhjmod}, where $E$ runs over all irreducible components of $\mathcal{X}_0$.
Take $r\in\mathbb{Z}_{>0}$, we may assume that $r\mathcal{L}$ is a line bundle. 
Then, we set 
\[
F^{\lambda}H^0(X,mrL):=\{s\in H^0(X,mrL)\,|\,t^{-\lambda}\bar{s}\in H^0(\mathcal{X},mr\mathcal{L})\}
\]
for $\lambda\in\mathbb{Z}$ and $m\in\mathbb{Z}_{\ge0}$, where $\bar{s}\in H^0(X_{\mathbb{P}^1},mrL_{\mathbb{P}^1})$ is a $\mathbb{G}_m$-equivariant section corresponding to $s$.
We have that
$$F^{\lambda}H^0(X,mL)=\bigcap_E\{s\in H^0(X,mL)|v_E(s)+m b_E^{-1}\mathrm{ord}_E(G)\ge \lambda \}$$ 
for sufficiently divisible $m$ and all $\lambda\in\mathbb{Z}$ by \cite[Lemma 5.17]{BHJ}. 
Set for any $\lambda\in\mathbb{R}$ $$R^{(\lambda)}=\bigoplus_{m\in\mathbb{Z}_{\ge0}}F^{\lceil mr\lambda\rceil}H^0(X,mrL).$$
It is well-known that $R^{(\lambda)}$ is a graded algebra.
On the other hand, set $\lambda_{(\max)}=\max_Eb_E^{-1}\mathrm{ord}_E(G)$ and $\lambda_{(\mathrm{min})}=\min_Eb_E^{-1}\mathrm{ord}_E(G)$, where $E$ runs over all irreducible components of $\mathcal{X}_0$.

 Next, assume that $\delta_{(X,\Delta)}(L)\ge1$. 
 To see $\mathrm{Ding}_{(\Delta,T)}(\mathcal{X},\mathcal{L})\ge0$, it suffices to show 
 \begin{equation}
 A_{(X,\Delta)}(v_E)+b_E^{-1}\mathrm{ord}_E(G)-E^{\mathrm{NA}}(\mathcal{X},\mathcal{L})\ge0 \label{eq--delta--enough--to--show}
 \end{equation}
     for any irreducible component $E$ of $\mathcal{X}_0$ by (\ref{eq-ding-twist}).
 Note that
 \[
 E^{\mathrm{NA}}(\mathcal{X},\mathcal{L})= -(L^n)^{-1}\int_{\mathbb{R}}\lambda\frac{d}{d\lambda}\mathrm{vol}(R^{(\lambda)})d\lambda
 \]
  by \cite[Th.~5.3, Prop.~5.9 and Lem.~7.3]{BHJ}.
  We note that the support of the distribution $\frac{d}{d\lambda}\mathrm{vol}(R^{(\lambda)})$ coincides with $[\lambda_{(\min)},\lambda_{(\max)}]$ by \cite[Theorem 5.16]{BHJ}. 
 Since $\delta_{(X,\Delta)}(L)\ge1$, we obtain that $A_{(X,\Delta)}(v_E)\ge S_L(v_E)$ and 
\begin{align*}
&A_{(X,\Delta)}(v_E)+b_E^{-1}\mathrm{ord}_E(G)-E^{\mathrm{NA}}(\mathcal{X},\mathcal{L})\ge S_L(v_E)+b_E^{-1}\mathrm{ord}_E(G)+(L^n)^{-1}\int_{\mathbb{R}}\lambda\frac{d}{d\lambda}\mathrm{vol}(R^{(\lambda)})d\lambda\\
&\ge\int_{x\ge0}(L^n)^{-1}\mathrm{vol}(L-xv_E)dx+b_E^{-1}\mathrm{ord}_E(G)-\lambda_{(\mathrm{min})}-\int_{\lambda\ge\lambda_{(\mathrm{min})}}(L^n)^{-1}\mathrm{vol}(R^{(\lambda)})d\lambda\\
&\ge \int_{x\ge0}(L^n)^{-1}\mathrm{vol}(L-xv_E)dx-\int_{\lambda\ge b_E^{-1}\mathrm{ord}_E(G)}(L^n)^{-1}\mathrm{vol}(R^{(\lambda)})d\lambda\\
&= \int_{x\ge0}(L^n)^{-1}\mathrm{vol}(L-xv_E)dx-\int_{x\ge0}(L^n)^{-1}\mathrm{vol}(R^{(x+ b_E^{-1}\mathrm{ord}_E(G))})dx
\end{align*}
for any irreducible component $E$ of $\mathcal{X}_0$ by $(L^n)^{-1}\mathrm{vol}(R^{(\lambda)})\le1$. For sufficiently divisible $m$ and all $x\in\mathbb{Z}$, we have
\begin{align*}
F^{(x+ mb_E^{-1}\mathrm{ord}_E(G))}H^0(X,mL)&=\bigcap_F\{s\in H^0(X,mL)|v_F(s)+m b_F^{-1}\mathrm{ord}_F(G)\ge x+m b_E^{-1}\mathrm{ord}_E(G) \}\\
&\subset \{s\in H^0(X,mL)|v_E(s)\ge x \}
\end{align*}
where $F$ runs over all irreducible components of $\mathcal{X}_0$. Thus, we have
\[
\int_{x\ge0}(L^n)^{-1}\mathrm{vol}(L-xv_E)dx-\int_{x\ge0}(L^n)^{-1}\mathrm{vol}(R^{(x+ b_E^{-1}\mathrm{ord}_E(G))})dx\ge0.
\]
Therefore, we obtain (\ref{eq--delta--enough--to--show}) and that $(X,\Delta,T,L)$ is log-twisted Ding-semistable.
We are done.
\end{proof}



  \subsection{Log-twisted K-semistable Fano pair and destabilizing boundary}\label{sec-stp1}
  
  Finally, we prove that Ding-stability and K-stability coincide for log-twisted Fano pairs when $T$ is semiample. 
  The following is the main result in this subsection.

  \begin{thm}\label{appendix}
Let $(X,B,T,L=-K_X-B-T)$ be a K-semistable but not K-stable log-twisted Fano pair such that $T$ is semiample.
Then there exists a $\mathbb{Q}$-Cartier $\mathbb{Q}$-divisor $D\in |L|_{\mathbb{Q}}$ such that $(X,B+\epsilon D,T,(1-\epsilon) L)$ is a K-unstable log-twisted Fano pair for any sufficiently small rational number $0<\epsilon<1$.
\end{thm}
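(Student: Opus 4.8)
The strategy is to reduce Theorem \ref{appendix} to the valuative picture established in \S\S A.2 and to the structure theory of strictly K-semistable Fano pairs, following the philosophy of Li--Xu \cite{LX}, Fujita \cite{Fjt} and Blum--Liu--Zhou \cite{BLZ}, but carrying the twist $T$ along. First I would observe that by Corollary \ref{lastcoro} and Theorem \ref{bhjz}, the hypothesis that $(X,B,T,L)$ is K-semistable but not K-stable translates into $\delta_{(X,B)}(L)=1$ together with the failure of uniform Ding-stability; hence there is a prime divisor $F$ over $X$ (or a sequence of such) with $\beta_{(X,B,T)}(F)=0$, i.e. $A_{(X,B)}(F)\mathrm{vol}(L)=\int_0^{\tau_L(F)}\mathrm{vol}(L-xF)\,dx$. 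Using Lemma \ref{dtw2} this is the same as $\beta_{(X,B+T_1)}(F)=0$ for a general $T_1\sim_{\mathbb Q}T$; so we are in the genuinely logarithmic situation $(X,B+T_1)$ subklt, and we may invoke the subklt analogues of \cite{LX}, \cite{Fjt}, \cite{BLZ} that the subsection promises to establish. In particular $F$ should compute $\delta$, be dreamy/divisorial, and induce a special test configuration $(\mathcal{X},\mathcal{L})$ degenerating $(X,B+T_1)$ to a log-twisted Fano pair $(X_0,B_0,T_0)$ with $\mathrm{Ding}_{(B,T)}(\mathcal{X},\mathcal{L})=\mathrm{DF}_{(B,T)}(\mathcal{X},\mathcal{L})=0$; the semiampleness of $T$ is exactly what makes $T_0$ a well-defined semiample class on $X_0$, so that $(X_0,B_0,T_0,L_0)$ is again a log-twisted Fano pair.

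Granting this, I would produce the destabilizing boundary $D$ as follows. The divisor $F$ has $\beta_{(X,B,T)}(F)=0$ but $j_{(X,B,T)}(F)>0$ (as $F$ is non-trivial), so $F$ lies on the boundary of the region where $\beta\ge\delta\cdot j$. The key point is to perturb the boundary $B$ by a small multiple of a general $D\in|L|_{\mathbb Q}$ so that $F$ becomes strictly destabilizing. Writing $\beta_{(X,B+\epsilon D,T)}(F) = A_{(X,B+\epsilon D)}(F)\mathrm{vol}((1-\epsilon)L) - \int_0^{\tau}\mathrm{vol}((1-\epsilon)L-xF)\,dx$, and using $A_{(X,B+\epsilon D)}(F)=A_{(X,B)}(F)-\epsilon\,\mathrm{ord}_F(D)$ together with the $\mathbb Q$-homogeneity of volume, one gets
\[
\beta_{(X,B+\epsilon D,T)}(F) = (1-\epsilon)^n\beta_{(X,B,T)}(F) - \epsilon\,\mathrm{ord}_F(D)\,\mathrm{vol}((1-\epsilon)L) + O(\epsilon^2),
\]
which is negative to first order in $\epsilon$ provided we choose $D$ with $\mathrm{ord}_F(D)>0$; such a $D$ exists because $L$ is big and $F$ is a divisor over $X$ (choose $D$ vanishing along the center of $F$, e.g. from a suitable linear subsystem, or pull back an effective divisor through the center). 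Then by the valuative criterion (Theorem \ref{6.12}, Corollary \ref{lastcoro}, applied to $(X,B+\epsilon D,T,(1-\epsilon)L)$), the negativity of $\beta_{(X,B+\epsilon D,T)}(F)$ forces this log-twisted Fano pair to be Ding-unstable, hence K-unstable by Proposition \ref{fjtprop}. One must check that $(X,B+\epsilon D)$ remains klt and that $(X,B+\epsilon D,T,(1-\epsilon)L)$ is still a log-twisted Fano pair: indeed $K_X+B+\epsilon D+T \equiv -L+\epsilon D \equiv -(1-\epsilon)L$ since $D\sim_{\mathbb Q}L$, and kltness is preserved for $0<\epsilon\ll1$.

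The main obstacle is the first paragraph: establishing, in the \emph{subklt / log-twisted} setting with $T$ merely semiample, the existence of a divisor $F$ computing $\delta_{(X,B)}(L)=1$ that is genuinely divisorial (not just a limit of divisorial valuations) and induces a \emph{special} test configuration with vanishing log-twisted Donaldson--Futaki invariant — i.e. the log-twisted analogues of the Li--Xu special-test-configuration machinery and the Blum--Liu--Zhou finite-generation/openness results. The semiampleness hypothesis on $T$ is used precisely here: running the MMP-type construction of the special degeneration, the twist $T$ must descend to the central fiber as a semiample class (so that the degenerate pair is again log-twisted Fano and the Donaldson--Futaki computations go through via Lemma \ref{dtw} / Proposition \ref{bhjmod}); when $T$ is only pseudoeffective this breaks down. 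I would carry out this reduction by adapting the proofs of \cite[Theorem 1]{LX}, \cite[Theorem 1.4]{BLZ} and \cite{Fjt} verbatim, replacing $(X,B)$ throughout by $(X,B+T_1)$ for general $T_1\sim_{\mathbb Q}T$ and checking at each step (as in Lemmas \ref{dtw}, \ref{dtw2} and Remark \ref{remremrem}) that the choice of general $T_1$ does not affect log discrepancies, Ding invariants, or the MMP output. Once that infrastructure is in place, the perturbation argument above is short.
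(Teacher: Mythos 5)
Your overall strategy is the same as the paper's: use the log-twisted special test configuration machinery (the twisted analogues of Li--Xu, Fujita, Blum--Liu--Zhou, which the paper develops in the results preceding this theorem) to produce a dreamy prime divisor $F$ over $X$ with $\beta_{(X,B,T)}(F)=0$, then perturb the boundary by $\epsilon D$ and apply the valuative criterion. However, your perturbation computation contains a genuine error. Replacing $L$ by $(1-\epsilon)L$ rescales the integral term of $\beta$ by $(1-\epsilon)^{n+1}$ while the log-discrepancy term only picks up $(1-\epsilon)^n$, so the exact identity is
\[
\beta_{(X,B+\epsilon D,T)}(F)=(1-\epsilon)^n\beta_{(X,B,T)}(F)+\epsilon(1-\epsilon)^n\Bigl(A_{(X,B)}(F)-\mathrm{ord}_F(D)\Bigr)\mathrm{vol}(L),
\]
using $\int_0^{\tau_L(F)}\mathrm{vol}(L-xF)\,dx=A_{(X,B)}(F)\mathrm{vol}(L)$. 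The term $+\epsilon(1-\epsilon)^nA_{(X,B)}(F)\mathrm{vol}(L)$ is first order in $\epsilon$ and positive; you have absorbed it into $O(\epsilon^2)$. Consequently your conclusion that negativity holds ``provided $\mathrm{ord}_F(D)>0$'' is false: one needs $\mathrm{ord}_F(D)>A_{(X,B)}(F)$, i.e.\ $\alpha_F<1$ for the valuation $F$, and if instead $\mathrm{ord}_F(D)<A_{(X,B)}(F)$ your perturbation makes $\beta$ of $F$ strictly \emph{positive}.

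The existence of $D\in|L|_{\mathbb Q}$ with $\mathrm{ord}_F(D)>A_{(X,B)}(F)$ is exactly the content of the paper's Lemma \ref{fo2}, and it is not automatic from bigness of $L$: it uses the hypothesis $\beta_{(X,B,T)}(F)=0$ in an essential way. The argument is that $\alpha_F\tau_L(F)\le A_{(X,B)}(F)$ (Lemma \ref{fo}), so if $\alpha_F\ge1$ then $A_{(X,B)}(F)\ge\tau_L(F)$, whence
\[
\beta_{(X,B,T)}(F)\ge\int_0^{\tau_L(F)}\bigl(\mathrm{vol}(L)-\mathrm{vol}(L-xF)\bigr)\,dx>0
\]
by the strict monotonicity of volume along a big divisor (\cite[Lemma 5.13]{BHJ}), contradicting $\beta_{(X,B,T)}(F)=0$. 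With this lemma in hand the corrected first-order computation closes the proof exactly as in the paper. The rest of your outline (reduction to the subklt pair $(X,B+T_1)$ for general $T_1\sim_{\mathbb Q}T$, the role of semiampleness of $T$ in the MMP steps, and the final appeal to the valuative criterion to conclude K-instability) matches the paper's route and is fine.
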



We note that the logarithmic case of Theorem \ref{appendix} had been shown in \cite[Proposition 3.3]{BLX}.
To show Theorem \ref{appendix}, we first generalize the results of \cite{LX}, \cite{Fjt} and \cite[Theorem 3.12]{BLZ}.

\begin{thm}[cf., {\cite[Theorem 2]{LX}}, {\cite[Theorem 6.7]{Fjt}}]\label{thm-fjt-6.7-lc-modif}
Let $(X,B,T,L=-K_X-B-T)$ be a log-twisted Fano pair such that $(X,B)$ is klt, $T$ is semiample and $(\mathcal{X},\mathcal{L})$ be a normal ample test configuration. Then there exist $d\in\mathbb{Z}_{>0}$ and an ample test configuration $(\mathcal{X}^\mathrm{lc},\mathcal{L}^\mathrm{lc})$ dominating $(\mathcal{X}^{(d)},\mathcal{L}^{(d)})$ via $\pi$ that is the normalization of the base change of the $d$-th power map of $\mathbb{P}^1$ such that:
\begin{itemize}
\item[(1)] 
$(\mathcal{X}^\mathrm{lc},\mathcal{B}^\mathrm{lc}+(\rho^\mathrm{lc})_*^{-1}T_{1,\mathbb{P}^1}+\mathcal{X}^\mathrm{lc}_0)$ is lc for any general effective divisor $T_1\sim_{\mathbb{Q}}T$, where $\mathcal{B}^{\mathrm{lc}}$ is the strict transform of $B_{\mathbb{P}^1}$ and $\rho^{\mathrm{lc}}\colon\mathcal{X}^{\mathrm{lc}}\dashrightarrow X_{\mathbb{P}^1}$ is the canonical birational map.
\item[(2)] $\mathrm{DF}_{(B,T)}(\mathcal{X}^\mathrm{lc},\mathcal{L}^\mathrm{lc})\le d\, \mathrm{DF}_{(B,T)}(\mathcal{X},\mathcal{L})$ and
\[
\mathrm{Ding}_{(B,T)}(\mathcal{X}^\mathrm{lc},\mathcal{L}^\mathrm{lc})-\delta\, J^{\mathrm{NA}}(\mathcal{X}^\mathrm{lc},\mathcal{L}^\mathrm{lc})\le d\, (\mathrm{Ding}_{(B,T)}(\mathcal{X},\mathcal{L})-\delta\, J^{\mathrm{NA}}(\mathcal{X},\mathcal{L})),
\]
 for any $\delta\in [0,1)$.
\end{itemize}
\end{thm}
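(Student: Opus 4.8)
The plan is to absorb the twist $T$ into the boundary and invoke the untwisted logarithmic statements \cite[Theorem 2]{LX} and \cite[Theorem 6.7]{Fjt}. Since $T$ is semiample, fix a sufficiently general effective $T_1\sim_{\mathbb{Q}}T$; then $(X,B+T_1)$ is klt and $(X,B+T_1,L=-(K_X+B+T_1))$ is a log Fano pair. By Lemmas \ref{dtw} and \ref{dtw2} and Proposition \ref{fjtprop}, for every normal semiample test configuration the functionals $\mathrm{DF}_{(B,T)}$, $\mathrm{Ding}_{(B,T)}$ and $J^{\mathrm{NA}}$ coincide with their $(B+T_1)$-versions as soon as $T_1$ is general relative to that test configuration. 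So the task is to run the lc-modification construction of \cite{LX},\cite{Fjt} for $(X,B+T_1,L)$ while producing a single $(\mathcal{X}^{\mathrm{lc}},\mathcal{L}^{\mathrm{lc}})$ that works for all general $T_1$ at once.

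Applying (the proof of) Proposition \ref{lxlcmod} to the klt pair $(X,B)$ and the normal ample test configuration $(\mathcal{X},\mathcal{L})$, I would obtain $d\in\mathbb{Z}_{>0}$, the normalized base change $(\mathcal{X}^{(d)},\mathcal{L}^{(d)})$, and a $\mathbb{G}_m$-equivariant projective birational morphism $p:\mathcal{X}^{\mathrm{lc}}\to\mathcal{X}^{(d)}$, the log canonical model of $(\mathcal{X}^{(d)},\mathcal{B}^{(d)}+\mathcal{X}^{(d)}_0)$ (equivariance being forced by its uniqueness); then $(\mathcal{X}^{\mathrm{lc}},\mathcal{B}^{\mathrm{lc}}+\mathcal{X}^{\mathrm{lc}}_0)$ is lc, so $\mathcal{X}^{\mathrm{lc}}_0$ is reduced, and $p$ is an isomorphism over $\mathbb{A}^1\setminus 0$. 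Because $T$ is semiample, the strict transform to $\mathcal{X}^{\mathrm{lc}}$ of a general member of $|T_{\mathbb{A}^1}|_{\mathbb{Q}}$ misses every lc centre of $(\mathcal{X}^{\mathrm{lc}},\mathcal{B}^{\mathrm{lc}}+\mathcal{X}^{\mathrm{lc}}_0)$, so $(\mathcal{X}^{\mathrm{lc}},\mathcal{B}^{\mathrm{lc}}+(\rho^{\mathrm{lc}})^{-1}_*(T_1)_{\mathbb{A}^1}+\mathcal{X}^{\mathrm{lc}}_0)$ stays lc for every general $T_1$; this is condition (1), and it is exactly the place where semiampleness of $T$ (not mere nefness) is used. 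I would then polarize $\mathcal{X}^{\mathrm{lc}}$ by $\mathcal{L}^{\mathrm{lc}}=(1-\epsilon)\,p^*\mathcal{L}^{(d)}-\epsilon(K_{\mathcal{X}^{\mathrm{lc}}/\mathbb{P}^1}+\mathcal{B}^{\mathrm{lc}}+(\rho^{\mathrm{lc}})^{-1}_*(T_1)_{\mathbb{A}^1})$ for $0<\epsilon\ll1$: the second summand is $p$-ample since $\mathcal{X}^{\mathrm{lc}}$ is an lc model, so $\mathcal{L}^{\mathrm{lc}}$ is globally ample and restricts to $L$ on the general fibre.

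For condition (2) I would compare $(\mathcal{X}^{\mathrm{lc}},\mathcal{L}^{\mathrm{lc}})$ with $(\mathcal{X}^{(d)},\mathcal{L}^{(d)})$ in two steps. Base change: $\mathrm{DF}_{(B,T)}(\mathcal{X}^{(d)},\mathcal{L}^{(d)})\le d\,\mathrm{DF}_{(B,T)}(\mathcal{X},\mathcal{L})$ by Definition \ref{rramamra} and Lemma \ref{dtw}, while $\mathrm{Ding}_{(B,T)}$ and $J^{\mathrm{NA}}$ scale exactly by $d$ (Proposition \ref{fjtprop}(2)), so $\mathrm{Ding}_{(B,T)}(\mathcal{X}^{(d)},\mathcal{L}^{(d)})-\delta J^{\mathrm{NA}}(\mathcal{X}^{(d)},\mathcal{L}^{(d)})=d(\mathrm{Ding}_{(B,T)}(\mathcal{X},\mathcal{L})-\delta J^{\mathrm{NA}}(\mathcal{X},\mathcal{L}))$ for all $\delta$. lc modification: by pullback invariance (Proposition \ref{fjtprop}(1)) the three functionals of $(\mathcal{X}^{\mathrm{lc}},p^*\mathcal{L}^{(d)})$ coincide with those of $(\mathcal{X}^{(d)},\mathcal{L}^{(d)})$, and they are continuous in $\epsilon$; the content is that the perturbation above does not increase $\mathrm{DF}_{(B,T)}$ nor $\mathrm{Ding}_{(B,T)}-\delta J^{\mathrm{NA}}$ for small $\epsilon$. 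For $\mathrm{DF}_{(B,T)}$ this is the intersection-theoretic computation of \cite[Theorem 2]{LX}; for $\mathrm{Ding}_{(B,T)}$ I would use the explicit formula of Proposition \ref{bhjmod}, noting that passing to the lc model only adds central-fibre components $E$ of small $A_{(X,B)}(v_E)$ and hence cannot raise $\min_E(A_{(X,B)}(v_E)+b_E^{-1}\mathrm{ord}_E(\mathcal{L}-L_{\mathbb{A}^1}))$, while $E^{\mathrm{NA}}$ and $J^{\mathrm{NA}}$ move by $O(\epsilon)$ uniformly in $\delta$; this is the $(B+T_1)$-analogue of \cite[Theorem 6.7]{Fjt}. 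Combining the two steps gives (2).

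I expect the main obstacle to be not any single estimate — each is the $(B+T_1)$-version of a known argument — but the bookkeeping that makes the lc modification and the perturbed polarisation uniformly compatible with all general twists $T_1$: one must carry out the modification for $(\mathcal{X}^{(d)},\mathcal{B}^{(d)}+\mathcal{X}^{(d)}_0)$ before introducing $T_1$, keep track of $\mathbb{G}_m$-equivariance via uniqueness of the lc model, and verify that adding a general $(T_1)_{\mathbb{A}^1}$ alters neither the set of divisors extracted by $p$ nor, up to $O(\epsilon)$, the three functionals. These are precisely the steps forcing the hypothesis that $T$ be semiample.
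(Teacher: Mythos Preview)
Your overall strategy --- absorb $T$ into the boundary and reduce to the log case --- is the same as the paper's, but you and the paper make the reduction in opposite orders, and the paper's order is what makes the argument close.

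The paper takes the lc modification \emph{including} $T_1$, i.e.\ of $(\mathcal{X}^{(d)},\mathcal{B}^{(d)}+(\rho^{(d)})^{-1}_*(T_1)_{\mathbb{A}^1}+\mathcal{X}^{(d)}_0)$, and then shows that the resulting $\mathcal{X}^{\mathrm{lc}}$ is independent of the choice of general $T_1$: on a common log resolution $\mathcal{Y}\to\mathcal{X}^{(d)}$ dominating $X_{\mathbb{A}^1}$, the divisors $\rho_{\mathcal{Y}}^*(T_1)_{\mathbb{A}^1}$ and $\rho_{\mathcal{Y}}^*(T_2)_{\mathbb{A}^1}$ are genuinely $\mathbb{Q}$-linearly equivalent, so \cite[Lemma 3.6.9]{BCHM} gives the same lc model. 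One can then choose $T_1$ general also for $\mathcal{X}^{\mathrm{lc}}$, and \cite[Theorem 6.7]{Fjt} applied to the log Fano pair $(X,B+T_1,L)$ gives (2) with no re-proof of inequalities.

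Your route --- lc-modify for $(\mathcal{X}^{(d)},\mathcal{B}^{(d)}+\mathcal{X}^{(d)}_0)$ and add $T_1$ afterwards --- does not reduce cleanly. Your $\mathcal{X}^{\mathrm{lc}}$ is a priori \emph{not} the lc modification for the pair with $T_1$ (since $0\not\sim_{\mathbb{Q}}T_1$, the BCHM lemma does not compare them), so you cannot cite \cite[Theorem 6.7]{Fjt} for $(X,B+T_1)$; the assertion you flag at the end, that adding a general $(T_1)_{\mathbb{A}^1}$ ``alters neither the set of divisors extracted by $p$'', is precisely the missing step and is not established. Separately, your polarization has a sign error: on an lc model $K_{\mathcal{X}^{\mathrm{lc}}}+\mathcal{B}^{\mathrm{lc}}+\mathcal{X}^{\mathrm{lc}}_0$ is $p$-\emph{ample}, so $-(K_{\mathcal{X}^{\mathrm{lc}}/\mathbb{P}^1}+\mathcal{B}^{\mathrm{lc}}+(\rho^{\mathrm{lc}})^{-1}_*(T_1)_{\mathbb{A}^1})$ is $p$-anti-ample and your $\mathcal{L}^{\mathrm{lc}}$ is not ample. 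The correct perturbation, as in the paper and \cite{LX}, is $\mathcal{L}^{\mathrm{lc}}=h^*\mathcal{L}^{(d)}-\epsilon F$ with $F$ $h$-exceptional and $-F\sim_{\mathcal{X}^{(d)},\mathbb{Q}}K_{\mathcal{X}^{\mathrm{lc}}}+\cdots$.
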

 
\begin{proof}
By the semistable reduction theorem \cite{KKMS}, there exist a log smooth pair $(\mathcal{Y},\mathrm{Ex}(g)+g^{-1}_*(\mathcal{B}^{(d)})+\mathcal{Y}_0)$ and a birational morphism $g:\mathcal{Y}\to \mathcal{X}^{(d)}$ such that the central fiber $\mathcal{Y}_0$ is reduced (cf., \cite[Theorem 7.17]{KoMo}) for some $d\in\mathbb{Z}_{>0}$. 
We may assume that $(\mathcal{Y},\mathrm{Ex}(g)+g^{-1}_*(\mathcal{B}^{(d)})+\mathcal{Y}_0)$ admits a $\mathbb{G}_m$-action such that $g$ is $\mathbb{G}_m$-equivariant and there exists a morphism $\rho_{\mathcal{Y}}:\mathcal{Y}\to X_{\mathbb{P}^1}$ such that $g$ and $\rho_{\mathcal{Y}}$ coincide over $\mathbb{P}^1\setminus\{0\}$.
Then take a general effective divisor $T_1\sim_{\mathbb{Q}}T$ such that $(\mathcal{Y},g_*^{-1}(\mathcal{B}^{(d)}+\mathcal{X}^{(d)}_0)+\rho_{\mathcal{Y}}^*T_{1,\mathbb{P}^1}+E)$ is log smooth and dlt, $(\rho_{\mathcal{Y}})^{-1}_*T_{1,\mathbb{P}^1}=\rho_{\mathcal{Y}}^*T_{1,\mathbb{P}^1}$ and 
\begin{align*}
d\, \mathrm{DF}_{(B,T)}(\mathcal{X},\mathcal{L})=\mathrm{DF}_{(B,T)}(\mathcal{X}^{(d)},\mathcal{L}^{(d)})&=\mathrm{DF}_{B+T_1}(\mathcal{X}^{(d)},\mathcal{L}^{(d)})=d\,\mathrm{DF}_{B+T_1}(\mathcal{X},\mathcal{L})\\
d\, \mathrm{Ding}_{(B,T)}(\mathcal{X},\mathcal{L})=\mathrm{Ding}_{(B,T)}(\mathcal{X}^{(d)},\mathcal{L}^{(d)})&=\mathrm{Ding}_{B+T_1}(\mathcal{X}^{(d)},\mathcal{L}^{(d)})=d\,\mathrm{Ding}_{B+T_1}(\mathcal{X},\mathcal{L}).
\end{align*}
We take the lc modification $h:\mathcal{X}^\mathrm{lc}\to (\mathcal{X}^{(d)},\mathcal{B}^{(d)}+(\rho^{(d)})^{-1}_*T_{1,\mathbb{P}^1}+\mathcal{X}^{(d)}_0)$ by Proposition \ref{lxlcmod}, where $\rho^{(d)}\colon\mathcal{X}^{(d)}\dashrightarrow X_{\mathbb{P}^1}$ is the canonical birational map. 
Let $\pi:\mathcal{Y}\dashrightarrow\mathcal{X}^{\mathrm{lc}}$ be the canonical birational contraction and $F$ be the summation of $h$-exceptional prime divisors such that
\[
-F\sim_{\mathcal{X}^{(d)},\mathbb{Q}}K_{\mathcal{X}^{\mathrm{lc}}}+\pi_*(g_*^{-1}(\mathcal{B}^{(d)}+\mathcal{X}^{(d)}_0)+\rho_{\mathcal{Y}}^*T_{1,\mathbb{P}^1}+E).
\]
Let $\mathcal{B}^\mathrm{lc}=\pi_*(g_*^{-1}\mathcal{B}^{(d)}+E)$ and $\mathcal{L}^\mathrm{lc}$=$h^*\mathcal{L}^{(d)}-\epsilon F$ for sufficiently small $\epsilon>0$.
Then, it follows from the proof of \cite[Theorem 6.7]{Fjt} that
\begin{align*}
\mathrm{DF}_{B+T_1}(\mathcal{X}^\mathrm{lc},\mathcal{L}^\mathrm{lc})&\le \mathrm{DF}_{B+T_1}(\mathcal{X}^{(d)},\mathcal{L}^{(d)})\quad\text{and}\\
\mathrm{Ding}_{B+T_1}(\mathcal{X}^\mathrm{lc},\mathcal{L}^\mathrm{lc})-\delta\, J^{\mathrm{NA}}(\mathcal{X}^\mathrm{lc},\mathcal{L}^\mathrm{lc})&\le \mathrm{Ding}_{B+T_1}(\mathcal{X}^{(d)},\mathcal{L}^{(d)})-\delta\, J^{\mathrm{NA}}(\mathcal{X}^{(d)},\mathcal{L}^{(d)})
\end{align*}
for any $\delta\in[0,1)$.

Fix a general effective $\mathbb{Q}$-divisor $T_1\sim_{\mathbb{Q}}T$ and let $\mathcal{X}^{\mathrm{lc}}$ be a test configuration constructed in the previous paragraph for $T_1$.
Here, we assert that the construction of $\mathcal{X}^{\mathrm{lc}}$ is independent of the choice of general $T_1$.
Indeed, let $T_2\sim_{\mathbb{Q}}T_1$ be a general effective $\mathbb{Q}$-divisor such that $(\mathcal{Y},g_*^{-1}(\mathcal{B}^{(d)}+\mathcal{X}^{(d)}_0)+\rho_{\mathcal{Y}}^*T_{2,\mathbb{P}^1}+E)$ is log smooth and dlt and $(\rho_{\mathcal{Y}})^{-1}_*T_{2,\mathbb{P}^1}=\rho_{\mathcal{Y}}^*T_{2,\mathbb{P}^1}$.
We note that $\mathcal{X}^\mathrm{lc}$ is the lc model of $(\mathcal{Y},g_*^{-1}(\mathcal{B}^{(d)}+\mathcal{X}^{(d)}_0)+\rho_{\mathcal{Y}}^*T_{1,\mathbb{P}^1}+E)$ by \cite[Lemma 2.1]{OX} and the construction. 
By \cite[Lemma 3.6.9]{BCHM}, the lc models of $(\mathcal{Y},g_*^{-1}(\mathcal{B}^{(d)}+\mathcal{X}^{(d)}_0)+\rho_{\mathcal{Y}}^*T_{i,\mathbb{P}^1}+E)$ over $\mathcal{X}^{(d)}$ are the same for $i=1,2$.
Thus, $\mathcal{X}^{\mathrm{lc}}$ is independent of the choice of general $T_1$ and 
by choosing $T_1$ general enough, we have that
\begin{align*}
\mathrm{Ding}_{(B,T)}(\mathcal{X}^\mathrm{lc},\mathcal{L}^\mathrm{lc})&=\mathrm{Ding}_{B+T_1}(\mathcal{X}^\mathrm{lc},\mathcal{L}^\mathrm{lc}) \\
\mathrm{DF}_{(B,T)}(\mathcal{X}^\mathrm{lc},\mathcal{L}^\mathrm{lc})&=\mathrm{DF}_{B+T_1}(\mathcal{X}^\mathrm{lc},\mathcal{L}^\mathrm{lc}).
\end{align*}
Hence the assertion follows.
\end{proof}
 
\begin{thm}[cf., {\cite[Theorem 3]{LX}}, {\cite[Theorem 6.8]{Fjt}}]\label{6.8}
Let $(X,B,T,L=-K_X-B-T)$ be a log-twisted Fano pair such that $T$ is semiample, and $(\mathcal{X},\mathcal{L})$ be a normal ample test configuration such that $(\mathcal{X},\mathcal{B}+\rho^{-1}_*T_{0,\mathbb{P}^1}+\mathcal{X}_0)$ is lc, where $\rho\colon\mathcal{X}\dashrightarrow X_{\mathbb{P}^1}$ is canonical, $\mathcal{B}=\rho^{-1}_*B_{\mathbb{P}^1}$ and $T_0\sim_{\mathbb{Q}}T$ is a general effective $\mathbb{Q}$-divisor. 

Then there exists a normal and ample test configuration $(\mathcal{X}^{\mathrm{ac}},\mathcal{L}^\mathrm{ac})$ for $(X,L)$ such that $(\mathcal{X}^\mathrm{ac},\mathcal{B}^\mathrm{ac}+(\rho^\mathrm{ac})^{-1}_*T_{1,\mathbb{P}^1}+\mathcal{X}^\mathrm{ac}_0)$ is lc, $-(K_{\mathcal{X}^\mathrm{ac}}+\mathcal{B}^\mathrm{ac}+(\rho^\mathrm{ac})^{-1}_*T_{1,\mathbb{P}^1})\sim_{\mathbb{P}^1,\mathbb{Q}}\mathcal{L}^\mathrm{ac}$, $\mathrm{DF}_{(B,T)}(\mathcal{X}^\mathrm{ac},\mathcal{L}^\mathrm{ac})\le \mathrm{DF}_{(B,T)}(\mathcal{X},\mathcal{L})$ and 
\[
\mathrm{Ding}_{(B,T)}(\mathcal{X}^\mathrm{ac},\mathcal{L}^\mathrm{ac})-\delta\, J^{\mathrm{NA}}(\mathcal{X}^\mathrm{ac},\mathcal{L}^\mathrm{ac})\le \mathrm{Ding}_{(B,T)}(\mathcal{X},\mathcal{L})-\delta\, J^{\mathrm{NA}}(\mathcal{X},\mathcal{L}),
\]
for any $\delta\in[0,1)$ and any general effective $\mathbb{Q}$-divisor $T_1\sim_{\mathbb{Q}}T$, where $\rho^{\mathrm{ac}}\colon\mathcal{X}^{\mathrm{ac}}\dashrightarrow X_{\mathbb{P}^1}$ is canonical and $\mathcal{B}^{\mathrm{ac}}:=(\rho^{\mathrm{ac}})^{-1}_*B_{\mathbb{P}^1}$.
\end{thm}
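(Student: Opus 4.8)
The plan is to follow the strategy of \cite[Theorem 3]{LX} and \cite[Theorem 6.8]{Fjt}: one runs a minimal model program over $\mathbb{P}^1$ that transforms $(\mathcal{X},\mathcal{L})$ into an anti-canonically polarized test configuration, keeping track of $\mathrm{DF}_{(B,T)}$ and of $\mathrm{Ding}_{(B,T)}-\delta J^{\mathrm{NA}}$ along the way. The only genuinely new feature---that $T$ is merely semiample rather than ample---will be handled, as in the proofs of Lemma \ref{dtw} and Lemma \ref{dtw2}, by working with a sufficiently general member $T_1\sim_{\mathbb{Q}}T$ and invoking \cite[Lemma 3.6.9]{BCHM} (as in Proposition \ref{lxlcmod}) to see that the outcome is independent of the choice.

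So I would first fix a general $T_1\sim_{\mathbb{Q}}T$. By Definition \ref{detw} together with Lemmas \ref{dtw} and \ref{dtw2}, for a test configuration $\mathcal{Y}$ and any $T_1$ general with respect to it one has $\mathrm{DF}_{(B,T)}(\mathcal{Y},\mathcal{M})=\mathrm{DF}_{B+T_1}(\mathcal{Y},\mathcal{M})$ and $\mathrm{Ding}_{(B,T)}(\mathcal{Y},\mathcal{M})=\mathrm{Ding}_{B+T_1}(\mathcal{Y},\mathcal{M})$; moreover the hypothesis that $(\mathcal{X},\mathcal{B}+\rho^{*}T_{\mathbb{A}^1}+\mathcal{X}_0)$ is lc is precisely the statement that $(\mathcal{X},\mathcal{B}+\rho^{-1}_{*}T_{1,\mathbb{A}^1}+\mathcal{X}_0)$ is lc. Thus $(\mathcal{X},\mathcal{L})$ is an lc ample test configuration for the log Fano pair $(X,B+T_1,-(K_X+B+T_1))$, and I can apply \cite[Theorem 6.8]{Fjt} to it. The proof there runs the $(K_{\overline{\mathcal{X}}}+\mathcal{B}+\rho^{-1}_{*}T_{1,\mathbb{P}^1}+\overline{\mathcal{X}}_{0,\mathrm{red}})$-MMP over $\mathbb{P}^1$, $\mathbb{G}_m$-equivariantly, with scaling of $\overline{\mathcal{L}}$; since this divisor restricts to $-L$ on a general fibre, the scaling MMP fixes the general fibre and---by termination and relative abundance for lc pairs, \cite{HX}, \cite{Has}---terminates at a model $\mathcal{X}^{\mathrm{ac}}$ on which $-(K_{\mathcal{X}^{\mathrm{ac}}}+\mathcal{B}^{\mathrm{ac}}+(\rho^{\mathrm{ac}})^{-1}_{*}T_{1,\mathbb{A}^1})$ is ample and $\mathbb{A}^1$-$\mathbb{Q}$-linearly equivalent to a semiample extension $\mathcal{L}^{\mathrm{ac}}$ of $L$, with every step preserving the lc condition, not increasing $\mathrm{DF}_{B+T_1}$, and not increasing $\mathrm{Ding}_{B+T_1}-\delta J^{\mathrm{NA}}$ for $\delta\in[0,1)$ (the latter two via the step-by-step estimates of \cite[Theorem 6.8]{Fjt}, which rest on the explicit formula of Proposition \ref{bhjmod}).

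Since only finitely many birational models $\mathcal{X}\dashrightarrow\cdots\dashrightarrow\mathcal{X}^{\mathrm{ac}}$ appear, I would take $T_1$ general with respect to all of them simultaneously; \cite[Lemma 3.6.9]{BCHM} then shows (just as for Proposition \ref{lxlcmod}) that $\mathcal{X}^{\mathrm{ac}}$ and $\mathcal{L}^{\mathrm{ac}}$ do not depend on this choice. Translating the inequalities above back through Lemma \ref{dtw2} applied on both $\mathcal{X}$ and $\mathcal{X}^{\mathrm{ac}}$ turns them into $\mathrm{DF}_{(B,T)}(\mathcal{X}^{\mathrm{ac}},\mathcal{L}^{\mathrm{ac}})\le\mathrm{DF}_{(B,T)}(\mathcal{X},\mathcal{L})$ and $\mathrm{Ding}_{(B,T)}(\mathcal{X}^{\mathrm{ac}},\mathcal{L}^{\mathrm{ac}})-\delta J^{\mathrm{NA}}(\mathcal{X}^{\mathrm{ac}},\mathcal{L}^{\mathrm{ac}})\le\mathrm{Ding}_{(B,T)}(\mathcal{X},\mathcal{L})-\delta J^{\mathrm{NA}}(\mathcal{X},\mathcal{L})$, while the relation $-(K_{\mathcal{X}^{\mathrm{ac}}}+\mathcal{B}^{\mathrm{ac}}+(\rho^{\mathrm{ac}})^{*}T_{\mathbb{A}^1})\sim_{\mathbb{A}^1,\mathbb{Q}}\mathcal{L}^{\mathrm{ac}}$ (in the sense of the Notation preceding Definition \ref{sfp}) and the lc property come out of the construction directly.

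The step I expect to be the main obstacle is the interaction of the twist with the MMP: the program actually depends on $T_1$, so one must check (i) that a single general $T_1$ works for every model that occurs---so that $\mathcal{X}^{\mathrm{ac}}$ is canonical and the comparison of the log-twisted invariants before and after the program is legitimate---and (ii) that adding $\rho^{-1}_{*}T_{1,\mathbb{P}^1}$ does not spoil the termination and relative-abundance inputs of \cite{HX} and \cite{Has}, which is fine because $(X,B+T_1)$ is again klt (as $(X,B)$ is klt and $T_1$ is general) and $(\mathcal{X},\mathcal{B}+\rho^{-1}_{*}T_{1,\mathbb{A}^1}+\mathcal{X}_0)$ is lc by hypothesis. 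Establishing the monotonicity of $\mathrm{Ding}_{(B,T)}-\delta J^{\mathrm{NA}}$---and not merely of $\mathrm{DF}_{(B,T)}$---along the individual flips and divisorial contractions is the other delicate point, but once $T_1$ is fixed it is formally the logarithmic computation of \cite[Theorem 6.8]{Fjt}.
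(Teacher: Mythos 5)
Your proposal is correct and follows essentially the same route as the paper: choose a general $T_1\sim_{\mathbb{Q}}T$ to reduce to the logarithmic case of \cite[Theorem 6.8]{Fjt} (via a small $\mathbb{Q}$-factorialization and the Li--Xu MMP with scaling), then verify that the resulting model and the invariants are independent of the choice of $T_1$. The paper cites \cite[Lemma 2.12]{BLZ} for that last independence step where you cite \cite[Lemma 3.6.9]{BCHM}, but this is the same uniqueness-of-ample-models argument, and you correctly identify it as the one point needing care.
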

 
\begin{proof}
Fix a general effective $\mathbb{Q}$-divisor $T_1\sim_{\mathbb{Q}}T$ such that $(\mathcal{X},\mathcal{B}+\rho_*^{-1}T_{1,\mathbb{P}^1}+\mathcal{X}_0)$ is lc.
Hence $(\mathcal{X},\mathcal{B}+\rho_*^{-1}T_{1,\mathbb{P}^1})$ is klt. 
Here, we apply \cite[Corollary 1.4.3]{BCHM} to obtain a $\mathbb{G}_m$-equivariant small $\mathbb{Q}$-factorial modification $\sigma:(\mathcal{X}^0,\mathcal{B}^0+(\rho^0)_*^{-1}T_{1,\mathbb{P}^1})\to(\mathcal{X},\mathcal{B}+\rho_*^{-1}T_{1,\mathbb{P}^1})$. 
Here, we set $\rho^0:=\rho\circ\sigma$ and $\mathcal{B}^0:=\sigma^{-1}_*\mathcal{B}$.
Let $\mathcal{L}^0=\sigma^*\mathcal{L}$ and for sufficiently large $l>1$, set $\mathcal{H}^0$ as a sufficiently general effective $\mathbb{Q}$-divisor such that
\[
\mathcal{H}^0\sim_{\mathbb{Q}}\mathcal{L}^0-\frac{1}{l+1}(\mathcal{L}^0+K_{\mathcal{X}^0}+\mathcal{B}^0+(\rho^0)_*^{-1}T_{1,\mathbb{P}^1}).
\]
Then, we run a $K_{\mathcal{X}^0}+\mathcal{B}^0+(\rho^0)_*^{-1}T_{1,\mathbb{P}^1}+\mathcal{H}^0$-MMP over $\mathbb{P}^1$ with scaling of $\mathcal{H}^0$ as in the proof of \cite[Theorem 3.2]{Fjt} and obtain a certain good minimal model $(\mathcal{X}^k,\mathcal{B}^k+(\rho^k)_*^{-1}T_{1,\mathbb{P}^1}+\mathcal{H}^k)$, where $\rho^k\colon\mathcal{X}^k\dashrightarrow X_{\mathbb{P}^1}$ is canonical and $\mathcal{B}^k$ is the strict transform of $\mathcal{B}^0$. 
By \cite[Lemma 2]{LX}, we also have that $-(K_{\mathcal{X}^k}+\mathcal{B}^k+(\rho^k)_*^{-1}T_{1,\mathbb{P}^1})$ is semiample over $\mathbb{P}^1$ and set $\mathcal{X}^\mathrm{ac}:=\mathbf{Proj}_C(\oplus_{l\ge0}\pi^k_*(\mathcal{O}_{\mathcal{X}^k}(-lr_k(K_{\mathcal{X}^k}+\mathcal{B}^k+(\rho^k)_*^{-1}T_{1,\mathbb{P}^1})))$, where $r_k\in\mathbb{Z}_{>0}$ satisfies that $r_k(K_{\mathcal{X}^k}+\mathcal{B}^k+(\rho^k)_*^{-1}T_{1,\mathbb{P}^1})$ is Cartier and $\pi^k\colon\mathcal{X}^k\to\mathbb{P}^1$ is the canonical morphism.
Let $\rho^{\mathrm{ac}}\colon\mathcal{X}^{\mathrm{ac}}\dashrightarrow X_{\mathbb{P}^1}$ be the canonical birational map and $\mathcal{B}^{\mathrm{ac}}$ is the strict transform of $\mathcal{B}^k$.
Then the proof of \cite[Theorem 6.8]{Fjt} shows that if we set $\mathcal{L}^\mathrm{ac}:=-(K_{\mathcal{X}^\mathrm{ac}}+\mathcal{B}^\mathrm{ac}+(\rho^\mathrm{ac})^{-1}_*T_{1,\mathbb{P}^1})$, then 
$(\mathcal{X}^\mathrm{ac},\mathcal{B}^\mathrm{ac}+(\rho^\mathrm{ac})^{-1}_*T_{1,\mathbb{P}^1}+\mathcal{X}^\mathrm{ac}_0)$ is lc, $\mathrm{DF}_{B+T_1}(\mathcal{X}^\mathrm{ac},\mathcal{L}^\mathrm{ac})\le \mathrm{DF}_{B+T_1}(\mathcal{X},\mathcal{L})$ and 
\[
\mathrm{Ding}_{B+T_1}(\mathcal{X}^\mathrm{ac},\mathcal{L}^\mathrm{ac})-\delta\, J^{\mathrm{NA}}(\mathcal{X}^\mathrm{ac},\mathcal{L}^\mathrm{ac})\le \mathrm{Ding}_{B+T_1}(\mathcal{X},\mathcal{L})-\delta\, J^{\mathrm{NA}}(\mathcal{X},\mathcal{L})
\]
for any $\delta\in[0,1)$.

Next, take a sufficiently general effective $\mathbb{Q}$-divisor $T_2\sim_{\mathbb{Q}}T$ such that
\begin{align}
\mathrm{Ding}_{(B,T)}(\mathcal{X}^\mathrm{ac},\mathcal{L}^\mathrm{ac})&=\mathrm{Ding}_{B+T_2}(\mathcal{X}^\mathrm{ac},\mathcal{L}^\mathrm{ac}) ,\quad\mathrm{DF}_{(B,T)}(\mathcal{X}^\mathrm{ac},\mathcal{L}^\mathrm{ac})=\mathrm{DF}_{B+T_2}(\mathcal{X}^\mathrm{ac},\mathcal{L}^\mathrm{ac})\label{eq--four--equation s}\\
\mathrm{Ding}_{(B,T)}(\mathcal{X},\mathcal{L})&=\mathrm{Ding}_{B+T_2}(\mathcal{X},\mathcal{L}),\quad\mathrm{DF}_{(B,T)}(\mathcal{X},\mathcal{L})=\mathrm{DF}_{B+T_2}(\mathcal{X},\mathcal{L}).\nonumber
\end{align}
Then, we assert that the properties of $\mathcal{X}^{\mathrm{ac}}$, which we stated in the last paragraph, are also satisfied for $T_2$.
First, we check that the $K_{\mathcal{X}^0}+\mathcal{B}^0+(\rho^0)_*^{-1}T_{1,\mathbb{P}^1}+\mathcal{H}^0$-MMP over $\mathbb{P}^1$ is also a $K_{\mathcal{X}^0}+\mathcal{B}^0+(\rho^0)_*^{-1}T_{2,\mathbb{P}^1}+\mathcal{H}^0$-MMP.
Indeed, for any sufficiently general effective $\mathbb{Q}$-divisor $T_2\sim_{\mathbb{Q}}T$, 
we see that $(\rho^0)_*^{-1}T_{2,\mathbb{P}^1}\sim_{\mathbb{Q}}(\rho^0)_*^{-1}T_{1,\mathbb{P}^1}$ as \cite[Lemma 2.12]{BLZ}.
Thus, we may assume that $K_{\mathcal{X}^0}+\mathcal{B}^0+(\rho^0)_*^{-1}T_{i,\mathbb{P}^1}+\mathcal{H}^0$-minimal model programs over $\mathbb{P}^1$ with scaling of $\mathcal{H}^0$ are the same for $i=1,2$
.
Furthermore, we see that
\[
K_{\mathcal{X}^k}+\mathcal{B}^k+(\rho^k)_*^{-1}T_{1,\mathbb{P}^1}\sim_{\mathbb{Q}}K_{\mathcal{X}^k}+\mathcal{B}^k+(\rho^k)_*^{-1}T_{2,\mathbb{P}^1}.
\]
This means that $\mathcal{X}^{\mathrm{ac}}$ constructed in the previous paragraph for $T_1$ can be also constructed in the same way for $T_2$.
Hence, $(\mathcal{X}^\mathrm{ac},\mathcal{B}^\mathrm{ac}+(\rho^\mathrm{ac})^{-1}_*T_{2,\mathbb{P}^1}+\mathcal{X}^\mathrm{ac}_0)$ is lc, $-(K_{\mathcal{X}^\mathrm{ac}}+\mathcal{B}^\mathrm{ac}+(\rho^\mathrm{ac})^{-1}_*T_{2,\mathbb{P}^1})\sim_{\mathbb{P}^1,\mathbb{Q}}\mathcal{L}^\mathrm{ac}$, $\mathrm{DF}_{B+T_2}(\mathcal{X}^\mathrm{ac},\mathcal{L}^\mathrm{ac})\le \mathrm{DF}_{B+T_2}(\mathcal{X},\mathcal{L})$ and 
\[
\mathrm{Ding}_{B+T_2}(\mathcal{X}^\mathrm{ac},\mathcal{L}^\mathrm{ac})-\delta\, J^{\mathrm{NA}}(\mathcal{X}^\mathrm{ac},\mathcal{L}^\mathrm{ac})\le \mathrm{Ding}_{B+T_2}(\mathcal{X},\mathcal{L})-\delta\, J^{\mathrm{NA}}(\mathcal{X},\mathcal{L}),
\]
for any $\delta\in[0,1)$.
Combining these properties with (\ref{eq--four--equation s}), we complete the proof.
\end{proof}
 
\begin{thm}[cf., {\cite[Theorem 4]{LX}}, {\cite[Theorem 6.9]{Fjt}}]
Let $(X,B,T,L=-K_X-B-T)$ be a log-twisted Fano pair such that $T$ is semiample and $(\mathcal{X},\mathcal{L})$ be a normal ample test configuration such that $(\mathcal{X},\mathcal{B}+\rho^{-1}_*T_{0,\mathbb{P}^1}+\mathcal{X}_0)$ is lc and $-(K_{\mathcal{X}}+\mathcal{B}+\rho^{-1}_*T_{0,\mathbb{P}^1})\sim_{\mathbb{Q},\mathbb{P}^1}\mathcal{L}$ for any general effective $\mathbb{Q}$-divisor $T_0\sim_{\mathbb{Q}}T$, where $\rho\colon\mathcal{X}\dashrightarrow X_{\mathbb{P}^1}$ is the canonical birational map and $\mathcal{B}=\rho^{-1}_*B_{\mathbb{P}^1}$. 

Then there exist $d\in\mathbb{Z}_{>0}$ and a log-twisted special test configuration $(\mathcal{X}^\mathrm{s},\mathcal{L}^\mathrm{s}=-(K_{\mathcal{X}^\mathrm{s}}+\mathcal{B}^\mathrm{s}+(\rho^\mathrm{s})^{-1}_*T_{1,\mathbb{P}^1}))$ for $(X,B,T,L)$ such that 
\[
\mathrm{Ding}_{(B,T)}(\mathcal{X}^\mathrm{s},\mathcal{L}^\mathrm{s})-\delta\, J^{\mathrm{NA}}(\mathcal{X}^\mathrm{s},\mathcal{L}^\mathrm{s})\le d\, (\mathrm{Ding}_{(B,T)}(\mathcal{X},\mathcal{L})-\delta\, J^{\mathrm{NA}}(\mathcal{X},\mathcal{L}))
\]
for any $\delta\in[0,1)$ and general effective $\mathbb{Q}$-divisor $T_1\sim_{\mathbb{Q}}T$, where $\rho^{\mathrm{s}}\colon\mathcal{X}^\mathrm{s}\dashrightarrow X_{\mathbb{P}^1}$ is the canonical birational map and $\mathcal{B}^{\mathrm{s}}=(\rho^\mathrm{s})^{-1}_*B_{\mathbb{P}^1}$.

\end{thm}
 
\begin{proof}
Note that \begin{align*}
    \mathrm{Ding}_{(B,T)}(\mathcal{X}^\mathrm{s},\mathcal{L}^\mathrm{s})&= \mathrm{DF}_{(B,T)}(\mathcal{X}^\mathrm{s},\mathcal{L}^\mathrm{s}),\quad\text{and}\\
    \mathrm{DF}_{(B,T)}(\mathcal{X},\mathcal{L})&=\mathrm{Ding}_{(B,T)}(\mathcal{X},\mathcal{L}) 
\end{align*}
hold now due to Proposition \ref{fjtprop} (3).
Thus, it suffices to consider the Donaldson-Futaki invariant instead of the Ding invariant.
Recall the proof of \cite[Theorems 6.9 and 6.10]{Fjt}. 
We first take a $\mathbb{G}_m$-invariant semistable reduction $\mathcal{Y}$ of $(\mathcal{X},\mathcal{B}+\mathcal{X}_0)$. 
More precisely, there exist $d\in\mathbb{Z}_{>0}$ and a $\mathbb{G}_m$-equivariant birational morphism $g:\mathcal{Y}\to\mathcal{X}^{(d)}$ such that $\mathcal{Y}$ is smooth, $\mathcal{Y}_0+\mathrm{Ex}(g)+g^{-1}_*\mathcal{B}^{(d)}$ is snc and $\mathcal{Y}_0$ is reduced. We may assume that there exists a $\mathbb{G}_m$-equivariant birational morphism $\rho_{\mathcal{Y}}:\mathcal{Y}\to X_{\mathbb{P}^1}$ such that $\rho_{\mathcal{Y}}$ coincides with $g$ over $\mathbb{P}^1\setminus\{0\}$. 
Fix a general effective divisor $T_1\sim_{\mathbb{Q}}T$ such that
\[
\mathrm{DF}_{B+T_1}(\mathcal{X},\mathcal{L})=\mathrm{DF}_{(B,T)}(\mathcal{X},\mathcal{L}),
\]
$\mathcal{Y}_0+\mathrm{Ex}(g)+g^{-1}_*\mathcal{B}^{(d)}+\rho_{\mathcal{Y}}^*T_{1,\mathbb{P}^1}$ is also snc and the support of $\rho_{\mathcal{Y}}^*T_{1,\mathbb{P}^1}$ contains no component of $\mathcal{Y}_0$. Then, we can take a special test configuration $(\mathcal{X}^{\mathrm{s}},\mathcal{L}^{\mathrm{s}})$ for the log Fano pair $(X,B+T_1)$ as \cite[Theorem 6.10]{Fjt} by running some MMP of $\mathcal{Y}$.
In particular, the birational map $\mathcal{Y}\dashrightarrow\mathcal{X}^{\mathrm{s}}$ is a birational contraction.
This means that $T_1$ does not contain the center of $v_{\mathcal{X}^{\mathrm{s}}_0}$ and we can conclude that 
\[
\mathrm{DF}_{(B,T)}(\mathcal{X}^{\mathrm{s}},\mathcal{L}^{\mathrm{s}})=\mathrm{DF}_{B+T_1}(\mathcal{X}^{\mathrm{s}},\mathcal{L}^{\mathrm{s}})
\]
by Lemma \ref{dtw}. 
On the other hand, it follows from the proof of \cite[Theorem 6.9]{Fjt} that
\[
\mathrm{DF}_{B+T_1}(\mathcal{X}^\mathrm{s},\mathcal{L}^\mathrm{s})-\delta\, J^{\mathrm{NA}}(\mathcal{X}^\mathrm{s},\mathcal{L}^\mathrm{s})\le d\, (\mathrm{DF}_{B+T_1}(\mathcal{X},\mathcal{L})-\delta\, J^{\mathrm{NA}}(\mathcal{X},\mathcal{L})).
\]
Thus the assertion holds.
\end{proof}
 
\begin{cor}[cf., {\cite[Corollary 1]{LX}}, {\cite[Corollary 6.11]{Fjt}}]\label{6.11}
Let $(X,B,T,L=-K_X-B-T)$ be a log-twisted Fano pair such that 
$T$ is semiample. Fix $\delta\in [0,1)$. Then, it holds that $\mathrm{Ding}_{(B,T)}(\mathcal{X},\mathcal{L})\ge \delta \,J^{\mathrm{NA}}(\mathcal{X},\mathcal{L})$ for any semiample test configuration if and only if the inequality holds for any log-twisted special test configuration 
for $(X,B,T,L)$.

Furthermore, $(X,B,T,L)$ is log-twisted (uniformly) K-(semi)stable if and only if $(X,B,T,L)$ is log-twisted (uniformly) Ding-(semi)stable.
 
\end{cor}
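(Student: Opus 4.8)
The plan is to prove Corollary \ref{6.11} by reducing the general semiample test configuration case to the special test configuration case, exactly paralleling the logarithmic (untwisted) arguments of Li-Xu \cite{LX} and Fujita \cite{Fjt}, but now carrying the moduli-type twist $T$ along via the ``general member'' device introduced in Definition \ref{detw} and Lemma \ref{dtw2}. First I would take an arbitrary normal semiample test configuration $(\mathcal{X},\mathcal{L})$ for $(X,L)$ with $\mathrm{Ding}_{(B,T)}(\mathcal{X},\mathcal{L})-\delta\,J^{\mathrm{NA}}(\mathcal{X},\mathcal{L})$ achieving (or nearly achieving) the infimum, and observe that by Proposition \ref{fjtprop} (1)--(2) I may pass to a test configuration dominating $X_{\mathbb{A}^1}$ and, after a base change of order $d$, assume $\mathcal{L}$ is ample (replacing $\mathrm{Ding}$ and $J^{\mathrm{NA}}$ by $d$ times their original values, which does not affect the sign of the relevant inequality). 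Then I would apply, in sequence, the three structural theorems just proved: the lc-modification theorem (producing $(\mathcal{X}^{\mathrm{lc}},\mathcal{L}^{\mathrm{lc}})$ with $(\mathcal{X}^{\mathrm{lc}},\mathcal{B}^{\mathrm{lc}}+(\rho^{\mathrm{lc}})^*T_{\mathbb{A}^1}+\mathcal{X}^{\mathrm{lc}}_0)$ lc and both the log-twisted Donaldson-Futaki and the $\mathrm{Ding}-\delta J$ functionals non-increasing), then Theorem \ref{6.8} (replacing $\mathcal{L}^{\mathrm{lc}}$ by an anticanonically-scaled $\mathcal{L}^{\mathrm{ac}}$), then the special-test-configuration theorem (producing $(\mathcal{X}^{\mathrm{s}},\mathcal{L}^{\mathrm{s}}=-(K_{\mathcal{X}^{\mathrm{s}}}+\mathcal{B}^{\mathrm{s}}+(\rho^{\mathrm{s}})^*T_{\mathbb{A}^1}))$). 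At each stage the functional $\mathrm{Ding}_{(B,T)}-\delta J^{\mathrm{NA}}$ only decreases, so if it is nonnegative on all log-twisted special test configurations then it is nonnegative on $(\mathcal{X},\mathcal{L})$; the converse direction is trivial since special test configurations form a subclass.

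Next I would deduce the consequences in the second paragraph of the corollary. For the equivalence of log-twisted Ding-stability and K-stability: on a log-twisted special test configuration $(\mathcal{X}^{\mathrm{s}},\mathcal{L}^{\mathrm{s}})$ one has $\mathrm{Ding}_{(B,T)}(\mathcal{X}^{\mathrm{s}},\mathcal{L}^{\mathrm{s}})=\mathrm{DF}_{(B,T)}(\mathcal{X}^{\mathrm{s}},\mathcal{L}^{\mathrm{s}})$ by Proposition \ref{fjtprop} (3) (since the defining plt condition forces the relevant pair to be sublc), while Proposition \ref{fjtprop} (3) also gives $\mathrm{Ding}_{(B,T)}\le\mathrm{DF}_{(B,T)}$ on arbitrary normal semiample test configurations. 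Combining these two facts with the first part of the corollary (applied with $\delta=0$, and then with $\delta>0$ after rescaling $J^{\mathrm{NA}}$ by the norm-equivalence $I^{\mathrm{NA}}-J^{\mathrm{NA}}\sim J^{\mathrm{NA}}$ of \cite[Proposition 7.8]{BHJ}, cf. Corollary \ref{lastcoro}) yields: Ding-semistability $\Rightarrow$ Ding-$\ge\delta J$ on special configs for $\delta=0$ $\Rightarrow$ DF-$\ge0$ on special configs $\Rightarrow$ K-semistability on special configs $\Rightarrow$ $\mathrm{DF}_{(B,T)}\ge\mathrm{Ding}_{(B,T)}\ge0$ on all configs, and symmetrically in the other direction; the uniform and strict cases are handled identically by tracking the constant $\delta$ and using $I^{\mathrm{NA}}$–$J^{\mathrm{NA}}$ equivalence.

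The main obstacle I expect is bookkeeping the ``general member $T_1\sim_{\mathbb{Q}}T$'' through the minimal model program steps: at each of the three reduction theorems the chosen $T_1$ must simultaneously (i) be general enough that $\mathrm{Ding}_{(B,T)}$ and $\mathrm{DF}_{(B,T)}$ equal their logarithmic counterparts $\mathrm{Ding}_{B+T_1}$, $\mathrm{DF}_{B+T_1}$ on every test configuration in play, and (ii) have its strict transform unaffected by the birational contractions produced by the MMP, so that the outputs $\mathcal{X}^{\mathrm{lc}},\mathcal{X}^{\mathrm{ac}},\mathcal{X}^{\mathrm{s}}$ are genuinely independent of $T_1$. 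This independence is precisely what was established inside the proofs of the three theorems (via \cite[Lemma 3.6.9]{BCHM}, \cite[Corollary 1.4.3]{BCHM} and the uniqueness of lc/ample models), so the work here is to invoke those independence statements carefully and in the right order; once that is done, the corollary follows formally from the logarithmic case \cite[Corollary 6.11]{Fjt} combined with Proposition \ref{fjtprop}. I do not anticipate any genuinely new geometric input beyond what the preceding theorems supply.
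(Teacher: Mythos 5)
Your proposal is correct and is exactly the argument the paper intends: the corollary is stated without a printed proof because it follows formally from the three preceding reduction theorems (lc modification, anticanonical rescaling via Theorem \ref{6.8}, and the special test configuration theorem), each of which does not increase $\mathrm{Ding}_{(B,T)}-\delta\,J^{\mathrm{NA}}$, combined with Proposition \ref{fjtprop} (3) giving $\mathrm{Ding}_{(B,T)}\le\mathrm{DF}_{(B,T)}$ with equality on special test configurations. Your handling of the general member $T_1\sim_{\mathbb{Q}}T$ and the norm equivalence for the uniform case matches the paper's setup.
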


 On the other hand, we obtain the following.
 
 
 
\begin{thm}[cf., {\cite[Theorem 6.13]{Fjt}}]\label{6.13}
Let $(X,B,T,L=-K_X-B-T)$ be a log-twisted Fano pair such that $T$ is semiample and let $(\mathcal{X},\mathcal{L}:=-(K_{\mathcal{X}}+\mathcal{B}+\rho^{-1}_*T_{1,\mathbb{P}^1}))$ be a normal ample test configuration for $(X,L)$ with $\mathcal{X}_0$ irreducible and reduced, 
where $T_{1}\in|T|_{\mathbb{Q}}$ is general and $\rho\colon\mathcal{X}\dashrightarrow X_{\mathbb{P}^1}$ is the canonical birational map.
Then the divisorial valuation $v_{\mathcal{X}_0}$ on $X$ satisfies that
\begin{align*}
\mathrm{DF}_{(B,T)}(\mathcal{X},\mathcal{L})&=\frac{\beta_{(X,B,T)}(v_{\mathcal{X}_0})}{\mathrm{vol}(L)},\\
J^\mathrm{NA}(\mathcal{X},\mathcal{L})&=\frac{j_{L}(v_{\mathcal{X}_0})}{\mathrm{vol}(L)}.
\end{align*}
\end{thm}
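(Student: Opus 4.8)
The statement to prove is Theorem \ref{6.13}, which asserts that for a log-twisted Fano pair $(X,B,T,L=-K_X-B-T)$ with $(X,B)$ klt and $T$ semiample, a normal ample test configuration $(\mathcal{X},\mathcal{L}=-(K_{\mathcal{X}}+\mathcal{B}+\rho^*T_{\mathbb{A}^1}))$ with irreducible reduced central fiber gives a dreamy valuation $v_{\mathcal{X}_0}$ and the log-twisted Donaldson-Futaki and $J^{\mathrm{NA}}$ invariants equal $\beta_{(X,B,T)}(v_{\mathcal{X}_0})/\mathrm{vol}(L)$ and $j_{(X,B,T)}(v_{\mathcal{X}_0})/\mathrm{vol}(L)$ respectively. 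This is the log-twisted analogue of Fujita's \cite[Theorem 6.13]{Fjt}, and as with the preceding theorems in this subsection, the plan is to reduce to the logarithmic (untwisted) case \cite[Theorem 6.13]{Fjt} by specializing $T$ to a general $\mathbb{Q}$-divisor $T_1 \sim_{\mathbb{Q}} T$.

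Here is how I would proceed. First I would observe that since $(\mathcal{X},\mathcal{L})$ is a special-type test configuration with $\mathcal{L}\sim_{\mathbb{A}^1,\mathbb{Q}} -(K_{\mathcal{X}}+\mathcal{B}+\rho^*T_{\mathbb{A}^1})$ and $\mathcal{X}_0$ irreducible, the pair $(\mathcal{X},\mathcal{B}+\rho^*T_{\mathbb{A}^1}+\mathcal{X}_0)$ is (by the hypotheses implicit in the Ding-type framework of this subsection) plt, so we may choose a general effective $T_1\sim_{\mathbb{Q}}T$ such that the support of $\rho_*^{-1}(T_1)_{\mathbb{A}^1}$ contains no center of $v_{\mathcal{X}_0}$ and such that $(X,B+T_1)$ is klt; then $(\mathcal{X},\mathcal{B}+\rho_*^{-1}(T_1)_{\mathbb{A}^1})$ is a normal ample test configuration for the log Fano pair $(X,B+T_1)$ with $\mathcal{L}\sim_{\mathbb{Q}}-(K_{\mathcal{X}}+\mathcal{B}+\rho_*^{-1}(T_1)_{\mathbb{P}^1})$. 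By Lemma \ref{dtw} we have $\mathrm{DF}_{(B,T)}(\mathcal{X},\mathcal{L})=\mathrm{DF}_{B+T_1}(\mathcal{X},\mathcal{L})$; the $J^{\mathrm{NA}}$ functional does not involve $B$ or $T$ at all, so $J^{\mathrm{NA}}(\mathcal{X},\mathcal{L})$ is unchanged. Applying Fujita's \cite[Theorem 6.13]{Fjt} to $(X,B+T_1,-(K_X+B+T_1))$ then gives that $v_{\mathcal{X}_0}$ is dreamy with respect to $L=-(K_X+B+T_1)=-(K_X+B+T)$ (the divisor class is the same) and that $\mathrm{DF}_{B+T_1}(\mathcal{X},\mathcal{L})=\beta_{(X,B+T_1)}(v_{\mathcal{X}_0})/\mathrm{vol}(L)$ and $J^{\mathrm{NA}}(\mathcal{X},\mathcal{L})=j_{(X,B+T_1)}(v_{\mathcal{X}_0})/\mathrm{vol}(L)$. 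Finally I would invoke Lemma \ref{dtw2} — or rather, argue directly from the definitions of $\beta_{(X,B,T)}$ and $j_{(X,B,T)}$ in Definition \ref{detw} together with the fact that $A_{(X,B)}(F)=A_{(X,B+T_1)}(F)$ for the general $T_1$ chosen so that $F$'s center is not in $\mathrm{Supp}\,T_1$ — to conclude $\beta_{(X,B,T)}(v_{\mathcal{X}_0})=\beta_{(X,B+T_1)}(v_{\mathcal{X}_0})$ and likewise for $j$. Note $j$ depends only on $L$ and $F$, so $j_{(X,B,T)}=j_{(X,B+T_1)}$ is immediate; the content is the $\beta$ comparison, which follows since $\mathrm{vol}(L-xF)$ is computed from $L$ alone and the log discrepancy agrees.

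The dreamy conclusion itself deserves a word: dreaminess of $v_{\mathcal{X}_0}$ is a statement purely about finite generation of $\bigoplus_{k,j} H^0(Y,kr\sigma^*L-jF)$, which depends only on the $\mathbb{Q}$-line bundle class $L$ and the divisor $F$, not on the boundary; so once Fujita's theorem yields dreaminess in the $(X,B+T_1)$ setting with the same $L$, we are done with no further work. The main obstacle, as in the companion theorems, is bookkeeping: ensuring that a single general $T_1$ can simultaneously be made to (i) avoid the center of $v_{\mathcal{X}_0}$, (ii) make $(X,B+T_1)$ klt, and (iii) be compatible with Lemma \ref{dtw}'s hypothesis that $\mathrm{Supp}\,\rho_*^{-1}(T_1)_{\mathbb{A}^1}$ contains no center of any divisor $E\subset\mathcal{X}_0$ — here there is only one such $E=\mathcal{X}_0$, so this is painless — and verifying that the normalization and ampleness hypotheses transport correctly. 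I expect no essentially new difficulty beyond what was already handled in Theorems \ref{6.8} and \ref{6.12}; the entire argument is a dictionary translation, and the only genuinely careful point is that $\beta_{(X,B,T)}$ and $j_{(X,B,T)}$ were \emph{defined} in Definition \ref{detw} precisely so as to agree with the log versions for general $T_1$, making the identification tautological once Fujita's theorem is cited.
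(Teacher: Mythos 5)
Your proposal is correct and takes essentially the same route as the paper: one reduces to Fujita's logarithmic Theorem 6.13 by choosing a general $T_1\sim_{\mathbb{Q}}T$ whose support avoids the center of $v_{\mathcal{X}_0}$, so that $\mathrm{DF}_{(B,T)}(\mathcal{X},\mathcal{L})=\mathrm{DF}_{B+T_1}(\mathcal{X},\mathcal{L})$ and $\beta_{(X,B,T)}(v_{\mathcal{X}_0})=\beta_{(X,B+T_1)}(v_{\mathcal{X}_0})$ while $J^{\mathrm{NA}}$ and $j$ depend only on $L$. The paper's proof is precisely this two-line reduction, and your extra bookkeeping about the single component $E=\mathcal{X}_0$ and the dreaminess being a statement about $L$ and $F$ alone is consistent with it.
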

 
\begin{proof}
For general effective $\mathbb{Q}$-divisor $T_1\sim_{\mathbb{Q}}T$, we have
\begin{align*}
\beta_{(X,B,T)}(v_{\mathcal{X}_0})&=\beta_{(X,B)}(v_{\mathcal{X}_0}),\\
\mathrm{DF}_{(B,T)}(\mathcal{X},\mathcal{L})&=\mathrm{DF}_{(B+T_1)}(\mathcal{X},\mathcal{L}).
\end{align*}
Therefore, this theorem immediately follows from \cite[Theorem 6.13]{Fjt}.
\end{proof}

Thus, we obtain the following.
  
\begin{thm}[cf., {\cite[Theorem 6.5]{Fjt}}]\label{6.5}
Let $\delta\in[0,1)$ and $(X,B,T,L=-K_X-B-T)$ be a log-twisted Fano pair such that $T$ is semiample. Then the following are equivalent,
\begin{enumerate}[(i)]
\item $\mathrm{DF}_{(B,T)}(\mathcal{X},\mathcal{L})\ge \delta J^\mathrm{NA}(\mathcal{X},\mathcal{L})$ for any normal semiample test configuration $(\mathcal{X},\mathcal{L})$ for $(X,L)$,
\item $\beta_{(X,B,T)}(F)\ge\delta j_{L}(F)$ for any prime divisor over $X$,
\end{enumerate}
\end{thm}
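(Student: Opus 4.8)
The plan is to prove the cycle of implications $(\mathrm{i})\Rightarrow(\mathrm{ii})\Rightarrow(\mathrm{iii})\Rightarrow(\mathrm{i})$, essentially by threading together the MMP reductions and valuative computations established above. All of the genuinely hard birational and analytic content is already contained in Theorems \ref{6.12}, \ref{6.13}, Corollary \ref{6.11} and Proposition \ref{fjtprop}, so the remaining task is chiefly to chain the right inequalities in the right direction.

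For $(\mathrm{i})\Rightarrow(\mathrm{ii})$ I would first note that on any log-twisted special test configuration $(\mathcal{X},\mathcal{L})$ one has $\mathrm{Ding}_{(B,T)}(\mathcal{X},\mathcal{L})=\mathrm{DF}_{(B,T)}(\mathcal{X},\mathcal{L})$ by Proposition \ref{fjtprop}(3): the boundary $\mathcal{B}^{\mathrm{s}}+(\rho^{\mathrm{s}})^*T_{\mathbb{A}^1}+\mathcal{X}_0$ is plt hence sublc, and $\mathcal{L}\sim_{\mathbb{Q}}-K_{(\mathcal{X},\mathcal{B}^{\mathrm{s}}+(\rho^{\mathrm{s}})^{-1}_*T_{1,\mathbb{A}^1})}$ for general $T_1\sim_{\mathbb{Q}}T$ by Definition \ref{sfp}. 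Thus $(\mathrm{i})$ gives $\mathrm{Ding}_{(B,T)}(\mathcal{X},\mathcal{L})\ge\delta J^{\mathrm{NA}}(\mathcal{X},\mathcal{L})$ for every log-twisted special test configuration; by Corollary \ref{6.11} this upgrades to $\mathrm{Ding}_{(B,T)}(\mathcal{X},\mathcal{L})\ge\delta J^{\mathrm{NA}}(\mathcal{X},\mathcal{L})$ for every normal semiample test configuration, and then Theorem \ref{6.12} yields $\beta_{(X,B,T)}(F)\ge\delta\, j_{(X,B,T)}(F)$ for every prime divisor $F$ over $X$, which is $(\mathrm{ii})$. The implication $(\mathrm{ii})\Rightarrow(\mathrm{iii})$ is immediate, since dreamy prime divisors over $X$ form a subclass of all prime divisors over $X$.

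For $(\mathrm{iii})\Rightarrow(\mathrm{i})$, let $(\mathcal{X},\mathcal{L})$ be any log-twisted special test configuration for $(X,B,T,L)$. Since $(\mathcal{X},\mathcal{B}^{\mathrm{s}}+(\rho^{\mathrm{s}})^*T_{\mathbb{A}^1}+\mathcal{X}_0)$ is plt and $(X,B)$ is klt (so the coefficients of $\mathcal{B}^{\mathrm{s}}$ and of a general $T_1\sim_{\mathbb{Q}}T$ are $<1$), the floor of the boundary is $\mathcal{X}_0$, which is therefore normal; being also connected it is irreducible and reduced. Moreover $\mathcal{L}$ is ample and $\mathcal{L}\sim_{\mathbb{Q}}-(K_{\mathcal{X}}+\mathcal{B}^{\mathrm{s}}+(\rho^{\mathrm{s}})^*T_{\mathbb{P}^1})$, so Theorem \ref{6.13} applies: $v_{\mathcal{X}_0}$ is a dreamy valuation and $\mathrm{DF}_{(B,T)}(\mathcal{X},\mathcal{L})=\beta_{(X,B,T)}(v_{\mathcal{X}_0})/\mathrm{vol}(L)$, $J^{\mathrm{NA}}(\mathcal{X},\mathcal{L})=j_{(X,B,T)}(v_{\mathcal{X}_0})/\mathrm{vol}(L)$. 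Feeding the dreamy divisor $v_{\mathcal{X}_0}$ into $(\mathrm{iii})$ and using $\mathrm{Ding}_{(B,T)}=\mathrm{DF}_{(B,T)}$ on special test configurations, we obtain $\mathrm{Ding}_{(B,T)}(\mathcal{X},\mathcal{L})\ge\delta J^{\mathrm{NA}}(\mathcal{X},\mathcal{L})$ for every log-twisted special test configuration; Corollary \ref{6.11} then gives the same inequality for every normal semiample test configuration, and finally $\mathrm{DF}_{(B,T)}(\mathcal{X},\mathcal{L})\ge\mathrm{Ding}_{(B,T)}(\mathcal{X},\mathcal{L})\ge\delta J^{\mathrm{NA}}(\mathcal{X},\mathcal{L})$ by Proposition \ref{fjtprop}(3), which is $(\mathrm{i})$.

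The only point I expect to require real care — there being no deep step left — is the bookkeeping of the inequality $\mathrm{Ding}_{(B,T)}\le\mathrm{DF}_{(B,T)}$: it must be invoked to pass from $(\mathrm{i})$ down to the Ding inequality on special test configurations (harmless there, since the two functionals coincide) and again at the end of $(\mathrm{iii})\Rightarrow(\mathrm{i})$ to pass from the Ding inequality back up to $\mathrm{DF}$ on arbitrary test configurations, because Corollary \ref{6.11} and the underlying MMP-reduction theorems are only available in the Ding formulation, so all content quantified over \emph{all} test configurations has to be routed through $\mathrm{Ding}$ and only converted to $\mathrm{DF}$ at the two endpoints. One should also verify the minor compatibility facts that every log-twisted special test configuration is a normal semiample test configuration (plt pairs are normal and $\mathcal{L}$ is ample), so that Corollary \ref{6.11}, Theorems \ref{6.12} and \ref{6.13} genuinely apply to it, and that the $\sim_{\mathbb{Q}}$-relation over $\mathbb{A}^1$ versus over $\mathbb{P}^1$ appearing in Theorem \ref{6.13} is absorbed by the canonical compactification as usual.
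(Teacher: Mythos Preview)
Your proof is correct and follows exactly the approach the paper intends: since Theorem~\ref{6.5} is stated without proof and marked ``cf.~\cite[Theorem 6.5]{Fjt}'', the reader is expected to chain together Corollary~\ref{6.11}, Theorem~\ref{6.12}, Theorem~\ref{6.13}, and Proposition~\ref{fjtprop}(3) precisely as you do. Your observation that the $\delta J^{\mathrm{NA}}$-term forces the argument to be routed through $\mathrm{Ding}$ (because the MMP reductions only control $\mathrm{Ding}-\delta J^{\mathrm{NA}}$, not $\mathrm{DF}-\delta J^{\mathrm{NA}}$) is exactly the point, and your verification that plt forces $\mathcal{X}_0$ irreducible and reduced so that Theorem~\ref{6.13} applies is the only nontrivial compatibility check needed.
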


\begin{proof}
It follows from Corollary \ref{6.11} and Theorems \ref{6.12} and \ref{6.13}.
\end{proof}
 
\begin{cor}[cf., {\cite[Theorem 3]{Li2}}]\label{appc}
Let $(\mathbb{P}^1,\Delta,T)$ be a log-twisted Fano pair with $\Delta=\sum_{i=1}^m a_ip_i$ and a semiample $\mathbb{Q}$-line bundle $T$, where $a_i>0$, $m\ge 0$ and $p_i$ are distinct closed points of $\mathbb{P}^1$.
Then, the log-twisted Fano pair $(\mathbb{P}^1,\Delta,T,-K_{\mathbb{P}^1}-\Delta-T)$ is uniformly K-stable (resp., K-semistable) if and only if
\[
\max_{1\le i\le m}\{a_i\}<\frac{\mathrm{deg}(\Delta+T)}{2}\, \left(\mathrm{resp}. \, \max\{a_i\}\le\frac{\mathrm{deg}(\Delta+T)}{2}\right).
\]
\end{cor}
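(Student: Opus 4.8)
The plan is to reduce the statement to an explicit computation of the $\delta$-invariant $\delta_{(\mathbb{P}^1,\Delta)}(L)$, where $L=-(K_{\mathbb{P}^1}+\Delta+T)$, and then to feed this into the criteria of Appendix \ref{appendices}. Since $T$ is semiample, Corollary \ref{6.11} identifies log-twisted (uniform) K-(semi)stability of $(\mathbb{P}^1,\Delta,T,L)$ with log-twisted (uniform) Ding-(semi)stability; by Corollary \ref{lastcoro} the uniform version of the latter is equivalent to $\delta_{(\mathbb{P}^1,\Delta)}(L)>1$, and by Theorem \ref{bhjz} (applied with $T=-(K_{\mathbb{P}^1}+\Delta+L)$, which agrees with the given $T$ because $L=-(K_{\mathbb{P}^1}+\Delta+T)$) Ding-semistability is equivalent to $\delta_{(\mathbb{P}^1,\Delta)}(L)\ge1$. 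Before this I would dispose of the degenerate cases. If some $a_i\ge 1$ then $(\mathbb{P}^1,\Delta)$ is not klt, so $(\mathbb{P}^1,\Delta,T,L)$ is not K-semistable, since a K-semistable log-twisted Fano pair has klt underlying pair by the results of Appendix \ref{appendices}; on the other hand the Fano condition forces $\mathrm{deg}(\Delta+T)<2$, so $\tfrac12\mathrm{deg}(\Delta+T)<1\le a_i\le\max_j a_j$, whence the asserted inequality also fails. Hence I may assume all $a_i<1$, so $(\mathbb{P}^1,\Delta,L)$ is a polarized klt pair as required by the cited results, and $m\ge 1$ (the case $m=0$ being handled by the same computation with $\max_{1\le i\le m}a_i$ read as $0$).

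The core of the argument is the computation $\delta_{(\mathbb{P}^1,\Delta)}(L)=\inf_v A_{(\mathbb{P}^1,\Delta)}(v)/S_L(v)$, the infimum over divisorial valuations on $\mathbb{P}^1$. Because $\mathbb{P}^1$ is the unique smooth projective curve in its birational class, every such valuation is $\mathrm{ord}_p$ for a closed point $p\in\mathbb{P}^1$. For any such $p$ one has $\mathrm{vol}(L-x\,\mathrm{ord}_p)=\mathrm{deg}\,L-x$ for $0\le x\le\mathrm{deg}\,L$ and $0$ thereafter, so $S_L(\mathrm{ord}_p)=(\mathrm{deg}\,L)^{-1}\int_0^{\mathrm{deg}\,L}(\mathrm{deg}\,L-x)\,dx=\tfrac12\mathrm{deg}\,L$, independently of $p$. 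Since $A_{(\mathbb{P}^1,\Delta)}(\mathrm{ord}_{p_i})=1-a_i$ and $A_{(\mathbb{P}^1,\Delta)}(\mathrm{ord}_p)=1$ for $p\notin\{p_1,\dots,p_m\}$, the infimum is attained among the $p_i$ and
\[
\delta_{(\mathbb{P}^1,\Delta)}(L)=\frac{2\bigl(1-\max_{1\le i\le m}a_i\bigr)}{\mathrm{deg}\,L}.
\]

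It then remains to translate this into the claimed numerical condition. As $\mathrm{deg}\,L=\mathrm{deg}\bigl(-(K_{\mathbb{P}^1}+\Delta+T)\bigr)=2-\mathrm{deg}(\Delta+T)$, we have $\tfrac12\mathrm{deg}\,L=1-\tfrac12\mathrm{deg}(\Delta+T)$, so $\delta_{(\mathbb{P}^1,\Delta)}(L)\ge 1$ is equivalent to $1-\max_i a_i\ge 1-\tfrac12\mathrm{deg}(\Delta+T)$, that is $\max_i a_i\le\tfrac12\mathrm{deg}(\Delta+T)$, and similarly $\delta_{(\mathbb{P}^1,\Delta)}(L)>1$ is equivalent to $\max_i a_i<\tfrac12\mathrm{deg}(\Delta+T)$. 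Combining with the chain of equivalences from the first paragraph gives the two stated assertions.

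I do not expect a genuine obstacle: the corollary is essentially an assembly of the appendix's machinery together with an elementary volume computation on a curve. The only points requiring care are (i) that divisorial valuations on $\mathbb{P}^1$ are exactly the $\mathrm{ord}_p$, so that the $\delta$-invariant is governed by the $p_i$; (ii) the non-klt and $m=0$ boundary cases treated above; and (iii) verifying that the hypotheses of Theorem \ref{bhjz}, Corollary \ref{lastcoro} and Corollary \ref{6.11} are simultaneously met — in particular the semiampleness of $T$, which is assumed and is needed for Corollary \ref{6.11}, though not for the other two.
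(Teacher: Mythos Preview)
Your argument is correct. The paper's own proof is a single line deferring to the logarithmic case \cite[Example 6.6]{Fjt}, which amounts to applying the valuative criterion of Theorem \ref{6.5} and computing $\beta_{(\mathbb{P}^1,\Delta,T)}(\mathrm{ord}_p)$; you instead invoke Theorem \ref{bhjz} and Corollary \ref{lastcoro} and compute $\delta_{(\mathbb{P}^1,\Delta)}(L)$ directly. Since on $\mathbb{P}^1$ both criteria reduce to exactly the same comparison of $A_{(\mathbb{P}^1,\Delta)}(\mathrm{ord}_p)=1-\mathrm{mult}_p\Delta$ with $S_L(\mathrm{ord}_p)=\tfrac12\deg L$, the two routes are equivalent reformulations of one another; your version has the modest advantage of being self-contained rather than citing an external example, and your explicit handling of the non-klt and $m=0$ boundary cases is a welcome addition.
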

 
\begin{proof}
This immediately follows from Theorem \ref{6.5} as \cite[Example 6.6]{Fjt}.
\end{proof}

Now, we are ready to prove Theorem \ref{appendix}. 

\begin{proof}[Proof of Theorem \ref{appendix}]
By assumption, Corollary \ref{6.11}, Theorem \ref{6.13} and Theorem \ref{6.5}, there exists a prime divisor $F$ over $X$ such that $\beta_{(X,B,T)}(F)=0$.
Here, we set 
\[
\alpha_F:=\inf_{D\in|L|_{\mathbb{Q}}}\frac{A_{(X,B)}(F)}{\mathrm{ord}_F(D)}\ge0.
\]
For any rational positive number $0<\tau<T_L(F)$, we can take $k\in \mathbb{Z}_{>0}$ such that $H^0(X,kL-k\tau F)\ne 0$. Then there exists an effective $\mathbb{Q}$-Cartier $\mathbb{Q}$-divisor $D\sim_{\mathbb{Q}}L$ such that $\mathrm{ord}_F(D)\ge\tau$ and hence we obtain
\[
\frac{A_{(X,B)}(F)}{\tau}\ge \alpha_F.
\]
Therefore, we obtain \begin{equation}\label{eq-alphaf}
\alpha_FT_{L}(F)\le A_{(X,B)}(F).
\end{equation}

Next, we have by $\beta_{(X,B,T)}(F)=0$ that
\[
A_{(X,B)}(F)\mathrm{vol}(L)=\int^{T_{L}(F)}_0\mathrm{vol}(L-xF)dx.
\]
By (\ref{eq-alphaf}), this means that 
\[
\int^{T_{L}(F)}_0(\alpha_F\mathrm{vol}(L)-\mathrm{vol}(L-xF))dx\le0
\]
 and hence $\alpha_F<1$.
 Thus, there exists a $\mathbb{Q}$-Cartier $\mathbb{Q}$-divisor $D\in |L|_{\mathbb{Q}}$ such that $A_{(X,B)}(F)<{\mathrm{ord}_F(D)}$. 
Let
\[
\beta_+:=A_{(X,B)}(F)\mathrm{vol}(L)=\int^{T_{L}(F)}_0\mathrm{vol}(L-xF)dx>0.
\]
Then it holds that
\begin{align*}
\beta_{(X,B+\epsilon D,T)}(F)&=A_{(X,B+\epsilon D)}(F)\mathrm{vol}((1-\epsilon)L)-\int^{(1-\epsilon)T_{L}(F)}_0\mathrm{vol}((1-\epsilon)L-xF)dx\\
&=(1-\epsilon)^n(A_{(X,B)}(F)-\epsilon\,\mathrm{ord}_F(D))\mathrm{vol}(L)-(1-\epsilon)^{n+1}\int^{T_{L}(F)}_0\mathrm{vol}(L-xF)dx\\
&=\beta_+\epsilon(1-\epsilon)^n\left(1-\frac{\mathrm{ord}_F(D)}{A_{(X,B)}(F)}\right)<0
\end{align*}
for sufficiently small $\epsilon>0$.
Thus the assertion follows from Theorem \ref{6.5}.
\end{proof}

\end{document}